\definecolor{dkgreen}{rgb}{0,0.6,0}
\definecolor{gray}{rgb}{0.5,0.5,0.5}
\newtheorem{thm}{{\sc Theorem}}[section]
\newtheorem{prop}[thm]{{\sc Proposition}}
\newtheorem{cor}[thm]{{\sc Corollary}}
\newtheorem{lem}[thm]{{\sc Lemma}}
\newtheorem{df}[thm]{{\sc Definition}}
\newenvironment{proof}{\begin{sc}\noindent Proof: \end{sc}}{
     \hbox to 2em{}\nobreak\hfill$\blacksquare$\par\medskip}
\newcommand{\LR}{\hbox{Little\-wood-Richard\-son}}
\font\linear=line10 scaled \magstep5
\def\slant{{\linear ,}}
\def\young@lign{\everycr{}\tabskip0pt\halign}
\def\Mathstrut@{\setbox\z@\hbox{$($}\setbox\tw@\null\ht\tw@\ht\z@\dp\tw@\dp\z@
 \box\tw@}
\def\b@m#1{$\m@th\underline{#1}$}
\def\t@p#1{$\m@th\overline{#1}$}
\def\young{
\setbox\strutbox=\hbox{\vrule height3pt depth3.5pt width\z@}
 \offinterlineskip%
 {}\,\vcenter
\bgroup
     \def\1{{}}
     \def\2{\1&\1}
     \def\3{\1&\1&\1}
     \def\4{\1&\1&\1&\1}
%
%
%
\let\\=\cr
 \tabskip0pt\baselineskip0pt\m@th
 \young@lign
 \bgroup\vrule\b@m{\hbox to .75 em{\strut\hfil$##$\hfil}}\vrule %
   &&\b@m{\hbox to .75em{\strut\hfil$##$\hfil}}\vrule\crcr
   \noalign{\hrule}
  }
\def\endyoung{\egroup \egroup\,}
\def\slyoung{
   \setbox\strutbox=\hbox{\vrule height10pt depth2pt width\z@}
   \offinterlineskip%
   {}\,\vcenter
\bgroup
     \def\1{{}}
     \def\2{\1&\1}
     \def\3{\1&\1&\1}
     \def\4{\1&\1&\1&\1}
%
%
%
\let\\=\cr
 \tabskip0pt\baselineskip0pt\m@th
 \young@lign
 \bgroup\lower2pt\hbox{\slant}\kern-25pt\b@m{\hbox to 2.5em{\strut\hfil$##\;\;$}}\vrule
   &&\b@m{\hbox to 2.5em{\strut\hfil$##$\hfil}}\vrule\crcr
   \noalign{\hrule}
  }
\def\endslyoung{\egroup \egroup\,}
\def\frame #1#2#3#4{\vbox{\hrule height #1pt
 \hbox{\vrule width #1pt\kern #2pt
 \vbox{\kern #2pt
 \vbox{\hsize #3\noindent #4}
 \kern #2pt}
 \kern #2pt\vrule width #1pt}
 \hrule height0pt depth #1pt}}
\def\nframe #1#2#3#4{\vbox{
 \hrule height #1pt width0pt
 \hbox{\vrule height0pt width #1pt\kern #2pt
 \vbox{\kern #2pt
 \vbox{\hsize #3\noindent #4}
 \kern #2pt}
 \kern #2pt\vrule width #1pt height0pt}
 \hrule height0pt width0pt}}
\def\leftwall #1#2#3#4{\vbox{
 \hrule height #1pt width0pt
 \hbox{\vrule width #1pt\kern #2pt
 \vbox{\kern #2pt
 \vbox{\hsize #3\noindent #4}
 \kern #2pt}
 \kern #2pt\vrule width #1pt height0pt}
 \hrule height0pt width0pt}}
\def\rightwall #1#2#3#4{\vbox{
 \hrule height #1pt width0pt
 \hbox{\vrule height0pt width #1pt\kern #2pt
 \vbox{\kern #2pt
 \vbox{\hsize #3\noindent #4}
 \kern #2pt}
 \kern #2pt\vrule width #1pt }
 \hrule height0pt width0pt}}
\definecolor{light}{gray}{.75}
\def\muone #1{\frame{.3}{2}{140pt}{\centerline{#1}\vphantom{(}}}
\def\mutwo #1{\frame{.3}{2.4}{80pt}{\centerline{#1}\vphantom{(}}}
\def\muthr #1{\frame{.3}{2.4}{40pt}{\centerline{#1}\vphantom{$k_{11}$(}}}
\def\mufour #1{\frame{.3}{2}{18pt}{\centerline{#1}\vphantom{$k_{11}$(}}}
\def\koneone #1{\frame{.3}{2}{80pt}{\centerline{#1}\vphantom{(}}}
\def\konetwo #1{\frame{.3}{2.4}{50pt}{\centerline{#1}\vphantom{(}}}
\def\konethr #1{\frame{.3}{2.4}{30pt}{\centerline{#1}\vphantom{(}}}
\def\konefour #1{\frame{.3}{2}{15pt}{\centerline{#1}\vphantom{(}}}
\def\ktwotwo #1{\frame{.3}{2.4}{70pt}{\centerline{#1}\vphantom{(}}}
\def\ktwothr #1{\frame{.3}{2.4}{40pt}{\centerline{#1}\vphantom{(}}}
\def\ktwofour #1{\frame{.3}{2}{20pt}{\centerline{#1}\vphantom{(}}}
\def\kthrthr #1{\frame{.3}{2.4}{35pt}{\centerline{#1}\vphantom{()}}}
\def\kthrfour #1{\frame{.3}{2}{50pt}{\centerline{#1}\vphantom{(}}}
\def\kfourfour #1{\frame{.3}{2}{25pt}{\centerline{#1}\vphantom{(}}}
\def\nl{\hfill\break} 
\begin{document}
\title{Matrix Pairs over Valuation Rings and ${\mathbb
R}$-Valued \LR\ Fillings}
\author{Glenn D.\ Appleby, Tamsen Whitehead\\
Department of Mathematics\\
Santa Clara University\\
Santa Clara, CA 95053\\
{\tt gappleby@scu.edu, tmcginley@scu.edu}}
\date{ }

\noindent {\bf Matrix Pairs over Valuation Rings and ${\mathbb
R}$-Valued \LR\ Fillings}

\medskip

\noindent
Glenn D. Appleby\\
 Tamsen Whitehead\\
{\em Department of Mathematics\\
and Computer Science,\\
Santa Clara University\\
Santa Clara,  CA 95053}\\
gappleby@scu.edu, tmcginley@scu.edu


\noindent \mbox{} \hrulefill \mbox{}
\section{Introduction}
In this paper we will present some significant extensions of results
relating invariants of matrices over valuation rings, and the
combinatorics of \LR\ fillings and sequences, first obtained in
\cite{lrcon} in the case of discrete valuation rings of
characteristic zero. There, a construction was made to associate
\LR\ fillings to the orbit of a pair of matrices $(M,N)$ over a
discrete valuation ring $R$ under a natural notion of paired matrix
equivalence. Here, we shall first extend these results to power
series rings with ${\mathbb R}$-valued exponents. This will allow us
to extend also the classical definition of \LR\ fillings of skew
shapes that not only goes beyond integer fillings, but will actually
allow fillings of arbitrary real-valued length, including rows of
``negative'' length.

With these extensions, we will be able to consider continuous
deformations of matrix parameters yielding invariants of our matrix
pairs, resulting in similar continuous deformations of associated
real-valued \LR\ fillings (called here ``L${\mathbb R}$-fillings").
These deformation results allow us to calculate effectively the
``dynamics" of L${\mathbb R}$ fillings in way not obtainable when
restricted to the case of non-negative integer fillings.  That is,
we can construct continuously parameterized families of L${\mathbb
R}$-fillings that interpolate, for example, between classical
integer-valued fillings. We will use these results to describe, in
matrix terms, a ``combinatorial core" of a filling, and the
relationship between it and the dynamics mentioned above.

In our previous work \cite{lrcon}, we showed that a matrix pair
$(M,N)$ actually yields {\em two} fillings, one a filling of
$\lambda / \mu$ with content $\nu$, and another of skew-shape
$\lambda / \nu$ with content $\mu$. In this paper we establish that
these fillings are in bijection, and do not depend on the particular
matrix realization of the filling. Further, we shall show that this
bijection (proved here for L${\mathbb R}$ fillings) extends the
previously established bijection between \LR\ fillings described in
\cite{kerber}, and generalized in the paper of \cite{Sottile}.  A
central feature of the proof will be an ``ordering lemma" which
describes the growth of L${\mathbb R}$-fillings under a continuous
deformation of the partitions associated to a matrix pair.  We will
use these results to quickly prove some results describing the
relationship between \LR\ and L${\mathbb R}$ fillings between
different shapes. We hope this will add new insight into these
classical bijections, and that the invariants of matrices over
valuation rings will become a new and effective way to investigate
the combinatorics of \LR\, and L${\mathbb R}$, fillings.

\section{\LR\ and L${\mathbb R}$ Fillings}
In this section we will define the notions of ${\mathbb R}$-valued
\LR\ fillings (``L${\mathbb R}$-fillings"). Before we can
demonstrate that these generalizations warrant attention, we will
need to lay further groundwork in the sequel where we will develop
the machinery to associate L${\mathbb R}$-fillings to invariants of
matrix pairs over valuation rings. For now, we will try to emphasize
the connections to the classical definitions, and the important
differences the ${\mathbb R}$-valued setting affords.

\begin{df} We will say a sequence $\mu = (\mu_1, \mu_2, \dots,
\mu_r)$ is an {\em ${\mathbb R}$-partition}, provided each $\mu_{i}
\in {\mathbb R}$, $1 \leq i \leq r$, and
\[ \mu_1 \geq \mu_2 \geq \dots \geq \mu_r. \]
We will say, in this case, the ${\mathbb R}$-partition $\mu$ has
{\em length} $r$.  Given an ${\mathbb R}$-partition $\mu= (\mu_1,
\mu_2, \dots, \mu_r)$, we shall use the notation $| \mu |$ to denote
\[  | \mu | = \mu_1 + \mu_2 + \cdots + \mu_r. \]
Given two ${\mathbb R}$-partitions $\mu$ and $\lambda$ of the same
length $r$, we shall let
\[ \mu \subseteq \lambda \]
to denote the condition that $\mu_i \leq \lambda_i$, for $1 \leq i
\leq r.$
\end{df}

Note that entries of ${\mathbb R}$-partitions may certainly be
negative, as, indeed, may the sum $| \mu |$.  Also, while many
authors use the term ``length'' to denote the number of non-zero
entries in an (integer-valued) partition, we will use ``length'' to
denote the number of entries in the sequence, in order to make
precise statements regarding the sizes of matrices associated to
such ${\mathbb R}$-partitions.

Below is our central combinatorial definition.  It generalizes the
classical definition of \LR\ sequences found, for example, in
\cite{mac}.

\begin{df}  Let $(\mu, \nu, \lambda)$ be a triple of ${\mathbb R}$-partitions such
that $|\mu | + |\nu| = | \lambda|$.  Let us define an indexed set of
real numbers $\{ k_{ij} \}$ (called the {\em parts} of the filling),
for $1 \leq i \leq r$, $i \leq j \leq r$, an {\em ${\mathbb
R}$-valued \LR\ filling of type $(\mu, \nu; \lambda)$},
(abbreviated: L${\mathbb R}$ filling) if

\begin{enumerate}
 \item (L${\mathbb R}$1) (${\mathbb R}$-Sums) For all $1 \leq j \leq
r$, and $1 \leq i \leq r$,
\[  \mu_{j}+\sum_{s=1}^{j} k_{sj} = \lambda_{j},
\quad \hbox{and} \quad \sum_{s=i}^{r}k_{is} = \nu_{i}. \]
\item (L${\mathbb R}$2)  (${\mathbb R}$-Non-negativity) For all $i$
and $j$, where $i < j$, we have $k_{ij} \geq 0$.  There is no
condition on the parts $k_{ii}$ except $k_{ii} \in {\mathbb R}$.
\item (L${\mathbb R}$3) (${\mathbb R}$-Column Strictness) For each
$j$, for $2 \leq j \leq r$ and $1 \leq i \leq j$ we require
\[  \mu_{j}+ k_{1j} + \cdots k_{ij} \leq \,  \mu_{(j-1)} +k_{1,(j-1)} +
 \cdots + k_{(i-1),(j-1)} . \label{R-cs}
\]
\item (L${\mathbb R}$4)  (${\mathbb R}$-Word Condition) For all $1
\leq i \leq r-1$, $i \leq j \leq r-1$,
\[   \sum_{s=i+1}^{j+1} k_{(i+1),s} \  \leq \  \sum_{s=i}^{j}k_{is}. \] \label{R-wd}
\end{enumerate}
Let the set of all L${\mathbb R}$-fillings of type $(\mu, \nu;
\lambda)$ be denoted $L{\mathbb R}(\mu, \nu; \lambda)$.
\end{df}

We emphasize again that, in particular, it is possible for terms
$k_{ii}$ in an L${\mathbb R}$ filling to be negative (as it is also
possible for the parts of the partitions $\mu$, $\nu$ and
$\lambda$).

The definition above extends the classical definition of \LR\
fillings of skew shapes.  In classical \LR\ fillings, a non-negative
integer $k_{ij}$ denotes the {\em number} of $i$'s appearing in row
$j$ of a skew shape. For example, with $\lambda =(15,10,8,6)$, $\mu
=(9,5,2,1)$, and $\nu =(11,6,3,2)$ a \LR\ filling of the skew shape
$\lambda / \mu$ with content $\nu$ could be depicted:

\vspace{0.3in} \psset{unit=0.7cm} \hspace{1in}
\begin{pspicture}(5,4) \psframe(0,0)(1,1) \psframe(0,1)(1,2)
\psframe(0,2)(1,3) \psframe(0,3)(1,4)

\psframe(1,0)(2,1) \psframe(1,1)(2,2)
\rput{*0}(1.5,0.5){$1$}\psframe(1,2)(2,3) \psframe(1,3)(2,4)

\psframe(2,0)(3,1) \psframe(2,1)(3,2)\rput{*0}(2.5,0.5){$2$}
\psframe(2,2)(3,3)\rput{*0}(2.5,1.5){$1$} \psframe(2,3)(3,4)

\psframe(3,0)(4,1) \psframe(3,1)(4,2)\rput{*0}(3.5,0.5){$3$}
\psframe(3,2)(4,3) \rput{*0}(3.5,1.5){$1$} \psframe(3,3)(4,4)

\psframe(4,0)(5,1) \psframe(4,1)(5,2)
\rput{*0}(4.5,0.5){$4$}\psframe(4,2)(5,3) \rput{*0}(4.5,
1.5){$2$}\psframe(4,3)(5,4)

\psframe(5,0)(6,1) \psframe(5,1)(6,2)
\rput{*0}(5.5,0.5){$4$}\psframe(5,2)(6,3)
\psframe(5,3)(6,4)\rput{*0}(5.5,1.5){$2$} \rput{*0}(5.5,2.5){$1$}

\psframe(6,1)(7,2) \psframe(6,2)(7,3) \psframe(6,3)(7,4)
\rput{*0}(6.5,1.5){$3$} \rput{*0}(6.5,2.5){$1$}

\psframe(7,1)(8,2) \psframe(7,2)(8,3) \psframe(7,3)(8,4)
\rput{*0}(7.5,1.5){$3$} \rput{*0}(7.5,2.5){$2$}

\psframe(8,2)(9,3) \psframe(8,3)(9,4) \rput{*0}(8.5,2.5){$2$}

\psframe(9,2)(10,3) \psframe(9,3)(10,4)  \rput{*0}(9.5,2.5){$2$}
\rput{*0}(9.5,3.5){$1$}

 \psframe(10,3)(11,4) \rput{*0}(10.5,3.5){$1$}

  \psframe(11,3)(12,4)\rput{*0}(11.5,3.5){$1$}

   \psframe(12,3)(13,4)\rput{*0}(12.5,3.5){$1$}

    \psframe(13,3)(14,4)\rput{*0}(13.5,3.5){$1$}

     \psframe(14,3)(15,4)\rput{*0}(14.5,3.5){$1$}
\end{pspicture}

\vspace{0.2in}

In our generalized setting, we will now interpret $k_{ij}$ as
denoting the {\em length} of a portion of the skew shape.
Pictorially, the above filling will be thought of as:

\vspace{0.3in} \psset{unit=0.7cm} \hspace{1in}
\begin{pspicture}(5,4)\psframe(0,0)(1,1)
\psframe(1,0)(2,1)\rput{*0}(1.5,0.5){$k_{14}$}
\rput{*0}(2.5,0.5){$k_{24}$} \rput{*0}(3.5,0.5){$k_{34}$}
\psframe(2,0)(3,1) \psframe(3,0)(4,1) \psframe(2,1)(4,2)
\rput{*0}(3,1.5){$k_{13}$} \rput{*0}(0.5,0.5){$\mu_4$}
 \psframe(0,1)(2,2) \psframe(0,2)(5,3)
\rput{*0}(1,1.5){$\mu_{3}$} \rput{*0}(2.5,2.5){$\mu_2$}
\psframe(0,3)(9,4) \rput{*0}(4.5,3.5){$\mu_1$} \psframe(9,3)(15,4)
\rput{*0}(12,3.5){$k_{11}$} \psframe(5,2)(7,3)
\rput{*0}(6,2.5){$k_{12}$} \psframe(7,2)(10,3)
\rput{*0}(8.5,2.5){$k_{22}$} \psframe(4,0)(6,1)
\rput{*0}(5,0.5){$k_{44}$} \psframe(4,1)(6,2)
\rput{*0}(5,1.5){$k_{23}$} \psframe(6,1)(8,2)
\rput{*0}(7,1.5){$k_{33}$}
\end{pspicture}

\vspace{0.2in}

\medskip

 Note that in our generalized,
${\mathbb R}$-valued \LR\ filling, $k_{ij}$ is a {\em real number}.
We shall call the parts $k_{ij}$ of a L${\mathbb R}$ filling such
that $i < j$ the {\em interior parts}, which must be non-negative by
definition. Parts of the form $k_{ii}$ we will call the {\em edge
parts}.  These are, in our formulation, allowed to be an arbitrary
real. Thus, a ${\mathbb R}$-valued \LR\ ``filling" of a skew shape
made out of non-negative ${\mathbb R}$-partitions, with all parts
non-negative, might be depicted as:

\bigskip
\hspace{1.4in} \vbox{ \offinterlineskip \openup-0pt \nl
\muone{$\mu_1 \vphantom{k_{11}}$}\koneone{$k_{11}$} \nl
\mutwo{$\mu_2
\vphantom{k_{22}}$}\konetwo{$k_{12}\vphantom{\mu_{2}}$}\ktwotwo{$k_{22}$}\nl
\muthr{$\mu_3$}\konethr{$k_{13}\vphantom{\mu_3}$}\ktwothr{$k_{23}$}\kthrthr{$k_{33}$}\nl
\mufour{$\mu_4$}\konefour{$k_{14}$}\ktwofour{$k_{24}$}\kthrfour{$k_{34}$}\kfourfour{$k_{44}$}\nl},

However, we will have to be more creative when drawing fillings for
which some $k_{ii} < 0$, or for ${\mathbb R}$-partitions with
negative parts. First, if $\mu = (\mu_1, \mu_2, \dots, \mu_r)$, and
we have some $\mu_i < 0$, we will depict this with a fixed choice of
center line, denoting the (horizontal distance from) the origin.
Further, negative ``boxes" will be shaded, with a smaller height.
 Thus, if $\mu = (\mu_1, \mu_2, \mu_3, \mu_4) = (7,3,-2, -4)$, we will draw this as

\vspace{0.3in} \psset{unit=0.7cm} \hspace{2in}
\begin{pspicture}(11,4)
\psline[linewidth=3pt, linestyle=dashed](0,0)(0,4)
\psframe(0,3)(7,4) \rput{*0}(3.5,3.5){$\mu_1$} \psframe(0,2)(3,3)
\rput{*0}(1.5,2.5){$\mu_2$}
\psframe[fillstyle=solid,fillcolor=lightgray](-2,1.1)(0,1.9)
\rput{*0}(-1,1.5){$\mu_3$}
\psframe[fillstyle=solid,fillcolor=lightgray](-4,0.1)(0,0.9)
\rput{*0}(-2,0.5){$\mu_4$}
\end{pspicture}

\vspace{0.2in} In these pictures, we will construct the diagram by
beginning at the "origin", and first attach the boxes for $\mu$, as
shown above.  When the part of $\mu$ is positive, the end of the row
moves to the right, and when negative it moves to the left.  We then
add the (necessarily non-negative) interior parts of the filling to
the end of each row, moving to right.  We obtain a picture such as:

\vspace{0.3in} \psset{unit=0.7cm} \hspace{2in}
\begin{pspicture}(11,4)
\psline[linewidth=3pt, linestyle=dashed](0,0)(0,4)
\psframe(0,3)(7,4) \rput{*0}(3.5,3.5){$\mu_1$} \psframe(0,2)(3,3)
\rput{*0}(1.5,2.5){$\mu_2$}
\psframe[fillstyle=solid,fillcolor=lightgray](-2,1.1)(0,1.5)
\rput{*0}(-1,1.3){$\mu_3$}
\psframe[fillstyle=solid,fillcolor=lightgray](-4,0.1)(0,0.5)
\rput{*0}(-2,0.3){$\mu_4$}
\psframe(3,2)(6,3)\psline{->}(0.2,1.7)(-2,1.7)
\psline{->}(1,1.7)(1.5,1.7) \rput{*0}(4.5,2.5){$k_{12}$}
\psframe(-2,1)(1.5,2) \rput{*0}(0.6,1.7){$k_{13}$}
\psframe(1.5,1)(4.5,2) \rput{*0}(3,1.5){$k_{23}$}
\psframe(-4,0)(-2.4,1) \rput{*0}(-3.2,0.75){$k_{14}$}
\psframe(-2.4,0)(1.2,1)\rput{*0}(-0.6,0.75){$k_{24}$}
\psline{->}(-1,0.7)(-2.4,0.7)\psline{->}(-0.2,0.7)(1.2,0.7)
\psframe(1.2,0)(3.8,1) \rput{*0}(2.5,0.5){$k_{34}$}
\end{pspicture}

\vspace{0.3in} To contend with possibly negative values for the edge
parts $k_{ii}$, we again represent them with shaded, thinner parts,
and attach them to the ends of the rows (moving to the left for
negative edge parts, and to the right for positive ones):

\vspace{0.3in} \psset{unit=0.7cm} \hspace{2in}
\begin{pspicture}(11,4)
\psline[linewidth=3pt, linestyle=dashed](0,0)(0,4)
\psframe(0,3)(7,4) \rput{*0}(3.5,3.5){$\mu_1$} \psframe(0,2)(3,3)
\rput{*0}(1.5,2.5){$\mu_2$}
\psframe[fillstyle=solid,fillcolor=lightgray](-2,1.1)(0,1.5)
\rput{*0}(-1,1.3){$\mu_3$}
\psframe[fillstyle=solid,fillcolor=lightgray](-4,0.1)(0,0.5)
\rput{*0}(-2,0.3){$\mu_4$}
\psframe(3,2)(6,3)\psline{->}(0.2,1.7)(-2,1.7)
\psline{->}(1,1.7)(1.5,1.7) \rput{*0}(4,2.5){$k_{12}$}
\psframe(-2,1)(1.5,2) \rput{*0}(0.6,1.7){$k_{13}$}
\psframe(1.5,1)(4.5,2) \rput{*0}(2.4,1.5){$k_{23}$}
\psline{->}(2.8,1.5)(3, 1.5)(3,1.8)(4.5,1.8)
\psline{->}(2.1,1.5)(1.5, 1.5) \psframe(-4,0)(-2.4,1)
\rput{*0}(-3.2,0.75){$k_{14}$}
\psframe(-2.4,0)(1.2,1)\rput{*0}(-0.6,0.75){$k_{24}$}
\psline{->}(-1,0.7)(-2.4,0.7)\psline{->}(-0.2,0.7)(1.2,0.7)
\psframe(1.2,0)(3.8,1) \rput{*0}(2,0.5){$k_{34}$}
\psframe[fillstyle=solid,fillcolor=lightgray](6.2,3.1)(7,3.9)
\psline{->}(4.4,2.5)(4.6, 2.5)(4.6,2.8)(6,2.8)
\rput{*0}(6.6,3.5){$k_{11}$}
\psframe[fillstyle=solid,fillcolor=lightgray](4.75,2.1)(6,2.7)
\rput{*0}(5.4,2.4){$k_{22}$} \psline{->}(3.6,2.5)(3,2.5)
\psframe[fillstyle=solid,fillcolor=lightgray](3.3,1.1)(4.5,1.7)
\rput{*0}(3.9,1.4){$k_{33}$}
\psframe[fillstyle=solid,fillcolor=lightgray](2.6,0.1)(3.8,0.7)
\rput{*0}(3.2,0.4){$k_{44}$} \psline{->}(2.3,0.5)(2.5,
0.5)(2.5,0.8)(3.8,0.8) \psline{->}(1.6,0.5)(1.2, 0.5)
\end{pspicture}

\vspace{0.3in}

In the above diagram, $\lambda$ is determined in each row as the
signed distance from the origin to the end of the edge part in that
row.  We also see that row $i$ of the ``skew shape" $\lambda / \mu$
has has size $\lambda_{i} - \mu_i$ (which may be negative), and that
the sum of the sizes of the rows of $\lambda / \mu$ is $| \nu |$.
 The meaning, visually, of the column-strictness (L${\mathbb R}$3)
and word conditions (L${\mathbb R}$4) may be unfamiliar, but can be
reconciled with the linear inequalities in their definitions, and
our conventions for drawing diagrams.  Note, in particular, that by
L${\mathbb R}$3, the Word Condition, we must have \[ k_{11} \geq
k_{22} \geq \dots \geq k_{rr}. \]  Thus if, for any $i$, $1 \leq i
\leq r$, we have $k_{ii} < 0$, then $k_{(i+\kappa)(i+\kappa)} < 0$
for all integers $\kappa \geq 0$.

For future reference, in any L${\mathbb R}$-filling, the skew shape
formed by the parts $k_{ij}$ for a fixed $i$ will be called ``the
$i$-strip" of the filling.

Alternately, one may define L${\mathbb R}$ fillings in terms of {\em
${\mathbb R}$-valued \LR\ sequences} ({\em L${\mathbb R}$
sequences}). That is, as a sequence of partitions $\lambda^{(0)}
\subseteq \lambda^{(1)} \subseteq \dots \subseteq \lambda^{(r)}$
such that the $i$-strip of the filling is formed by the skew shape
$\lambda^{(i)} / \lambda^{(i-1)}$.  This way, if we set $k_{ij}$ to
be the length of the portion of the skew shape $\lambda^{(i)} /
\lambda^{(i-1)}$ that appears in row $j$, the collection $\{ k_{ij}
\}$ forms an L${\mathbb R}$ filling.

Of course, one is at liberty to invent any number of rules for
constructing ${\mathbb R}$-valued diagrams, but doing so does not
make such rules necessary or of interest.  We feel, however, that
this particular choice of generalized \LR\ filling is the ``right"
one, insofar as, on the basis of the work to follow, the L${\mathbb
R}$-fillings we have defined here {\em actually occur} as invariants
for matrix pairs over valuation rings (with an ${\mathbb R}$
valuation) in a way that directly generalizes previous results for
pairs over discrete valuation rings.

This interpretation was anticipated by the results of \cite{pak},
who established simple linear bijections between classical \LR\
fillings and non-negative integer valued hives (an alternate
combinatorial object of much recent interest). However, hives are
definable over ${\mathbb R}$ generally, so the inverse of their
linear map need not have been restricted to integer-valued hives.
When their maps are applied to the set of all hives, the image is
our L${\mathbb R}$-fillings.  The requirement that $k_{ij} \geq 0$
when $i < j$ for interior parts comes out as a natural requirement
imposed by the so-called ``rhombus inequalities" used in the
definition of hives. The requirement that $k_{ii} \geq 0$ for the
edge parts of classical \LR\ fillings is the result of an imposed
boundary condition on the hive, which restricts the partitions
$\mu$, $\nu$, and $\lambda$ to be non-negative and integer-valued.
Dropping this constraint, we obtain the L${\mathbb R}$-fillings
defined above.  There are, in fact, numerous other fruitful
connections between hives, L${\mathbb R}$-fillings, and matrix pairs
over valuation rings, obtainable by utilizing concepts that arise in
one of these three contexts, and then translating them to the other
two.  This work will be forthcoming \cite{hive-symm}.

So, while the linear map in \cite{pak} could define a proposed
${\mathbb R}$-valued \LR\ filling, it would not establish that such
fillings are realized as invariants for actual mathematical objects.
Correcting this omission is the work of the rest of this paper.

\section{Matrices over ${\mathbb R}$-Valuation Rings}

\begin{df}
Let ${\cal F}$ denote the field of formal ${\mathbb R}$-valuated
Laurent series with sparse exponents.  That is, ${\cal F}$ is the
field of formal series in the variable $t$ such that:
\begin{enumerate}
\item Every element $a \in F$
may be written $a=\sum_{i=0}^{\infty}a_{i}t^{\alpha_{i}}$, where
$a_{i}$ is in a fixed field of characteristic zero, and $\alpha_{i}
\in {\mathbb R}$.
\item For a given $a \in {\cal F}$, $a = \sum_{i=0}^{\infty}a_{i}t^{\alpha_{i}}$,
the sequence of exponents $(\alpha_{0}, \alpha_{1}, \dots)$, is
strictly increasing, and, in particular, bounded below. \item For a
given $a \in {\cal F}$, $a=\sum_{i=0}^{\infty}a_{i}t^{\alpha_{i}}$,
the set of exponents $\{ \alpha_{i} : i \in {\mathbb N} \}$ has no
limit points. In particular, any finite interval of the real line
may contain only finitely many non-zero exponents $\alpha_i$
appearing in any element $a \in {\cal F}$.
\end{enumerate}
\end{df}

These hypotheses on ${\cal F}$ ensure that defining multiplication
of elements of ${\cal F}$ via
\[ a \cdot b = \left(\sum_{i=0}^{\infty}a_{i}t^{\alpha_{i}}\right) \cdot \left(\sum_{j=0}^{\infty}
b_{j}t^{\beta_{j}} \right) = \sum_{k=0}^{\infty}
c_{k}t^{\kappa_{k}},
\] where $\alpha_{0} + \beta_{0} = \kappa_{0} < \kappa_1 < \cdots$,
and
\[ c_{k} = \sum_{\alpha_{i} + \beta_{j} =
\kappa_{k}}a_{i} b_{j}, \] is well-defined since the sparseness
condition on the exponents guarantees that there are only finitely
many solutions to the equation $\alpha_{i} + \beta_{j} = \kappa_{k}$
among the exponents of $\alpha_{i}$ and $\beta_{j}$.

The field ${\cal F}$ comes with a natural valuation, or ``norm",
over ${\mathbb R}$ by defining, for all $a \in {\cal F}$, $a \neq
0$:
\[ \|a \| = \alpha_{0}, \quad \hbox{if} \quad a =
\sum_{i=0}^{\infty}a_{i}t^{\alpha_{i}}. \]
 This norm clearly satisfies:
 \[ \| a \cdot b \| = \| a \| + \| b \|, \quad
 \| a \|  +  \| b \| \leq \| a + b \|, \]
and, in particular, $\| t^s \cdot a \| = s + \| a \|$.  We shall
sometimes refer to the value $\|a \|$ for $a \in {\cal F}$ as the
{\em order} of $a$.

 With this definition of ${\cal F}$, we may now define the subring $R \subseteq {\cal F}$ by
 \[ R = \{a \in {\cal F} : \| a \| \geq 0 \}. \]

Let $\| a \| = \alpha.$  Then we may write $a$ uniquely in the form:
\[ a = t^{\alpha} \cdot u, \quad \hbox{where} \quad \| u \| = 0.
\]
In particular, $\| u \| =  0$ implies $u$ is a {\em unit} in $R$
(and not just in ${\cal F}$). The only issue in computing the
multiplicative inverse of a unit $u =
\sum_{i=0}^{\infty}\sigma_{i}t^{s_i}$ is to note that if we formally
set
\[ u^{-1} = \sum_{i=0}^{\infty} w_i t^{v_i}, \]
and try to solve for the coefficients $w_{i}$, we need only first
require $v_i= s_i$ (that is, the exponents appearing in the
expansions of $u$ and $u^{-1}$ are the same). With this proviso, one
can recursively solve for the coefficients $w_i$ as in the case of
formal power series. Let $R^{\times}$ denote the units of $R$.
 If $a = t^{\alpha} \cdot u$, for $u \in
 R^{\times},$ then $a^{-1} = t^{-\alpha}\cdot u^{-1}$.

Let $M_{r}({\cal F})$ denote the set of $r \times r$ invertible
matrices over ${\cal F}$, and let $GL_{r}(R)$ denote the group of
invertible $r \times r$ matrices over $R$ (that is, matrices with an
inverse over $R$). Since matrix equivalence over the field $F$ is
not very interesting, we shall view the columns (or sometimes rows)
of a matrix $M \in M_{r}({\cal F})$ as spanning a module over $R$,
and will extend the classical theory of invariant factors to this
case, at least for finitely generated modules (finitely generated
$R$-submodules of ${\cal F}$ are principal, so for such modules the
classical theory applies). Indeed, most of what we do here could
probably be formulated in the context of ${\mathbb R}$-valuated
lattices, extending parts of the theory of buildings over discrete
valuation rings.  It will be enough for our purposes to work over
matrices, however, and we will be able to develop explicit
computational formulas in this context.

All we need to establish at this point is the existence of invariant
factors, viewed as $GL_{r}(R)$ invariants of full-rank matrices over
${\cal F}$.  Indeed, using only elementary row and column operations
over $R$, acting on a matrix $M \in M_{r}({\cal F})$, the following
theorem is easy to prove:

\begin{thm}  Let $M \in M_{r}({\cal F})$.  Then there exist $P,Q \in
GL_{r}(R)$ such that
\[ PMQ^{-1} = diag(t^{\mu_1}, t^{\mu_2}, \dots ,
t^{\mu_r}), \] where $\mu_1 \geq \mu_2 \geq \dots \geq \mu_r$.  The
exponents $\mu = (\mu_1, \mu_2, \dots , \mu_r)$ are uniquely
determined by $M$ and are an invariant of the orbit under the action
of matrix equivalence.\label{inv-fac}
\end{thm}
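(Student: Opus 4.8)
The plan is to mimic the classical Smith normal form argument, paying attention to the fact that $R$ is a valuation ring with value group ${\mathbb R}$ rather than ${\mathbb Z}$, so we must track orders of entries carefully and argue that a suitable ``minimal order'' entry can always be moved to the pivot position and used to clear its row and column. First I would establish existence: given $M \in M_r({\cal F})$, since $M$ has full rank over ${\cal F}$ we may assume after multiplying by a large power of $t$ (which only shifts all $\mu_i$ by a constant and is reversible) that $M$ has all entries in $R$; more cleanly, I will not renormalize but simply allow the $\mu_i$ to be arbitrary reals. Among all nonzero entries of $M$, pick one of minimal order, say $m_{k\ell}$ with $\|m_{k\ell}\| = \alpha$; write $m_{k\ell} = t^\alpha u$ with $u \in R^\times$ as guaranteed by the discussion preceding the theorem. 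Using row and column swaps (elements of $GL_r(R)$) move this entry to position $(1,1)$. Because $\alpha$ is minimal, every other entry $m_{1j}$ in the first row satisfies $\|m_{1j} m_{11}^{-1}\| = \|m_{1j}\| - \alpha \geq 0$, so $m_{1j}m_{11}^{-1} \in R$; subtracting $m_{1j}m_{11}^{-1}$ times the first column from the $j$-th column (a unipotent, hence invertible over $R$, column operation) clears entry $(1,j)$. Symmetrically clear the first column. These operations may alter the other entries of $M$, but they only add $R$-multiples of entries of order $\geq \alpha$, so every entry of the resulting matrix still has order $\geq \alpha$, and in particular the bottom-right $(r-1)\times(r-1)$ block has all entries of order $\geq \alpha$. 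Recurse on that block; the minimality of $\alpha$ at each stage guarantees $\mu_1 \geq \mu_2 \geq \cdots \geq \mu_r$. This terminates after $r$ steps, yielding $PMQ^{-1} = \mathrm{diag}(t^{\mu_1},\dots,t^{\mu_r})$ with $P,Q \in GL_r(R)$ (products of the elementary matrices used), and the $t^{\mu_i}$ may be replaced by $t^{\mu_i}$ exactly since any unit factor accumulated can be absorbed into $P$ or $Q$.

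For uniqueness I would use the standard determinantal-divisor characterization, adapted to the valuation. For $1 \leq d \leq r$, let $\Delta_d(M)$ denote the minimum over all $d \times d$ minors of $M$ of the order (valuation) of that minor — equivalently, the order of the ideal generated by all $d\times d$ minors, since $R$ is a valuation ring and hence the sum of the principal ideals generated by the minors is again principal, generated by the one of least order. The key point is that $\Delta_d$ is invariant under the $GL_r(R) \times GL_r(R)$ action: left or right multiplication by an invertible matrix over $R$ transforms each $d\times d$ minor into an $R$-linear combination of $d\times d$ minors (Cauchy–Binet), which cannot decrease the minimum order, and since the action is invertible the minimum order is exactly preserved. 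Evaluating $\Delta_d$ on the diagonal form gives $\Delta_d = \mu_1 + \mu_2 + \cdots + \mu_d$ (the least-order $d\times d$ minor of $\mathrm{diag}(t^{\mu_1},\dots,t^{\mu_r})$ is the leading principal one, precisely because $\mu_1 \geq \cdots \geq \mu_r$). Hence $\mu_d = \Delta_d(M) - \Delta_{d-1}(M)$ is determined by the orbit of $M$, which proves both uniqueness of the sequence and its invariance.

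The main obstacle is not conceptual but lies in the careful bookkeeping of orders under the elementary operations in the existence argument: one must verify that clearing the first row and column never creates an entry of order strictly less than $\alpha$, so that the recursion genuinely produces a weakly decreasing sequence $\mu_1 \geq \cdots \geq \mu_r$. This rests entirely on the ultrametric inequality $\|a+b\| \geq \min(\|a\|,\|b\|)$ — stated in the excerpt as $\|a\| + \|b\| \leq \|a+b\|$ after the relevant substitution, i.e. $\min(\|a\|,\|b\|) \leq \|a+b\|$ — together with $\|ab\| = \|a\| + \|b\|$, both of which hold here; the value group being ${\mathbb R}$ rather than ${\mathbb Z}$ causes no difficulty since we never need a discreteness or well-ordering property of the value group, only that among finitely many entries a minimum order is attained, which is automatic. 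A minor additional check in the uniqueness half is that the ideal generated by the $d\times d$ minors is principal — but this is exactly the observation, already invoked in the excerpt, that a finitely generated $R$-submodule of ${\cal F}$ is principal, applied to the submodule of ${\cal F}$ spanned by the minors.
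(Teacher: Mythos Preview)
Your overall strategy matches the paper's, which simply asserts that the result is ``easy to prove'' using elementary row and column operations over $R$ and supplies no further details; your Smith-normal-form reduction together with the determinantal-divisor uniqueness argument is exactly what is intended. However, you have the inequalities reversed in two places, and as written those steps are false.

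In the existence half, selecting the entry of \emph{minimal} order $\alpha$ as the first pivot produces a first diagonal entry of order $\alpha$ and a remaining $(r-1)\times(r-1)$ block whose entries all have order $\geq \alpha$; recursion therefore yields a \emph{weakly increasing} sequence of diagonal orders, not the decreasing sequence $\mu_1 \geq \cdots \geq \mu_r$ that the statement demands. You must conjugate by the order-reversing permutation matrix (which lies in $GL_r(R)$) at the end, or relabel. In the uniqueness half the same slip recurs: under the convention $\mu_1 \geq \cdots \geq \mu_r$, the $d\times d$ minor of $\mathrm{diag}(t^{\mu_1},\dots,t^{\mu_r})$ of \emph{least} order is the trailing principal one, of order $\mu_{r-d+1}+\cdots+\mu_r$; the leading principal minor you name has the \emph{largest} order among $d\times d$ minors. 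The correct recovery formula is $\mu_{r-d+1}=\Delta_d(M)-\Delta_{d-1}(M)$. With these two corrections the argument is complete.
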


\begin{df}
Given $M \in M_{r}({\cal F})$, if there are $P,Q \in GL_{r}(R)$ such
that \[ PMQ^{-1} = D_{\mu} = diag(t^{\mu_1}, t^{\mu_2}, \dots ,
t^{\mu_r}), \] where $\mu_1 \geq \mu_2 \geq \dots \geq \mu_r$, we
shall call the ${\mathbb R}$-partition $\mu = (\mu_1, \mu_2, \dots,
\mu_r)$ the {\em invariant parition} of $M$, and denote this by
\[ inv(M) = \mu = (\mu_1, \mu_2, \dots,
\mu_r). \]
\end{df}

When $R$ is a principal ideal domain, the invariant factors of a
matrix are elements of $R$ with certain divisibility relations.  In
the case of a {\em discrete} valuation ring, such elements may be
written in the form $t^{\mu_{i}}$, where $\mu_{i}$ is a non-negative
integer.  Over the field ${\cal F}$, of course, we have the $\mu_{i}
\in {\mathbb R}$.

 Much of the classical theory of invariant factors, in the
 case of a discrete valuation ring, goes over to the ${\mathbb
 R}$-valuation ring presented here.  In~\cite{carlson-sa} the theory
 of {\em $s$-spaces} was developed to present a unified development
 of the theory of invariant factors over various rings, and also
 eigenvalues of hermitian matrices.  Let ${\cal F}^r$ be viewed as a free, finitely
 generated module over $R$.  Let us
 set \[ {\cal S}_{i} = \{ \hbox{finitely generated $R$-submodules of
 ${\cal F}^r$ of rank $i$} \}. \]
Given a finitely generated $R$-submodule ${\cal M}$ of ${\cal
 F}^r$, generated by $\{ a_{1}, \dots, a_k \},$ (we may assume this set is linearly
 independent over $R$) then let us define, given some
 $x \in {\cal M}$,
 \[ \| x \|_{\cal M} = \min_{1 \leq s \leq k} \| x_{i} \|, \quad \hbox{where}
 \quad x = x_{1}a_{1} + \dots + x_{k} a_{k}. \]
 Finally, given some $R$-module homomorphism $\phi : {\cal F}^r
 \rightarrow {\cal F}^r$, let us define
 \[ \psi : {\cal F}^r/ \{ 0 \} \rightarrow {\mathbb R}, \]
 by
 \[ \psi(x) = \frac{ \| \phi(x) \|_{\phi({\cal F}^r)} }{\| x \|_{{\cal F}^r}}. \]
where $\phi({\cal F}^r)$ is the $R$-submodule formed by the image of
${\cal F}^r$ under the map $\phi$.  It is then easily checked that
these definitions allow us to give
 $({\cal F}^r, \{ {\cal S}_{i} \} )$ an $s$-space structure such
 that $\psi$ is, as defined in~\cite{carlson-sa},
 $\ell$-diagonalizable (that is, the definition of $\psi$ allows one to define a
 generalized Rayleigh quotient for finitely generated submodules).  Once this has been determined, the
 following is obtained as a consequence:

\begin{thm}[Interlacing] Let $M \in M_{r}({\cal F})$, and let $H$ be a
square submatrix of $M$ of size $s$.  If the invariant partition of
$M$ is $inv(M) = \mu = (\mu_1, \mu_2, \dots , \mu_r)$, and that of
$H$ is $inv(H) = \sigma = (\sigma_1, \dots , \sigma_s)$, then
\[ \mu_{i} \geq \sigma_i \geq \mu_{i +r-s}, \quad 1 \leq i \leq s.
\] \label{interleaving}
\end{thm}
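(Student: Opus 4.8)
The plan is to deduce the interlacing inequalities from the classical theory of invariant factors over discrete valuation rings by an approximation/specialization argument, using the $s$-space machinery described above. The key observation is that the interlacing theorem for $s$-spaces (as established in \cite{carlson-sa}) applies once one knows that the Rayleigh-quotient function $\psi$ attached to an $R$-module homomorphism is $\ell$-diagonalizable; the excerpt has just asserted that $({\cal F}^r, \{{\cal S}_i\})$ carries such an $s$-space structure. So the first step is to set up the correspondence precisely: given $M \in M_r({\cal F})$, view $M$ as the matrix (in the standard basis) of the homomorphism $\phi_M : {\cal F}^r \to {\cal F}^r$; then $\operatorname{inv}(M) = \mu$ records exactly the invariant factors of the submodule $\phi_M({\cal F}^r)$ inside the free module ${\cal F}^r$, i.e. the ``singular values'' of $\phi_M$ in the $s$-space sense. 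A size-$s$ square submatrix $H$ of $M$ corresponds to restricting $\phi_M$ to a coordinate subspace $W \subseteq {\cal F}^r$ of rank $s$ and then projecting onto a rank-$s$ coordinate quotient; the invariant partition $\sigma = \operatorname{inv}(H)$ is the induced $s$-space spectrum of this compression $\phi_H = \pi \circ \phi_M|_W$.

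Second, I would invoke the general interlacing statement for $\ell$-diagonalizable maps on $s$-spaces: passing to a rank-$s$ subspace can only move each of the top $s$ spectral values down, and it cannot push the $i$-th one below the $(i+r-s)$-th value of the ambient spectrum. Concretely, for the upper bound $\mu_i \geq \sigma_i$ one uses a min-max characterization: $\mu_i = \max_{\dim U = i} \min_{0 \neq x \in U} \psi_M(x)$ over rank-$i$ submodules $U \subseteq {\cal F}^r$, and since the competing subspaces for $\sigma_i$ form a subfamily (those lying in $W$), taking the max over fewer subspaces gives $\sigma_i \leq \mu_i$. For the lower bound $\sigma_i \geq \mu_{i+r-s}$ one uses the dual max-min characterization together with a dimension count: any rank-$i$ subspace of $W$ meets the ``bad'' ambient subspace (on which $\psi_M \leq \mu_{i+r-s}$ fails) only in the expected way, because $W$ has corank $r-s$, forcing the intersection to have rank at least $i - (r-s) + (r - (i+r-s-1)) \geq 1$, whence some vector in it has $\psi$-value $\geq \mu_{i+r-s}$. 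The projection $\pi$ onto a coordinate quotient only raises $\psi$-values (it can only decrease the denominator norm relative to the numerator, in the $s$-space ordering), so it does not spoil either bound.

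Third, I would verify the two technical points the argument rests on. One is that the norm $\|\cdot\|_{\cal M}$ and the quotient $\psi$ behave correctly under restriction to a coordinate subspace and under a coordinate projection — namely that restriction does not increase $\|\cdot\|$ on the source and projection does not increase the order-drop, so that $\psi_{H}(x) \geq \psi_M(x)$ for $x \in W$; this is exactly the compatibility needed for the min-max estimates, and it follows directly from the definitions of $\|x\|_{{\cal F}^r}$ and $\|\phi(x)\|_{\phi({\cal F}^r)}$ as minima over generators. The other is that the $s$-space spectrum so defined genuinely coincides with the invariant partition from Theorem~\ref{inv-fac}: this is where one uses that finitely generated $R$-submodules of ${\cal F}$ (hence of ${\cal F}^r$) are free, so the classical Smith-normal-form argument applies verbatim and the $\ell$-diagonalization of $\psi$ recovers the exponents $t^{\mu_i}$.

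The main obstacle I anticipate is not the combinatorial min-max bookkeeping but rather making the $s$-space framework of \cite{carlson-sa} interface cleanly with \emph{square} submatrices: a general square submatrix is obtained by deleting rows \emph{and} columns, so one must handle both a restriction (delete columns, i.e. pass to a subspace of the source) and a projection (delete rows, i.e. pass to a quotient of the target) simultaneously, and check that the interlacing bounds survive the composition of these two operations with the correct indices. Keeping track of how the shift $i \mapsto i + r - s$ arises as the sum of the coranks contributed by the two operations — rather than from a single one — is the delicate point, and I would handle it by first proving the one-sided statements (submatrix obtained by deleting only columns, then only rows) and composing them.
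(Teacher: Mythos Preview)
Your proposal is correct and takes essentially the same approach as the paper. The paper does not give a self-contained proof of the interlacing theorem at all: it simply sets up the $s$-space structure on $({\cal F}^r,\{{\cal S}_i\})$, asserts that the Rayleigh-quotient function $\psi$ is $\ell$-diagonalizable in the sense of Carlson and Marques de S\'a \cite{carlson-sa}, and then states the interlacing inequalities as a direct consequence of the general $s$-space interlacing theorem from that reference. Your write-up is in fact more detailed than the paper's, since you unpack the min-max/max-min mechanism and flag the need to treat a square submatrix as a composite of a restriction (column deletion) and a projection (row deletion), each contributing to the index shift $i\mapsto i+r-s$; the paper leaves all of this implicit in the citation.
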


In particular, if in the above Theorem, $H$ is an $(r-1) \times
(r-1)$ submatrix of $M$, then
\[ \mu_1 \geq \sigma_1 \geq \mu_2 \geq \sigma_2 \geq \dots \geq
\mu_{r-1} \geq \sigma_{r-1} \geq \mu_r. \]

 We will begin by recording some definitions, notation, and
preliminary lemmas that will be used throughout the paper.

\begin{df}  Let $\mu$ and $\nu$ be ${\mathbb R}$-partitions.
Define
\[ D_{\mu} = diag(t^{\mu_1}, t^{\mu_2}, \dots, t^{\mu_r}), \]
that is, the diagonal matrix with the entries $t^{\mu_1}, t^{\mu_2},
\dots, t^{\mu_r}$ on the diagonal.

Similarly, set
\[ D_{\widehat{\nu}} = diag(t^{\nu_r}, t^{\nu_{(r-1)}}, \dots, t^{\nu_1}). \]
\end{df}

\begin{df}
Given two pairs of matrices $(M,N), (A,B) \in M_{r}({\cal F})^2$, we
say a $(M,N)$ is {\em pair equivalent} to $(A,B)$ if there are
matrices $P,Q,T \in GL_{r}(R)$ such that
\[ (A,B) = (P,Q,T) \cdot (M,N) = (PMQ^{-1}, QNT^{-1}). \]
\end{df}

That is, though the pairs $(M,N)$ are defined over the field ${\cal
F}$, we will restrict the conjugations to invertible matrices over
the subring $R$.  If $(M,N)$ is pair equivalent to $(A,B)$, then
$inv(M)=inv(A)$, $inv(N) = inv(B)$, and also $inv(MN) = inv(AB)$.

   Note that by Theorem~\ref{inv-fac} there are
$P,Q \in GL_{r}(R)$ such that
\[ (P,Q, 1) \cdot (M,N) = (PMQ^{-1}, QN) = (D_{\mu}, N'), \]
where $\mu = inv(M).$  Thus, when calculating invariants of the
orbit of a matrix pair, we may assume that one entry in the pair is
diagonalized.  We shall look to put $N'$ in a form from which a
L${\mathbb R}$ filling may be determined.  In order to accomplish
this we will restrict our paired conjugations to those of the
following form:

\begin{df}  Let $\mu$ be an ${\mathbb R}$-partition.  We will say that
a square matrix $Q \in GL_{r}(R)$ is {\em $\mu$-admissible} iff
\[ D_{\mu}Q D_{\mu}^{-1} \in GL_{r}(R). \]  Note that if $Q$ is
$\mu$-admissible, and $q_{ij}$ is the $(i,j)$-entry of $Q$, then
\[ \| q_{ij} \| \geq \mu_{j} - \mu_{i}. \]
\end{df}

This condition only imposes constraints on entries below the
diagonal, where $q_{ij}$ will have to have a sufficiently large
order.  This condition is not only necessary, but, as is easy to
check, sufficient.

\begin{lem} Suppose $Q \in GL_{r}(R)$.  Suppose, given some ${\mathbb R}$-partition
$\mu$, the $(i,j)$ entry of $Q$ is $q_{ij}$, and that we have
\[ \| q_{ij} \| \geq \mu_{j} - \mu_{i}. \]
Then $Q$ is $\mu$-admissible.
\end{lem}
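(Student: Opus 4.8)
The plan is to show directly that the matrix $D_\mu Q D_\mu^{-1}$ has all entries in $R$, and that the same holds for its inverse, so that it lies in $GL_r(R)$. First I would compute the $(i,j)$-entry of $D_\mu Q D_\mu^{-1}$: since $D_\mu = \mathrm{diag}(t^{\mu_1},\dots,t^{\mu_r})$, multiplying $Q$ on the left by $D_\mu$ scales row $i$ by $t^{\mu_i}$, and multiplying on the right by $D_\mu^{-1}$ scales column $j$ by $t^{-\mu_j}$, so the $(i,j)$-entry of $D_\mu Q D_\mu^{-1}$ is $t^{\mu_i} q_{ij} t^{-\mu_j} = t^{\mu_i - \mu_j} q_{ij}$. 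Using the norm property $\|t^s a\| = s + \|a\|$ and the hypothesis $\|q_{ij}\| \geq \mu_j - \mu_i$, we get $\|t^{\mu_i - \mu_j} q_{ij}\| = (\mu_i - \mu_j) + \|q_{ij}\| \geq (\mu_i - \mu_j) + (\mu_j - \mu_i) = 0$. Hence every entry of $D_\mu Q D_\mu^{-1}$ has nonnegative order, i.e. lies in $R$, so $D_\mu Q D_\mu^{-1} \in M_r(R)$.

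It remains to check that $D_\mu Q D_\mu^{-1}$ has an inverse over $R$, not merely that it is a matrix over $R$. Here I would use that $Q \in GL_r(R)$, so $Q^{-1} \in GL_r(R)$; write $q'_{ij}$ for the entries of $Q^{-1}$, so $\|q'_{ij}\| \geq 0$. The cleanest route is to observe that $(D_\mu Q D_\mu^{-1})^{-1} = D_\mu Q^{-1} D_\mu^{-1}$, whose $(i,j)$-entry is $t^{\mu_i - \mu_j} q'_{ij}$. To conclude this lies in $R$ I need $\|q'_{ij}\| \geq \mu_j - \mu_i$ as well. This does follow: applying the already-proved direction of the lemma (or re-deriving it) is circular since $Q^{-1}$ need not satisfy the displayed hypothesis a priori, so instead I would argue that $\mathrm{det}(Q)$ is a unit in $R$ (as $Q \in GL_r(R)$), hence $\|\det Q\| = 0$, and combine this with the interlacing/triangularity structure — or, more simply, note that $\det(D_\mu Q D_\mu^{-1}) = \det Q$ is a unit of $R$, and a matrix over $R$ with unit determinant is invertible over $R$ by Cramer's rule, since the adjugate has entries in $R$. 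This last observation is the decisive point: $D_\mu Q D_\mu^{-1} \in M_r(R)$ together with $\det(D_\mu Q D_\mu^{-1}) = \det Q \in R^\times$ immediately gives $D_\mu Q D_\mu^{-1} \in GL_r(R)$ without needing to estimate the entries of the inverse at all.

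The main obstacle, and the step I would be most careful about, is precisely this invertibility-over-$R$ point: it is tempting to only verify that all entries of $D_\mu Q D_\mu^{-1}$ lie in $R$ and declare victory, but membership in $GL_r(R)$ genuinely requires the inverse to be over $R$ too. The resolution via $\det(D_\mu Q D_\mu^{-1}) = \det(D_\mu)\det(Q)\det(D_\mu^{-1}) = \det Q$, which is a unit of $R$ because $Q \in GL_r(R)$, is short but essential, and it relies on the fact established in the preamble that an element of $R$ of order $0$ is a unit of $R$ (not merely of $\mathcal{F}$). I would also remark, matching the surrounding discussion, that the constraints $\|q_{ij}\| \geq \mu_j - \mu_i$ are only active when $\mu_j > \mu_i$, i.e. below the diagonal in the sense of the ordering of $\mu$, and are automatically satisfied for $i = j$ and wherever $\mu_j \leq \mu_i$ since $\|q_{ij}\| \geq 0$ already holds for $Q \in GL_r(R)$; this explains the parenthetical remark in the preceding definition that the condition ``only imposes constraints on entries below the diagonal.''
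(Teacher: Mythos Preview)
Your proof is correct. The paper does not actually supply a proof of this lemma: it merely remarks beforehand that the entrywise condition is ``not only necessary, but, as is easy to check, sufficient,'' and then states the lemma without a proof environment. Your write-up fills in exactly the details one would expect---the entrywise computation showing $D_\mu Q D_\mu^{-1}\in M_r(R)$, and then the clean observation that $\det(D_\mu Q D_\mu^{-1})=\det Q\in R^\times$ so Cramer's rule gives invertibility over $R$---and your caution about not stopping at ``entries lie in $R$'' is well placed.
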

In particular, any upper-triangular matrix is $\mu$-admissible, for
any ${\mathbb R}$-partition $\mu$.

Given a matrix pair $(D_{\mu}, N')$, if $Q$ is $\mu$-admissible,
then $D_{\mu}QD_{\mu}^{-1}$ is invertible, so $( D_{\mu}Q
D_{\mu}^{-1}, Q, T)$ is a triple of invertible matrices which may
act on $(D_{\mu}, N')$.  We obtain:
\[ ( D_{\mu}Q D_{\mu}^{-1}, Q, T) \cdot (D_{\mu}, N') =
(D_{\mu}Q D_{\mu}^{-1} \cdot D_{\mu} \cdot Q^{-1}, QN' T^{-1}) =
(D_{\mu}, QN' T^{-1}).
\]

That is, the action above using a $\mu$-admissible matrix fixes the
first term $D_{\mu}$.  Thus, in order to compute invariants of the
orbit of a matrix pair $(M,N)$, it is sufficient to reduce the
problem to the orbit of pairs $(D_{\mu}, N')$ where $D_{\mu}$ is
fixed.  That is, we need only study the orbit of the action on the
right term $N'$:
\[ (Q,T) \cdot N' = QN'T^{-1} \]
where $Q$ is a $\mu$-admissible matrix.

Our plan is as follows. Since, given an arbitrary matrix pair $(M,N)
\in M_{r}({\cal F})^2$ we may find a pair $(D_{\mu}, N')$, in the
orbit of $(M,N)$, we will then find, in the orbit of $N'$ under the
$\mu$-admissible action above, a special matrix form $N^*$ called
{\em $\mu$-generic}, from which we may determine an ${\mathbb
R}$-valued \LR\ filling. We shall provide a formula, based on the
orders of determinants of the $\mu$-generic matrix $N^*$ to compute
the filling.  In order to make these definitions precise, we will
require the following notation and definitions:

\begin{df}
 Let $I$, $J$, and $H$ be subsets of $ \{ 1,2,\ldots, r \}$ of length
$k$, written as $I = (i_{1}, i_{2}, \ldots , i_{k})$,  where $1 \leq
i_{1} < i_{2} < \cdots < i_{k} \leq r$, and similarly for $J$ and
$H$. We call such sets {\em index sets}.  (Note: $I,J$, and $H$ do
{\em not} denote partitions.) Let $I \subseteq H$ denote the
condition that $i_{s} \leq h_{s}$ for $1 \leq s \leq k$. Given an $r
\times r$ matrix $W$, let \label{det-def}
\[ W_{IJ}=W\left( \begin{array}{cccc} i_{i} & i_{2} & \cdots & i_{k} \\
j_{1} & j_{2} & \cdots & j_{k} \end{array} \right) \] denote the $k
\times k$ minor of $W$ using rows $I$ and columns $J$ (that is, the
determinant of this submatrix).  \label{mat-def} Let us extend the
definition of $\|a\|$ to square matrices, so that if $B$ is any
square matrix, $\| B\|$ will denote $\| \det(B) \|$. Also, given an
${\mathbb R}$-partition $\mu = (\mu_{1}, \ldots , \mu_{r})$, let
$\mu_{I}$ denote the ${\mathbb R}$-partition $\mu_{I}=(\mu_{i_{1}},
\mu_{i_{2}}, \ldots, \mu_{i_{k}})$, and let $|\mu_{I}|= \mu_{i_{1}}
+ \mu_{i_{2}} + \cdots + \mu_{i_{k}}$.
\end{df}

In \cite{lrcon} the theory of $\mu$-generic matrices was presented
in the context of discrete valuation rings.  While most of what was
developed there works for ${\mathbb R}$-valuation rings with few, if
any, changes, we will need more general results in later sections.
So, we present here the following technical lemma:

\begin{lem} (a) Given any lower-triangular matrix $Q_{L} \in
GL_{r}(R)$, there exists an upper-triangular matrix $Q_{U} \in
GL_{r}(R)$ such that we may find a lower-triangular
$\widehat{Q_{L}}$ and an upper-triangular $\widehat{Q_{U}}$ such
that
\[ Q_{U} Q_{L} = \widehat{Q_{L}}\widehat{Q_{U}} = Q. \]

\medskip

(b) Given any upper-triangular matrix $\widehat{Q_{U}} \in
GL_{r}(R)$ there exists a lower-triangular matrix $\widehat{Q_{L}}
\in GL_{r}(R)$ such that we may find an upper-triangular $Q_{U}$ and
a lower-triangular $Q_{L}$ such that
\[ \widehat{Q_{L}}\widehat{Q_{U}} =  Q_{U} Q_{L} =Q. \]
We may choose $\widehat{Q_{L}}$ to be $\mu$-admissible, in which
case the product $Q$ will be $\mu$-admissible as well.

\medskip

(c) Suppose that $S \in M_{r}({\cal F})$ of full rank is given.
Then, we may choose $Q \in GL_{r}(R)$, obtained from either (a) or
(b) above, so that, for any index sets $I,H,J$ of length $k$, $1
\leq k \leq r$, such that $I \subseteq H \subseteq J$, we have
\begin{equation} \| (QS)_{IJ} \| \leq \| (QS)_{HJ} \|, \label{row} \end{equation}
 and, in the case
that $Q$ is $\mu$-admissible, \begin{equation}
 \| (QS)_{HJ} \| \leq
\| (QS)_{IJ} \| + | \mu_{I} | - | \mu_{H} |. \label{mu-gap}
\end{equation} \label{row-ineq}
\end{lem}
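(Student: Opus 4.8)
The plan is to build the matrix $Q$ explicitly by a left-to-right column reduction on $S$, and then verify that the resulting bounds are preserved under the triangular factorizations guaranteed by parts (a) and (b). For part (c), I would first treat the case of a lower-triangular $Q$ (the one coming from (a)). Starting with $S$, sweep through the columns from left to right; at stage $j$, use row operations \emph{from below} (i.e. left multiplication by an elementary lower-triangular matrix over $R$) to clear out, or rather to maximize the order of, the entries in column $j$ below the ``pivot'' position, without disturbing the columns already processed. Since we only ever add a multiple of a higher-indexed row to a lower-indexed one — wait, for lower-triangular $Q$ we add multiples of lower-indexed rows to higher-indexed rows — this does not destroy the work done on earlier columns. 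The key point is that after this reduction, for any column set $J$, the minor $(QS)_{IJ}$ of smallest order among all row sets $I$ of the given size is achieved by the ``greedy'' row set, and this greedy set is $\subseteq$-minimal; this is what yields \eqref{row}. Concretely, $\|(QS)_{IJ}\|$, as a function of $I$ with $J$ fixed, is (weakly) increasing in the componentwise order on index sets, which is exactly inequality \eqref{row} once we observe $I\subseteq H$.

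**The admissibility bound.** For \eqref{mu-gap}, I would use the hypothesis that $Q$ (equivalently $\widehat{Q_L}$, which we are free to choose $\mu$-admissible by part (b)) satisfies $\|q_{ij}\| \ge \mu_j - \mu_i$ below the diagonal. The way to extract the bound is to compare the minor $(QS)_{HJ}$ to $(QS)_{IJ}$ by writing $QS = (Q D_\mu^{-1})(D_\mu S)$ — no, more cleanly: factor through $D_\mu$ so that $D_\mu Q D_\mu^{-1}$ is over $R$, hence every minor of $D_\mu Q D_\mu^{-1}$ has nonnegative order. Expanding $(QS)_{HJ}$ via Cauchy–Binet in terms of the rows of $Q$ and the minors of $S$, and tracking orders using $\|q_{ij}\|\ge \mu_j-\mu_i$, one finds that each term contributing to $(QS)_{HJ}$ has order at least $(\text{order of a corresponding term in }(QS)_{IJ}) + |\mu_I| - |\mu_H|$, because passing from row set $H$ to the smaller row set $I$ ``costs'' at most $|\mu_H| - |\mu_I|$ in the $D_\mu$-conjugated picture. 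Combining this with \eqref{row} (which gives the matching lower term is actually attained) pins down \eqref{mu-gap}.

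**The factorization bookkeeping.** Parts (a) and (b) are essentially the statement that $GL_r(R)$ admits both orders of triangular factorization — an $LU$ and a $UL$ decomposition — and this follows from the fact that $R$ is a local ring (every element of $R$ is either a unit or has positive order), so Gaussian elimination with the standard pivot choices never requires row or column swaps. I would prove (a) by: given lower-triangular $Q_L$, set $Q_U = I$ is too weak; instead, choose $Q_U$ so that $Q_U Q_L$ has its $LU$-factorization of the desired shape — the point is that \emph{any} $Q \in GL_r(R)$ factors as $\widehat{Q_L}\widehat{Q_U}$ with both factors in $GL_r(R)$, and separately as $Q_U Q_L$, so one just needs to produce a single $Q$ that is simultaneously a product of the given $Q_L$ with an upper-triangular matrix and also admits the reversed factorization; taking $Q$ to be the $Q$ built in the column-reduction above (which we can arrange to be a product of $Q_L$-type and $Q_U$-type elementaries in either order) does it. The claim that $\widehat{Q_L}$ may be taken $\mu$-admissible, and that this forces $Q$ to be $\mu$-admissible, follows from the earlier lemma that $\|\widehat{q_L}_{ij}\| \ge \mu_j - \mu_i$ suffices for admissibility, together with the fact that $\widehat{Q_U}$ (upper-triangular) is automatically $\mu$-admissible and admissibility is closed under products.

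**Main obstacle.** I expect the real work to be in part (c), specifically in making the Cauchy–Binet order estimate for \eqref{mu-gap} rigorous: one must show not merely that \emph{some} term in the expansion of $(QS)_{HJ}$ has the right order, but that the \emph{leading} (lowest-order) term does not cancel — equivalently, that the minimum order in the expansion is actually attained by $\det$. The standard device is to show that the reduction producing $Q$ can be arranged so that the lowest-order contribution to each relevant minor comes from a \emph{unique} permutation term (a ``genericity after reduction'' statement), which is where the freedom in choosing $Q$ — and the full-rank hypothesis on $S$ — gets used. Managing the dependence of this uniqueness on the three nested index sets $I \subseteq H \subseteq J$ simultaneously is the delicate bookkeeping; everything else is local-ring linear algebra.
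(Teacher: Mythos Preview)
Your treatment of parts (a) and (b) contains a genuine error that undermines the rest of the argument. You assert that ``any $Q \in GL_r(R)$ factors as $\widehat{Q_L}\widehat{Q_U}$'' and that this ``follows from the fact that $R$ is a local ring \ldots\ so Gaussian elimination with the standard pivot choices never requires row or column swaps.'' This is false: take $Q = \begin{pmatrix} t & 1 \\ 1 & 0 \end{pmatrix} \in GL_2(R)$. Its $(1,1)$ principal minor is $t$, not a unit, and any attempted $LU$ factorization forces $l_{21} = t^{-1} \notin R$. Locality of $R$ does not prevent this; what matters is whether the leading principal minors of $Q$ are units in $R$, and there is no reason they should be for an arbitrary $Q \in GL_r(R)$.

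The paper's point in (a) is precisely that you must \emph{choose} $Q_U$ (given $Q_L$) so that the principal minors of $Q_U Q_L$ become units --- this is a genericity condition on $Q_U$, phrased as requiring the images of its entries in the residue field to avoid a certain variety. Only then does the $LU$ factorization $\widehat{Q_L}\widehat{Q_U}$ exist over $R$, by the explicit quotient-of-minors formulas in Gantmacher. Part (b) is symmetric. The whole purpose of (a) and (b) is to produce a \emph{single} $Q$ that simultaneously admits both a $UL$ and an $LU$ factorization, and this is not automatic.

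This matters for (c) because the paper's proof of the two inequalities uses the two factorizations of the \emph{same} $Q$ in two separate Cauchy--Binet expansions: the $Q_U Q_L$ form gives $\|(QS)_{IJ}\| = \min_{I \subseteq W}\|(Q_L S)_{WJ}\|$ (genericity of $Q_U$ kills cancellation), from which \eqref{row} follows by comparing index sets; the $\widehat{Q_L}\widehat{Q_U}$ form, with $\widehat{Q_L} = D_\mu^{-1}\widehat{Q_L}_0 D_\mu$ and $\widehat{Q_L}_0$ generic, gives $\|(\widehat{Q_L})_{HW}\| = |\mu_W| - |\mu_H|$ exactly, and a second Cauchy--Binet yields \eqref{mu-gap}. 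Your column-reduction scheme builds only one of these structures, and your plan to derive \eqref{mu-gap} by ``tracking orders using $\|q_{ij}\| \ge \mu_j - \mu_i$'' gives only an inequality on each Cauchy--Binet term, not the equality you need to identify the minimum. You correctly flag non-cancellation as the crux, but without the dual factorization you have no mechanism to enforce it for both inequalities at once.
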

\begin{proof}
We begin with (a).  Suppose we are given a lower-triangular $Q_{L}
\in GL_{r}(R)$.  For any upper-triangular $Q_{U}$, the necessary
condition that the product
\[ Q= Q_{U} Q_{L} \]
possess a ```$UL$" decomposition (with entries over ${\cal F}$, the
quotient field of the ring $R$) of the form
$\widehat{Q_{L}}\widehat{Q_{U}}$ is that for all principal minors
$(Q_{U}Q_{L})_{J_{k}J_{k}}$, we have \[ (Q_{U}Q_{L})_{J_{k}J_{k}}
\neq 0,
\] where $J_{k}$ is an index set of the form $J_{k} = (1, \dots, k)$, for
each $k$, $1 \leq k \leq r$ (see~\cite{Gant}, pp.\ 35-36). We can,
in fact, choose $Q_{U}$ so that not only is this condition met, but
so that both matrices $\widehat{Q_{L}}$ and $\widehat{Q_{U}}$ are in
$GL_{r}(R)$. This is accomplished, again by appeal to the result
in~\cite{Gant}, by noting that the entries in $\widehat{Q_{L}}$ and
$\widehat{Q_{U}}$ are quotients of determinants in the product
$Q=Q_{U}Q_{L}$, whose denominators are principal minors of $Q$.
Choosing $Q_{U}$ so that the principal minors are not only non-zero,
but are actually units in $R$ themselves (that is, so that $ \|
(Q_{U}Q_{L})_{J_{k}J_{k}} \| = 0$) amounts to choosing $Q_{U}$ so
that the images in the residue field of ${\cal F}$ of entries of
$Q_{U}$ lie {\em outside} a variety defined by certain polynomials
in the images in the residue field of entries $Q_{U}$.  Thus, the
existence of $\widehat{Q_{L}}$ and $\widehat{Q_{U}}$ is proved.  The
matrix $\widehat{Q_{U}}$ is, like any upper-triangular matrix,
necessarily $\mu$-admissible, and since the product $Q_{U}Q_{L}$ is
$\mu$-admissible, so must be $\widehat{Q_{L}}$.

Case (b) is proved in essentially the same way.  Given any matrix $Q
= \widehat{Q_{L}}\widehat{Q_{U}}$, the necessary condition that an
atypical ``UL" decomposition exist and be defined over $R$ is only
that the principal minors be units in $R$ (this statement requires
only certain formal modifications of the proof of the ``$LU$"
decomposition in~\cite{Gant}).  We may, in fact, choose
$\widehat{Q_{L}}$ to be $\mu$-admissible, since the existence of the
decomposition will be implied by an appropriate choice of units on
the diagonal of $\widehat{Q_{L}}$, which are unaffected by the
requirement of $\mu$-admissibility.

The upshot of this is that given matrices $Q_{L}$ or
$\widehat{Q_{U}}$, which we may assume to be $\mu$-admissible if we
choose, we can multiply either on the left so that the resulting
product ($Q_{U}Q_{L}$ or $\widehat{Q_{L}}\widehat{Q_{U}}$) possess
both a ``$LU$" and a ``$UL$" decomposition, and such that the
product is is $\mu$-admissible, if we choose. To prove (c), then,
let us choose a matrix $S \in M_{r}(R)$ of full rank, and let us fix
a choice of index sets $I,H,J$ of length $k$, where $1 \leq k \leq
r$. For each such choice, we will argue that whenever $I \subseteq H
\subseteq J$, implying Inequalities~\ref{row} and~\ref{mu-gap}
amounts to choosing $Q$, possessing ``$UL$" and ``$LU$"
decompositions, to be sufficiently generic so that the image in the
residue field of its entries lies outside of a variety defined by
the images of entries of $S$.  In particular, we may ensure we can
choose $Q$ so that $Q = Q_{U}Q_{L}$, and the upper-triangular
$Q_{U}$ satisfies
\[ \| (Q_{U})_{IW} \| = 0,\]
whenever $I \subseteq W$ (that is, $I \subseteq W$ will imply the
minor is a {\em unit} in $R$). Note that
\[ I \nsubseteq W \quad \hbox{implies} \quad (Q_{U})_{IW}=0, \]
since $Q_{U}$ is upper-triangular.

Let us begin with Inequality~\ref{row}.  We have, by the
Cauchy-Binet formula:
\begin{align*} \| (QS)_{IJ} \| &=  \| (Q_{U}Q_{L}S)_{IJ} \| \\
& = \left\| \sum_{I \subset W}(Q_{U})_{IW}(Q_{L}S)_{WJ} \right\| \\
\intertext{By a generic choice of $Q_{U}$, there can be no
``catastrophic cancelation" in the sum, so the above must be:}
 \| (QS)_{IJ} \| &
= \left\| \min_{I \subseteq W}(Q_{U})_{IW}(Q_{L}S)_{WJ} \right\| \\
& =\left\| \min_{I \subseteq W}(Q_{L}S)_{WJ} \right\| \\
& \leq \left\| \min_{H \subseteq W}(Q_{L}S)_{WJ} \right\| \\
& = \left\| \min_{H \subseteq W}(Q_{U})_{HW}(Q_{L}S)_{WJ} \right\| \\
& = \| (QS)_{HJ} \|.
\end{align*}
So Inequality~\ref{row} is proved.  Inequality~\ref{mu-gap} is
proved similarly. First note that, given a $\mu$-admissible
$Q=\widehat{Q_{L}}\widehat{Q_{U}}$, that $\widehat{Q_{L}}$ is
$\mu$-admissible as well, so that
\[ D_{\mu}\widehat{Q_{L}}D_{\mu}^{-1} = \widehat{Q_{L}}_{0} \in
GL_{r}(R). \] This implies
\[\widehat{Q_{L}} =D_{\mu}^{-1}\widehat{Q_{L}}_{0}D_{\mu}. \]
We may choose $\widehat{Q_{L}}$, and hence $\widehat{Q_{L}}_{0}$, to
be sufficiently generic so that for all index sets $I$ and $W$ of
length $k$, such that whenever $W \subseteq H$, we have
\[ \| (\widehat{Q_{L}}_{0})_{HW} \| = 0. \]
But then
\[ \| (\widehat{Q_{L}})_{HW} \| =
\|(D_{\mu}^{-1}\widehat{Q_{L}}_{0}D_{\mu})_{HW} \| =
\|(D_{\mu}^{-1})_{HH}(\widehat{Q_{L}}_{0})_{HW}(D_{\mu})_{WW} \|  =
|\mu_{W} | - | \mu_{H} |.
\]

But then, again using a sufficiently generic choice of
$\widehat{Q_{L}}$ so there is no cancelation among sums of
determinants:
\begin{align*}\| (QS)_{HJ} \| & = \|
(\widehat{Q_{L}}\widehat{Q_{U}}S)_{HJ} \| \\
& = \left\| \sum_{W \subseteq H}(\widehat{Q_{L}})_{HW}(\widehat{Q_{U}}S)_{WJ} \right\| \\
& = \left\| \min_{W \subseteq H}(\widehat{Q_{L}})_{HW}(\widehat{Q_{U}}S)_{WJ} \right\| \\
& =\left\| \min_{W \subseteq H}(\widehat{Q_{L}}_{0})_{HW}(\widehat{Q_{U}}S)_{WJ} \right\| + |\mu_{W} | - | \mu_{H} |\\
& =\left\| \min_{W \subseteq H}(\widehat{Q_{U}}S)_{WJ} \right\| + |\mu_{W} | - | \mu_{H} |\\
& \leq \left\| \min_{W \subseteq I}(\widehat{Q_{U}}S)_{WJ} \right\|
+ |\mu_{W} | - | \mu_{H} |\\
& =\left( \left\| \min_{W \subseteq I}(\widehat{Q_{U}}S)_{WJ}
\right\|
+ |\mu_{W} | - | \mu_{I}| \right) +  |\mu_{I} | - | \mu_{H}|   \\
& = \|(QS)_{IJ} \|  +  |\mu_{I} | - | \mu_{H}| .  \\
\end{align*}
So the inequality is proved.
\end{proof}

Given an ${\mathbb R}$-partition $\nu= (\nu_1, \dots , \nu_r)$,
recall the notation
\[ \widehat{\nu} = (\widehat{\nu_1},\widehat{\nu_2},\dots, \widehat{\nu_r}) =(\nu_{r}, \nu_{r-1}, \dots, \nu_2, \nu_1), \]
and
\[ D_{\widehat{\nu}} = diag(t^{\nu_{r}}, t^{\nu_{r-1}}, \dots ,
t^{\nu_{1}}). \] If $J=(j_1, j_2, \dots , j_k)$ is an index set of
length $k \leq r$, then we will adopt the notation \[
\widehat{\nu}_{J} = (\widehat{\nu}_{j_{1}}, \widehat{\nu}_{j_{2}},
\dots, \widehat{\nu}_{j_{k}} ) = ( \nu_{r-j_{1}+1}, \nu_{r-j_{2}+1},
\dots , \nu_{r-j_{k}+1}). \]

\begin{df}
Given an ${\mathbb R}$-partition $\nu$, we will say a matrix $T \in
GL_{r}(R)$ is $\widehat{\nu}$-admissible iff
\[D_{\widehat{\nu}}^{-1} T D_{\widehat{\nu}} \in GL_{r}(R). \]
Note that if $T$ is $\widehat{\nu}$-admissible, and $\tau_{ij}$ is
the $(i,j)$-entry of $T$, then whenever $i \geq j$,
\[ \| \tau_{ij} \| \geq \widehat{\nu}_{i} - \widehat{\nu}_{j} =
\nu_{r-i+1} - \nu_{r-j+1}. \]
\end{df}

As before, it is elementary to show that, for invertible matrices,
the above necessary condition for $\widehat{\nu}$-admissibility is
also sufficient.

By the same reasoning as the lemma above, we prove the following:
\begin{cor} (a) Given any lower-triangular matrix $T_{L} \in
GL_{r}(R)$, there exists an upper-triangular matrix $T_{U} \in
GL_{r}(R)$ such that we may find a lower-triangular
$\widehat{T_{L}}$ and an upper-triangular $\widehat{T_{U}}$ such
that
\[ T_{L}T_{U} =\widehat{T_{U}}  \widehat{T_{L}}= T. \]

\medskip

(b) Given any upper-triangular matrix $\widehat{T_{U}} \in
GL_{r}(R)$ there exists a lower-triangular matrix $\widehat{T_{L}}
\in GL_{r}(R)$ such that we may find an upper-triangular $T_{U}$ and
a lower-triangular $T_{L}$ such that
\[ \widehat{T_{U}}\widehat{T_{L}} =  T_{L}T_{U}  =T. \]
We may choose $\widehat{T_{L}}$ to be $\widehat{\nu}$-admissible, in
which case the product $T$ will be $\widehat{\nu}$-admissible as
well.

\medskip

(c) Suppose $T \in GL_{r}(R)$ is  any matrix obtained from either
(a) or (b) above, and that $S \in M_{r}({\cal F})$ of full rank is
given.  Then, we may choose $T$ so that, for any index sets $I,H,J$
of length $k$ such that $I \subseteq H \subseteq J$, we have
\begin{equation} \| (ST)_{IJ} \| \leq \| (ST)_{IH} \|, \label{col} \end{equation}
 and, in the case
that $T$ is $\widehat{\nu}$-admissible, \begin{equation}
 \| (ST)_{IH} \| \leq
\| (ST)_{IJ} \| + | \widehat{\nu}_{J} | - | \widehat{\nu}_{H} |.
\label{nu-gap}
\end{equation}
\label{col-ineq}
\end{cor}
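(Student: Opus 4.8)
The plan is to deduce Corollary~\ref{col-ineq} from Lemma~\ref{row-ineq} by a ``transpose-and-reverse'' change of variables, rather than rerunning the Cauchy--Binet computation (which would also work, with the roles of rows and columns, and of upper- and lower-triangular matrices, interchanged throughout). Let $\Omega$ denote the $r\times r$ anti-identity matrix; conjugation $X\mapsto \Omega X\Omega$ reverses the order of both rows and columns, so it exchanges lower- and upper-triangular matrices and sends $D_{\nu}$ to $D_{\widehat{\nu}}$. For an index set $I=(i_{1}<\cdots<i_{k})$ write $\overline{I}=(r+1-i_{k},\ldots,r+1-i_{1})$. The combinatorial facts I will need are: $\overline{\,\cdot\,}$ is an involution; $I\subseteq H\subseteq J$ holds if and only if $\overline{J}\subseteq\overline{H}\subseteq\overline{I}$; $|\nu_{\overline{J}}|=|\widehat{\nu}_{J}|$ for every index set $J$; and $\|(\Omega X\Omega)_{\overline{A}\,\overline{B}}\|=\|X_{AB}\|$ for every square matrix $X$ (since passing to $\Omega X\Omega$ carries the submatrix with rows $A$, columns $B$ onto the one with rows $\overline{A}$, columns $\overline{B}$, up to sign).

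Given $S$ and $\nu$ as in the statement, I would set $\widetilde{S}:=\Omega S^{T}\Omega$ and apply Lemma~\ref{row-ineq} to $\widetilde{S}$ with $\nu$ in the role of $\mu$. This yields $\widetilde{Q}\in GL_{r}(R)$, produced from part (a) or (b) of the Lemma as a product of a triangular pair, such that for a sufficiently generic choice $\|(\widetilde{Q}\widetilde{S})_{AC}\|\le\|(\widetilde{Q}\widetilde{S})_{BC}\|$ whenever $A\subseteq B\subseteq C$, together with $\|(\widetilde{Q}\widetilde{S})_{BC}\|\le\|(\widetilde{Q}\widetilde{S})_{AC}\|+|\nu_{A}|-|\nu_{B}|$ when $\widetilde{Q}$ is $\nu$-admissible. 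Now put $T:=\Omega\widetilde{Q}^{T}\Omega$. Then $ST=\Omega(\widetilde{Q}\widetilde{S})^{T}\Omega$, so $\|(ST)_{IJ}\|=\|(\widetilde{Q}\widetilde{S})_{\overline{J}\,\overline{I}}\|$ for all $I,J$; substituting $A=\overline{J}$, $B=\overline{H}$, $C=\overline{I}$ and using the containment-reversal fact converts the two displayed inequalities into exactly~(\ref{col}) and~(\ref{nu-gap}), once one notes $|\nu_{\overline{J}}|-|\nu_{\overline{H}}|=|\widehat{\nu}_{J}|-|\widehat{\nu}_{H}|$. Finally, carrying the triangular factorizations of $\widetilde{Q}$ through $X\mapsto\Omega X^{T}\Omega$ (which swaps ``lower'' and ``upper'' and reverses the order of a product) turns $\widetilde{Q}_{U}\widetilde{Q}_{L}=\widehat{\widetilde{Q}_{L}}\,\widehat{\widetilde{Q}_{U}}=\widetilde{Q}$ into the factorizations $T_{L}T_{U}=\widehat{T_{U}}\,\widehat{T_{L}}=T$ claimed in (a) and (b); for (b) one feeds the given upper-triangular $\widehat{T_{U}}$ in as $\Omega\widehat{T_{U}}^{T}\Omega$ and transports the resulting lower-triangular factor back.

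The one point requiring care is the dictionary between the admissibility conditions: since $D_{\widehat{\nu}}=\Omega D_{\nu}\Omega$, one has $D_{\widehat{\nu}}^{-1}T D_{\widehat{\nu}}=\Omega(D_{\nu}^{-1}\widetilde{Q}^{T}D_{\nu})\Omega$, which lies in $GL_{r}(R)$ if and only if $D_{\nu}^{-1}\widetilde{Q}^{T}D_{\nu}$ does, if and only if its transpose $D_{\nu}\widetilde{Q}D_{\nu}^{-1}$ does, i.e.\ if and only if $\widetilde{Q}$ is $\nu$-admissible; the same computation applied to the lower-triangular factor shows that the $\nu$-admissible choice guaranteed by Lemma~\ref{row-ineq}(b) produces a $\widehat{\nu}$-admissible $\widehat{T_{L}}$, hence a $\widehat{\nu}$-admissible $T$. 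With these correspondences fixed, the rest is bookkeeping; the main obstacle is simply keeping the reversal of index-set containments and the row/column swap consistent throughout, since a single misplaced $\overline{\,\cdot\,}$ or transpose would break the argument.
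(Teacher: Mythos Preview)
Your reduction is correct, and it differs from the paper's approach: the paper simply says ``By the same reasoning as the lemma above,'' meaning the Cauchy--Binet argument is to be rerun with the roles of rows and columns, and of $Q$ and $T$, interchanged. Your transpose-and-reverse dictionary $T=\Omega\widetilde{Q}^{T}\Omega$, $\widetilde{S}=\Omega S^{T}\Omega$ instead deduces the Corollary formally from the Lemma, which is tidier and avoids repeating the computation. The trade-off is that the paper's method makes the genericity conditions on the factors of $T$ explicit (they are the column analogues of those on $Q$), whereas in your argument those conditions are obtained only indirectly, by transporting the row conditions through the involution.

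One small imprecision to fix: the map $X\mapsto\Omega X^{T}\Omega$ does \emph{not} swap lower and upper triangular matrices; it preserves each class (transpose swaps them, conjugation by $\Omega$ swaps them back). What matters is that it reverses products, so $\widetilde{Q}=\widetilde{Q}_{U}\widetilde{Q}_{L}$ is carried to $T=(\Omega\widetilde{Q}_{L}^{T}\Omega)(\Omega\widetilde{Q}_{U}^{T}\Omega)$, which is (lower)(upper)$=T_{L}T_{U}$, and similarly $\widehat{\widetilde{Q}_{L}}\widehat{\widetilde{Q}_{U}}$ goes to $\widehat{T_{U}}\widehat{T_{L}}$. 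Your conclusion is right; just correct the parenthetical. The admissibility check is fine: $D_{\widehat{\nu}}^{-1}TD_{\widehat{\nu}}=\Omega D_{\nu}^{-1}\widetilde{Q}^{T}D_{\nu}\Omega$, and invertibility over $R$ is preserved under transpose and under conjugation by $\Omega$, so $\widehat{\nu}$-admissibility of $T$ is equivalent to $\nu$-admissibility of $\widetilde{Q}$.
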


\begin{df}
 Let us call a matrix pair $(D_{\mu}, N^*)$ a {\em $\mu$-generic}
 matrix pair associated to $N \in M_{r}({\cal F})$ with respect to a
partition $\mu$ if $N^*$ is upper-triangular, and we can factor
$N^*$ as
\[ N^* = QNT^{-1},
\] where $Q= Q_{U}Q_{L} = \widehat{Q_{L}} \widehat{Q_{U}}$ is
$\mu$-admissible, $ Q_{U},Q_{L}$ are upper and lower triangular,
respectively, and $Q$ and $T^{-1}$ satisfy
Inequalities~\ref{row},~\ref{mu-gap} of Lemma~\ref{row-ineq} and
Inequality~\ref{col} of Corollary~\ref{col-ineq}.
  We shall simply say $N^*$ is {\em $\mu$-generic} if $N^*$ is a
$\mu$-generic matrix associated to some $N \in M_{r}({\cal F})$.
\end{df}


The proof of the corollary below may be found in \cite{lrcon}.
There, the results are stated for the case that $R$ denotes a
discrete valuation ring whose residue field is characteristic zero.
The results are based, however, on elementary matrix calculations
and carry over to our setting without alteration.

\begin{cor}  Suppose $N^*$ is a $\mu$-generic matrix such that
$inv\,(N^*)=(\nu_{1} \geq \nu_{2} \geq \dots \geq \nu_{r})$.  Then,
if $I_{s} = (1,2, \dots, s)$, $H_{(r-s)} = ((r-s+1), (r-s+2), \dots,
r)$, we have
\[ \| N_{I_{s}H_{(r-s)}}^* \| =
\nu_{r-s+1}+ \nu_{r-s+2} + \dots + \nu_{r} \quad \hbox{and} \quad \|
(D_{\mu}N^*)_{H_{(r-s)}H_{(r-s)}} \| =
\lambda_{r-s+1}+\lambda_{r-s+2}+\dots + \lambda_r,
\] where $\lambda=(\lambda_{1}, \dots, \lambda_r) = inv\,(D_{\mu}N^*)$.\label{row-inv}
\end{cor}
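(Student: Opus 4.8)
The plan is to prove both identities by the same two steps: (i) compute the smallest order attained by any $s\times s$ minor of the relevant matrix using only invariant-factor theory, and (ii) use the defining inequalities of a $\mu$-generic matrix, together with the upper-triangularity of $N^*$, to show that the particular corner minor in the statement attains that minimum. For Step (i) I would record the general fact that if $W\in M_{r}({\cal F})$ has full rank and $inv(W)=\rho$, then
\[ \min_{|I|=|J|=s}\,\|W_{IJ}\| \ = \ \rho_{r-s+1}+\rho_{r-s+2}+\cdots+\rho_{r}. \]
The inequality ``$\geq$'' is immediate from the Interlacing Theorem~\ref{interleaving}: an $s\times s$ submatrix $H$ has $inv(H)=\sigma$ with $\sigma_i\geq\rho_{i+r-s}$, so $\|W_{IJ}\|=\|\det H\|=|\sigma|\geq\rho_{r-s+1}+\cdots+\rho_r$. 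For ``$\leq$'' I would invoke Theorem~\ref{inv-fac}: writing $PWQ^{-1}=D_{\rho}$ with $P,Q\in GL_{r}(R)$ and using the Cauchy--Binet formula, the $R$-submodule of ${\cal F}$ spanned by the $s\times s$ minors of $W$ is unchanged under left or right multiplication by a matrix of $GL_{r}(R)$, hence equals the submodule spanned by the $s\times s$ minors of $D_{\rho}$, namely the principal submodule generated by $t^{\,\rho_{r-s+1}+\cdots+\rho_r}$ (the minimum of $\sum_{j\in J}\rho_j$ over length-$s$ index sets occurs at $J=(r-s+1,\dots,r)$ since $\rho$ is decreasing); as finitely generated $R$-submodules of ${\cal F}$ are principal and hence totally ordered, some minor of $W$ therefore has order exactly $\rho_{r-s+1}+\cdots+\rho_r$. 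Applying this with $(W,\rho)=(N^*,\nu)$ and with $(W,\rho)=(D_{\mu}N^*,\lambda)$ supplies the two asserted values, so it remains only to locate where each minimum is achieved.

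For the first equality: since $N^*$ is upper-triangular, $N^*_{IJ}=0$ unless $I\subseteq J$, so it suffices to compare $N^*_{I_{s}H_{(r-s)}}$ with minors $N^*_{IJ}$ for which $I\subseteq J$. Because $I_{s}=(1,\dots,s)$ is $\subseteq$-minimal and $H_{(r-s)}=(r-s+1,\dots,r)$ is $\subseteq$-maximal among length-$s$ index sets, we have $I_{s}\subseteq I$ and $J\subseteq H_{(r-s)}$, yielding the nested triples $I_{s}\subseteq I\subseteq J$ and $I_{s}\subseteq J\subseteq H_{(r-s)}$. Inequality~\ref{row} of Lemma~\ref{row-ineq} applied to the first gives $\|N^*_{I_{s}J}\|\leq\|N^*_{IJ}\|$, and Inequality~\ref{col} of Corollary~\ref{col-ineq} (which $N^*$ satisfies by the definition of $\mu$-generic) applied to the second gives $\|N^*_{I_{s}H_{(r-s)}}\|\leq\|N^*_{I_{s}J}\|$. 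Chaining these, $N^*_{I_{s}H_{(r-s)}}$ has order at most that of every $s\times s$ minor of $N^*$, hence equal to the minimum $\nu_{r-s+1}+\cdots+\nu_r$ from Step (i).

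For the second equality the wrinkle is that $\|(D_{\mu}N^*)_{IJ}\|=|\mu_I|+\|N^*_{IJ}\|$, where $|\mu_I|$ favours a large row set while Inequality~\ref{row} favours a small one; Inequality~\ref{mu-gap} of Lemma~\ref{row-ineq} is precisely what reconciles them, since applied to the nested triple $I\subseteq J\subseteq J$ (legitimate because $Q$ is $\mu$-admissible) it reads $|\mu_J|+\|N^*_{JJ}\|\leq|\mu_I|+\|N^*_{IJ}\|$ for every $I\subseteq J$, while for $I\not\subseteq J$ the right-hand side is $+\infty$ by upper-triangularity; hence $\min_{|I|=|J|=s}\|(D_{\mu}N^*)_{IJ}\|$ is attained at a principal minor. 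Finally, for any length-$s$ set $J$ one has $J\subseteq H_{(r-s)}$, so Inequality~\ref{col} applied to $J\subseteq J\subseteq H_{(r-s)}$ gives $\|N^*_{J H_{(r-s)}}\|\leq\|N^*_{JJ}\|$, and Inequality~\ref{mu-gap} applied to $J\subseteq H_{(r-s)}\subseteq H_{(r-s)}$ gives $|\mu_{H_{(r-s)}}|+\|N^*_{H_{(r-s)}H_{(r-s)}}\|\leq|\mu_J|+\|N^*_{J H_{(r-s)}}\|$; combining, $\|(D_{\mu}N^*)_{H_{(r-s)}H_{(r-s)}}\|\leq\|(D_{\mu}N^*)_{JJ}\|$ for every $J$, so this principal corner minor realizes the overall minimum, which by Step (i) equals $\lambda_{r-s+1}+\cdots+\lambda_r$.

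The point requiring the most care is the passage in Step (ii) from the $\mu$-generic inequalities — which relate only \emph{nested} index triples $I\subseteq H\subseteq J$ — to a statement about all $s\times s$ minors; this is why every comparison is routed through the $\subseteq$-extremal index sets $(1,\dots,s)$ and $(r-s+1,\dots,r)$, and why the upper-triangularity of $N^*$ (hence of $D_{\mu}N^*$) must be invoked to discard the non-nested minors, which vanish identically. A secondary technicality is to phrase the determinantal-divisor computation in Step (i) in terms of finitely generated $R$-submodules of ${\cal F}$ rather than ideals of $R$, since the entries of $N^*$ and of $D_{\mu}N^*$ need not lie in $R$.
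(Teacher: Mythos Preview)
Your proof is correct. The paper itself does not prove this corollary but simply cites \cite{lrcon}, stating that the argument there ``carries over to our setting without alteration,'' so a line-by-line comparison with the paper's own proof is not possible from the text given.

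That said, your approach is precisely the natural one suggested by the surrounding machinery: the determinantal-divisor identity in Step~(i) is the standard consequence of Theorem~\ref{inv-fac} (and your care in phrasing it via finitely generated $R$-submodules of~${\cal F}$, rather than ideals of~$R$, correctly handles the possibility of negative orders), while Step~(ii) is exactly what the inequalities of Lemma~\ref{row-ineq} and Corollary~\ref{col-ineq} are designed for. Your routing of every comparison through the $\subseteq$-extremal sets $I_s$ and $H_{(r-s)}$, together with upper-triangularity to kill non-nested minors, is the right way to bridge the gap between ``nested-triple'' inequalities and a global minimum, and your use of Inequality~\ref{mu-gap} to reduce the $D_\mu N^*$ problem to principal minors is the key non-obvious step. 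This is almost certainly the same argument as in \cite{lrcon}, and in any case is a complete and self-contained proof.
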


\begin{df}
Suppose that $(D_{\mu},N^*)$ is a fixed $\mu$-generic pair in the
orbit of the given pair $(M,N) \in {\cal M}_{r}({\cal F})^2$, both
of full rank. Let the symbols
\[\left\| \Bigl( i_{1}, \ldots , i_s \Bigr) \right\|, \quad \left\| \Bigl( (i_{1})^{\wedge},
(i_2)^{\wedge}, \ldots , (i_k)^{\wedge}\Bigr) \right\|,
 \] denote, respectively, the order of the minor
 of the $\mu$-generic matrix $N^*$ with rows $i_1, \dots, i_s$, and
 the right-most distinct columns possible.
Secondly, when using the ``\,$^{\wedge}$" symbol, the order of the
minor of $N^*$ whose rows include all rows $1$ through $r$ but with
the rows $i_1, i_2, \ldots, i_k$ omitted, again using the right-most
columns resulting in a square submatrix.
\end{df}

 We omit the dependence of the above notation on the fixed $\mu$-generic
matrix $N^*$.  Our central matrix result is the following
determinantal formula that calculates a L${\mathbb R}$ filling from
a matrix pair.  Most of the proof of this result follows exactly the
argument presented in \cite{lrcon} in the case of a discrete
valuation ring. We will sketch the proof and mention the few cases
where passage to the ${\mathbb R}$-valuation introduces changes.

\begin{thm}
Let $(M,N) \in M_{r}({\cal F})^2$ be a matrix pair, with both
matrices full rank.  Suppose that $N^*$ is a $\mu$-generic matrix
associated to $N$. Let us define a triangular array of integers $\{
k_{ij} \}$, for $1 \leq i \leq r$, and $i \leq j \leq r$, by
declaring
\begin{equation}
k_{1j} + k_{2j} + \dots + k_{ij} =\left\| \Bigl( (j-i)^{\wedge},
 \ldots ,(j-1)^{\wedge} \Bigr) \right\| - \left\| \Bigl(
(j-i+1)^{\wedge}, \ldots ,(j)^{\wedge} \Bigr)
\right\| .\\
\label{seq-def}
\end{equation}
Then, the set $F = \{ k_{ij} : 1 \leq i \leq r, \ i \leq j \leq r
\}$ is an L${\mathbb R}$ filling in L${\mathbb R}(\mu,\nu,
\lambda)$, where $inv\,(M)=\mu$, $inv\,(N) = \nu$, and $inv\,(MN)=
\lambda$. \label{kij-lr}
\end{thm}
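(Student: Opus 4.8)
The plan is to reduce the pair to the normal form $(D_{\mu},N^{*})$ with $N^{*}$ $\mu$-generic, and then verify the four clauses (L${\mathbb R}$1)--(L${\mathbb R}$4) one by one, each time rewriting the clause as an (in)equality among orders of minors of $N^{*}$ and reading it off from the estimates already in hand. By Theorem~\ref{inv-fac} and the reduction discussed after it, replace $(M,N)$ by a pair-equivalent $(D_{\mu},N')$ with $\mu=inv(M)$, and fix a $\mu$-generic $N^{*}=QN'T^{-1}$ in the orbit of $N'$. Since $(D_{\mu},N')$ is pair-equivalent to $(M,N)$ we get $inv(N')=inv(N)=\nu$ and $inv(D_{\mu}N')=inv(MN)=\lambda$; since $N^{*}$ is obtained from $N'$ by left and right multiplication by elements of $GL_{r}(R)$ we get $inv(N^{*})=\nu$; and since $D_{\mu}N^{*}=(D_{\mu}QD_{\mu}^{-1})(D_{\mu}N')T^{-1}$ with both outer factors in $GL_{r}(R)$ (the first by $\mu$-admissibility of $Q$) we get $inv(D_{\mu}N^{*})=\lambda$. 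Comparing orders of determinants gives $|\mu|+|\nu|=|\lambda|$, so $L{\mathbb R}(\mu,\nu,\lambda)$ is defined. Finally, since $D_{\mu}$ is diagonal, $(D_{\mu}N^{*})_{HH}=t^{|\mu_{H}|}(N^{*})_{HH}$ for every index set $H$, so Corollary~\ref{row-inv} translates into the basic identities $\|(N^{*})_{HH}\|=\sum_{a=i}^{r}(\lambda_{a}-\mu_{a})$ for $H=(i,i+1,\dots,r)$, and $\|(N^{*})_{I_{s}H_{(r-s)}}\|=\sum_{a=r-s+1}^{r}\nu_{a}$ for $I_{s}=(1,\dots,s)$ and $H_{(r-s)}=(r-s+1,\dots,r)$.

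Write $P(i,m)$ for the order of the minor of $N^{*}$ obtained by deleting the $i$ consecutive rows $m-i+1,\dots,m$ and keeping the right-most columns forming a square submatrix, with the convention that deleting a ``row $\le 0$'' deletes nothing (so $P(0,m)=\|N^{*}\|=|\nu|$, and for $m<i$ the minor $P(i,m)$ deletes rows $1,\dots,m$ and uses columns $m+1,\dots,r$). Then relation~(\ref{seq-def}) reads $\sigma_{ij}:=k_{1j}+\cdots+k_{ij}=P(i,j-1)-P(i,j)$, so $\{k_{ij}\}$ is well-defined. A two-stage telescoping gives $\sum_{s=i}^{j}k_{is}=P(i-1,j)-P(i,j)$, the cross terms cancelling because $P(i,i-1)=P(i-1,i-1)=\|(N^{*})_{HH}\|$ with $H=(i,\dots,r)$ (both delete exactly rows $1,\dots,i-1$). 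Specializing $j=r$ and using the second basic identity gives $\sum_{s=i}^{r}k_{is}=\nu_{i}$; setting $i=j$ in~(\ref{seq-def}) and using the first gives $\sum_{s=1}^{j}k_{sj}=\sum_{a=j}^{r}(\lambda_{a}-\mu_{a})-\sum_{a=j+1}^{r}(\lambda_{a}-\mu_{a})=\lambda_{j}-\mu_{j}$. These are exactly clause (L${\mathbb R}$1).

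For the remaining clauses, rewrite each via the same relations purely in terms of the $P(i,m)$, obtaining a monotonicity or supermodularity statement for orders of minors of the $\mu$-generic $N^{*}$: clause (L${\mathbb R}$2) becomes $k_{ij}=\bigl(P(i,j-1)-P(i-1,j-1)\bigr)-\bigl(P(i,j)-P(i-1,j)\bigr)\ge 0$ for $i<j$; clause (L${\mathbb R}$3) becomes $\mu_{j}+\sigma_{ij}\le\mu_{j-1}+\sigma_{(i-1)(j-1)}$; and clause (L${\mathbb R}$4), using $\sum_{s=i}^{j}k_{is}=P(i-1,j)-P(i,j)$, becomes $P(i,j+1)+P(i,j)\le P(i-1,j)+P(i+1,j+1)$. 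Each of these is proved just as in~\cite{lrcon}: the row- and column-monotonicity behind (L${\mathbb R}$2) are Inequalities~(\ref{row}) and~(\ref{col}) of Lemma~\ref{row-ineq} and Corollary~\ref{col-ineq}; the $\mu$-weighted gap behind the column-strictness (L${\mathbb R}$3) is Inequality~(\ref{mu-gap}); and the four-minor inequality behind the word condition (L${\mathbb R}$4) is assembled from these together with the Interlacing Theorem~\ref{interleaving}. Throughout one uses that, for a $\mu$-generic matrix, there is no catastrophic cancelation, so the order of a sum of products of minors equals the minimum of their orders. Since all of this is elementary manipulation of minors and of the additive valuation $\|\cdot\|$, it transcribes verbatim from the discrete case; the points needing care in the ${\mathbb R}$-valuated ring ${\cal F}$ are only: (i) $\mu,\nu,\lambda$ may have negative or non-integer parts, so the ``delete rows $\le 0$'' convention must be read as above and the formal identity $P(i,i-1)=P(i-1,i-1)$ verified (both delete exactly rows $1,\dots,i-1$); (ii) the ``order of a sum is the minimum of the orders'' step rests on the sparseness axioms for ${\cal F}$ (so that each minor is an honest element of ${\cal F}$ with a well-defined leading exponent) and on the genericity of $Q$ and $T$, both already built into ``$\mu$-generic''; and (iii) no non-negativity of $\mu,\nu,\lambda$ or of the edge parts $k_{ii}$ is used anywhere, which is exactly why the output is an $L{\mathbb R}$ filling rather than a classical one.

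The step I expect to be the main obstacle is the word condition (L${\mathbb R}$4): unlike (L${\mathbb R}$2), which is a single monotonicity of minors, it is a genuine four-minor supermodular inequality, so extracting it from Inequalities~(\ref{row}),~(\ref{col}),~(\ref{mu-gap}) and the interlacing theorem is the one delicate combinatorial step imported from~\cite{lrcon}; checking that that step uses only additivity of the valuation (plus the sparseness and genericity already secured), and hence survives the passage from the discrete case to ${\cal F}$, is the real content of the promised ``few cases where passage to the ${\mathbb R}$-valuation introduces changes.''
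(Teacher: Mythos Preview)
Your proposal is correct and follows essentially the same approach as the paper: reduce to a $\mu$-generic pair $(D_{\mu},N^{*})$, verify (L${\mathbb R}$1) from Corollary~\ref{row-inv}, and defer (L${\mathbb R}$2)--(L${\mathbb R}$4) to the minor inequalities of Lemma~\ref{row-ineq}, Corollary~\ref{col-ineq}, and the Interlacing Theorem~\ref{interleaving}, exactly as in~\cite{lrcon}.

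One point of emphasis is misplaced, however. You identify (L${\mathbb R}$4) as ``the real content of the promised `few cases where passage to the ${\mathbb R}$-valuation introduces changes'\,''. The paper's sketch does not agree: it states that (L${\mathbb R}$3) and (L${\mathbb R}$4) ``follow exactly as in~\cite{lrcon}'' with no change at all. The case singled out for discussion in the ${\mathbb R}$-valuated setting is (L${\mathbb R}$2), and specifically the distinction between interior parts ($i<j$) and edge parts ($i=j$). For $i<j$ the argument of~\cite{lrcon} still shows that $k_{ij}$ is the difference of two invariant factors of a single submatrix, larger minus smaller, hence non-negative. For $i=j$ the same formula collapses, after cancellation, to
\[
k_{ii}=\|(1,(i+1),\dots,r)\|-\|((i+1),\dots,r)\|,
\]
which over ${\cal F}$ need not be non-negative. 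You do cover this under your point (iii), but it is this edge-part analysis, not the word condition, that the paper treats as the genuinely new feature of the ${\mathbb R}$-valued case.
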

\begin{proof}[{\em sketch}]
 Since our notation for omitted indices in determinants is only to be
used when removing a non-empty increasing sequence of indices, we
will adopt the convention that
\[ \| (p)^{\wedge} , \dots, (q)^{\wedge} \| = \| 1, 2, \dots, r \| \qquad \hbox{if $p>q$.} \]  With this, we can use Equation~\ref{seq-def} above to
define the individual entries $k_{ij}$, according to the formula

\begin{multline}
k_{ij} =\left\| \Bigl( (j-i)^{\wedge}, (j-i+1)^{\wedge}, \ldots ,
(j-1)^{\wedge} \Bigr) \right\| - \left\| \Bigl( (j-i+1)^{\wedge},
\ldots ,(j)^{\wedge} \Bigr)
\right\| \\
- \Biggl( \left\| \Bigl( (j-i+1)^{\wedge}, (j-i+2)^{\wedge}, \ldots
, (j-1)^{\wedge} \Bigr) \right\| - \left\| \Bigl( (j-i+2)^{\wedge},
\ldots , (j)^{\wedge} \Bigr) \right\| \Biggr).\label{kij-def}
\end{multline}

We need to verify that using Equation~\ref{kij-def} to define
$\{k_{ij} \}$ results in a set of real numbers satisfying the
conditions L${\mathbb R}1$, L${\mathbb R}2$, L${\mathbb R}3$, and
L${\mathbb R}4$.  Condition L${\mathbb R}1$ follows quickly from
Corollary~\ref{row-inv} above, and the others follow precisely as in
\cite{lrcon}.  However, it should be noted that in \cite{lrcon} it
was proved that given a $\mu$-generic matrix $N^*$, the
determinantal formulas above defined a \LR-filling associated to a
matrix pair $(M,N)$ over a {\em discrete} valuation ring. Thus, the
filling so constructed would satisfy $k_{ij} \geq 0$ for {\em all}
$i$ and $j$. However, in the case of a ring with ${\mathbb
R}$-valuation, the definition of L${\mathbb R}$-fillings only
requires $k_{ij} \geq 0$ for $i < j$. Let us briefly indicate how to
account for the difference one finds in working over an ${\mathbb
R}$-valuated ring.
 When computing $k_{ij}$, for $i < j$, we find
the matrix argument found in \cite{lrcon} goes through without any
change. It shows that $k_{ij}$ is determined as the {\em difference}
between the orders of two invariant factors of a single submatrix
(the larger minus the smaller).  So, this value must be non-negative
in all cases.

In contrast to this, in the case $i=j$, Equation~\ref{kij-def} above
becomes:
\begin{multline}
k_{ii} =\left\| \Bigl( (i-i)^{\wedge}, (i-i+1)^{\wedge}, \ldots ,
(i-1)^{\wedge} \Bigr) \right\| - \left\| \Bigl( (i-i+1)^{\wedge},
\ldots ,(i)^{\wedge} \Bigr)
\right\| \\
- \Biggl( \left\| \Bigl( (i-i+1)^{\wedge}, (i-i+2)^{\wedge}, \ldots
, (i-1)^{\wedge} \Bigr) \right\| - \left\| \Bigl( (i-i+2)^{\wedge},
\ldots , (i)^{\wedge} \Bigr) \right\| \Biggr) \\
= \left\| \Bigl( (0)^{\wedge}, (1)^{\wedge}, \ldots , (i-1)^{\wedge}
\Bigr) \right\| - \left\| \Bigl( (1)^{\wedge}, \ldots ,(i)^{\wedge}
\Bigr)
\right\| \\
- \Biggl( \left\| \Bigl( (1)^{\wedge}, (2)^{\wedge}, \ldots ,
(i-1)^{\wedge} \Bigr) \right\| - \left\| \Bigl( (2)^{\wedge}, \ldots
, (i)^{\wedge} \Bigr) \right\| \Biggr).
\end{multline}
 In the above expression there is cancelation of some of the terms
 that does not occur when $i < j$.  Consequently, the above, when
 reduced and written positively, becomes:

\[ k_{ii} = \| \left( 1, (i+1) , \dots , r \right) \| - \| \left(
(i+1), \dots, r \right) \|. \]

 In the above, there
is no {\em a priori} reason that this value cannot be negative, when
working over a ring $R$ with an ${\mathbb R}$-valuation.

In general, if the ring $R$ only possesses a discrete valuation,
then the parts $k_{ij}$, for $i < j$ will necessarily  be
non-negative integers. If for any $i$ we had $k_{ii} < 0$, then by
the word condition L${\mathbb R}4$ we would have $\nu_r = k_{rr}
\leq k_{ii} < 0$, so there would necessarily be a part of the
partition $\nu$ that was negative. Thus, if the partitions $\mu$ and
$\nu$, which appear as the invariant partitions of the matrices $M$
and $N$, are comprised of non-negative integers, then all the
$k_{ii} \geq 0$. Lastly, since
\[ k_{ii} + k_{i, i+1} + \dots + k_{ir} = \mu_{i}, \]
then $k_{ii}$ must also be an integer.

The proofs that conditions L${\mathbb R}$3 and L${\mathbb R}$4 hold
follow exactly as in \cite{lrcon}.  Note that the arguments in
\cite{lrcon} require the interleaving condition among invariant
factors of submatrices, which is the content of
Theorem~\ref{interleaving}.
\end{proof}

The following corollary, proposition, and theorem appeared in
\cite{lrcon}, and the proofs there apply here without alteration.

\begin{cor} With $k_{ij}$ defined by Equation~\ref{kij-def} for
all $1 \leq i \leq r$ and $i \leq j \leq r$, we have
\begin{enumerate}
\item $\sum_{\beta = j}^{l}  (k_{1 \beta} + k_{2 \beta} + \cdots + k_{i \beta}) =
\| (j-i)^{\wedge} \dots (j-1)^{\wedge} \| - \| (l - i + 1)^{\wedge}
\dots (l)^{\wedge} \|,$ for $j \leq l$.
\item $k_{ii} + k_{i,(i+1)} + \cdots + k_{ij} = \| (j-i+2)^{\wedge} \dots (j)^{\wedge} \| - \| (j-i+1)^{\wedge} \dots (j)^{\wedge} \|$.
    \end{enumerate}\label{word-lem}
\end{cor}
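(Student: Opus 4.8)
The plan is to obtain both identities by pure telescoping, using only the defining Equation~\ref{seq-def} (equivalently Equation~\ref{kij-def}); no further matrix theory is needed. To keep the bookkeeping light I will write $\langle p,q\rangle$ for $\|(p^{\wedge},(p+1)^{\wedge},\ldots,q^{\wedge})\|$, the order of the minor of $N^*$ obtained by omitting rows $p,p+1,\ldots,q$ and taking the right-most admissible columns, and I will use the conventions already in force in the proof of Theorem~\ref{kij-lr}: $\langle p,q\rangle=\|(1,2,\ldots,r)\|$ when $p>q$, and omitting the nonexistent index $0$ does nothing, so $\langle 0,q\rangle=\langle 1,q\rangle$. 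In this notation Equation~\ref{seq-def} reads $k_{1j}+\cdots+k_{ij}=\langle j-i,\,j-1\rangle-\langle j-i+1,\,j\rangle$, and Equation~\ref{kij-def} expands to $k_{ij}=\langle j-i,\,j-1\rangle-\langle j-i+1,\,j\rangle-\langle j-i+1,\,j-1\rangle+\langle j-i+2,\,j\rangle$.

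For part (1) I would fix $i$ and sum Equation~\ref{seq-def} over $\beta=j,j+1,\ldots,l$. Since $\langle \beta-i+1,\beta\rangle=\langle(\beta+1)-i,\,(\beta+1)-1\rangle$, the summands cancel in consecutive pairs and the sum collapses to $\langle j-i,\,j-1\rangle-\langle l-i+1,\,l\rangle$, which is exactly the asserted right-hand side.

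For part (2) I would fix $i$ and set $g(\beta)=\langle \beta-i+1,\beta\rangle-\langle \beta-i+2,\beta\rangle$. Regrouping the four terms of Equation~\ref{kij-def} (with $\beta$ in place of $j$) gives
\[ k_{i\beta}=\bigl(\langle \beta-i,\beta-1\rangle-\langle \beta-i+1,\beta-1\rangle\bigr)-\bigl(\langle \beta-i+1,\beta\rangle-\langle \beta-i+2,\beta\rangle\bigr)=g(\beta-1)-g(\beta), \]
so that $\sum_{\beta=i}^{j}k_{i\beta}=g(i-1)-g(j)$, another telescoping sum. The endpoint value is $g(i-1)=\langle 0,i-1\rangle-\langle 1,i-1\rangle=0$ by the boundary convention above, whence $k_{ii}+k_{i,i+1}+\cdots+k_{ij}=-g(j)=\langle j-i+2,j\rangle-\langle j-i+1,j\rangle$, as claimed; taking $j=i$ recovers the formula $k_{ii}=\|(1,(i+1),\ldots,r)\|-\|((i+1),\ldots,r)\|$ quoted in the proof of Theorem~\ref{kij-lr}.

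I do not expect a genuine obstacle here: the whole argument is bookkeeping with telescoping sums. The only spot that deserves a moment's care is the evaluation $g(i-1)=0$, i.e. the behaviour of the index-omission notation at the degenerate endpoint $\beta=i$ where the index $0$ appears, and this is handled exactly as in the reduction already carried out in the sketch of Theorem~\ref{kij-lr}.
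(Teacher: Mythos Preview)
Your argument is correct: both identities are indeed straightforward telescoping consequences of Equations~\ref{seq-def} and~\ref{kij-def}, and your handling of the boundary case $g(i-1)=0$ via $\langle 0,i-1\rangle=\langle 1,i-1\rangle$ is exactly the ``cancellation'' the paper invokes when reducing $k_{ii}$ in the sketch of Theorem~\ref{kij-lr}. The paper itself does not reprove this corollary here but defers to~\cite{lrcon}; your telescoping is the natural proof and presumably the one given there.
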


\begin{prop}
Let $I$ and $J$ be index sets of length $k$, and let $N$ and
$\widehat{N}$ be $r \times r$ $\mu$-generic matrices.  Suppose there
exist $\mu$-admissible matrices $Q$ and $T$ such that
\[ QNT^{-1} = \widehat{N} . \]
Then \label{big-prop}
\[ \|N_{IJ}\| = \|\widehat{N}_{IJ}\|.\]
\end{prop}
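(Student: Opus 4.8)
The plan is to show that the quantity $\|N_{IJ}\|$, for a $\mu$-generic matrix $N$, is an invariant of the pair-equivalence orbit — or more precisely, is unchanged under the $\mu$-admissible action that preserves $\mu$-genericity. The central tool will be Lemma~\ref{row-ineq} and Corollary~\ref{col-ineq}, which pin down the orders of minors of a $\mu$-generic matrix in terms of those of $N$ together with the partition data. The idea is that both $N$ and $\widehat N$ arise from the same underlying matrix by applying ``generic'' factored transformations, and the genericity inequalities force $\|N_{IJ}\|$ and $\|\widehat N_{IJ}\|$ to equal the \emph{same} expression built out of orders of minors of the common source matrix. So the strategy is: (1) reduce to comparing each of $\|N_{IJ}\|$ and $\|\widehat N_{IJ}\|$ against a common reference; (2) use Cauchy–Binet expansions controlled by the no-cancelation genericity to evaluate each side; (3) observe the two evaluations coincide.

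More concretely, first I would use the hypothesis $QNT^{-1} = \widehat N$ to write $\widehat N = QNT^{-1}$ with $Q$ $\mu$-admissible and (by absorbing the factored structure from Lemma~\ref{row-ineq}(a),(b) and Corollary~\ref{col-ineq}) $Q = Q_U Q_L = \widehat{Q_L}\widehat{Q_U}$, $T^{-1}$ likewise factored, all chosen generically. Then I would expand $\widehat N_{IJ} = (Q N T^{-1})_{IJ}$ by Cauchy–Binet twice — once on the left against $Q$ and once on the right against $T^{-1}$ — obtaining a double sum over intermediate index sets $W, X$ of products $Q_{IW} N_{WX} (T^{-1})_{XJ}$. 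Because $Q$ and $T^{-1}$ were chosen sufficiently generic (residue-field entries outside a proper subvariety), there is no catastrophic cancelation, so $\|\widehat N_{IJ}\|$ equals the minimum over $W,X$ of $\|Q_{IW}\| + \|N_{WX}\| + \|(T^{-1})_{XJ}\|$. The triangular factorizations of $Q$ and $T^{-1}$, and the admissibility bounds on their entry-orders (the $\mu_j - \mu_i$ lower bounds, tight generically), then let me evaluate this minimum in closed form. The key point is that exactly the same computation, run with $Q = \mathrm{id}$, $T = \mathrm{id}$ — i.e. applied to $N$ itself — produces the identical minimum, because the genericity inequalities~\ref{row},~\ref{mu-gap},~\ref{col},~\ref{nu-gap} that $N$ already satisfies (being $\mu$-generic) force $\|N_{IJ}\|$ to be the minimal term of the corresponding expansion as well.

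The step I expect to be the main obstacle is showing that the extra ``slack'' contributed by $Q$ and $T^{-1}$ in the double Cauchy–Binet sum cancels exactly — i.e. that $\min_{W,X}\big(\|Q_{IW}\| + \|N_{WX}\| + \|(T^{-1})_{XJ}\|\big) = \|N_{IJ}\|$, not just $\leq$ or $\geq$. One inequality should be easy: taking $W = I$, $X = J$ gives $\|Q_{II}\| + \|N_{IJ}\| + \|(T^{-1})_{JJ}\| = \|N_{IJ}\|$ since the principal minors of the factored generic matrices are units (order zero by the construction in Lemma~\ref{row-ineq}), so $\|\widehat N_{IJ}\| \leq \|N_{IJ}\|$. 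For the reverse I would run the symmetric argument with the roles of $N$ and $\widehat N$ exchanged — since $\widehat N$ is also $\mu$-generic and $N = Q^{-1}\widehat N T$, the same bound gives $\|N_{IJ}\| \leq \|\widehat N_{IJ}\|$ — provided one checks that $Q^{-1}$ and $T$ (and their relevant factorizations) remain $\mu$-admissible and generic, which follows because admissibility is closed under inversion and the genericity is an open condition preserved by the construction. Combining the two inequalities gives $\|N_{IJ}\| = \|\widehat N_{IJ}\|$. I would also need to verify that the genericity of $Q, T$ can be arranged simultaneously with that of the pair producing $\widehat N$ from $N$ — a standard ``finite intersection of dense opens is dense'' argument over the residue field, exactly as in the proof of Lemma~\ref{row-ineq}.
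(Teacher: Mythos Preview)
There is a genuine gap. Your argument hinges on two properties of the transformation $(Q,T^{-1})$: (i) that the Cauchy--Binet expansion of $(QNT^{-1})_{IJ}$ has no catastrophic cancellation, so that $\|\widehat N_{IJ}\|$ equals the minimum of the summand orders; and (ii) that $\|Q_{II}\| = \|(T^{-1})_{JJ}\| = 0$ for every index set $I,J$. Both of these are features of the \emph{constructed} matrices in Lemma~\ref{row-ineq} and Corollary~\ref{col-ineq}, not of an arbitrary $\mu$-admissible matrix. The hypothesis hands you specific $Q,T$; Lemma~\ref{row-ineq} does not say an arbitrary $\mu$-admissible $Q$ admits the factored generic form---it builds such a $Q$ starting from a given triangular factor. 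Concretely, for $\mu=(0,0)$ the matrix $Q=\begin{pmatrix} t & 1\\ -1 & 0\end{pmatrix}$ is $\mu$-admissible with $\|Q_{\{1\}\{1\}}\|=1\neq 0$, so (ii) already fails. Your attempted fix (``arrange genericity simultaneously\ldots intersection of dense opens'') does not apply: once $N$ and $\widehat N$ are fixed, the pairs $(Q,T)$ with $QNT^{-1}=\widehat N$ form a single coset of the stabilizer of $N$, not a dense open set in $GL_r(R)^2$, and there is no reason this coset meets the generic locus.

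What the argument actually needs is to exploit the $\mu$-genericity of $N$ and $\widehat N$ \emph{themselves}---that is, the inequalities~\ref{row}, \ref{mu-gap}, and~\ref{col} they satisfy by definition---together with the order bound $\|Q_{IW}\|\geq \max(0,\,|\mu_W|-|\mu_I|)$ that follows from $\mu$-admissibility of the given $Q$ (and the analogous bound for $T$). With these one shows directly that every term $\|Q_{IW}\|+\|N_{WX}\|+\|(T^{-1})_{XJ}\|$ in the Cauchy--Binet sum is bounded below by $\|N_{IJ}\|$, which gives $\|\widehat N_{IJ}\|\geq \|N_{IJ}\|$ \emph{without} any no-cancellation hypothesis; the reverse inequality then does follow by your symmetry idea, since $Q^{-1},T$ are again $\mu$-admissible. (The paper itself defers the proof to~\cite{lrcon}, where this is carried out.) In short: your overall architecture---Cauchy--Binet plus symmetry---is right, but the genericity of the given $Q,T$ is not available, and the inequalities built into the definition of $\mu$-generic must do the work instead.
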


\begin{thm}[Uniqueness]If $(M,N)$ is pair equivalent to to $(M', N')$, then the
L${\mathbb R}$-fillings determined by both pairs are the same.  That
is, pairs in the same $GL_{r}(R)^3$ orbit yield identical L${\mathbb
R}$-fillings.
\end{thm}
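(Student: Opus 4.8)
The plan is to reduce the statement to the minor-invariance recorded in Proposition~\ref{big-prop}. Since pair equivalence preserves the invariant partition of each slot, $(M,N)$ being pair equivalent to $(M',N')$ forces $inv(M)=inv(M')$; call this common ${\mathbb R}$-partition $\mu$. By construction, the L${\mathbb R}$-filling of a pair is obtained by (i) passing, via Theorem~\ref{inv-fac}, to a representative of the orbit of the form $(D_{\mu},N')$; (ii) choosing a $\mu$-generic matrix $N^*$ in the orbit of $N'$ under $N'\mapsto QN'T^{-1}$ with $Q$ $\mu$-admissible and $T\in GL_r(R)$ --- this being precisely the stabilizer of $D_{\mu}$ inside $GL_r(R)^3$ --- and then reading $\{ k_{ij} \}$ off from minor orders of $N^*$ via Equation~\ref{kij-def}. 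Two reductions $(D_{\mu},N')$ and $(D_{\mu},N'')$ of one and the same orbit differ by exactly such a pair $(Q,T)$ with $Q$ $\mu$-admissible, and $\mu$-admissible matrices are closed under products and inverses (the condition being just $D_{\mu}(\,\cdot\,)D_{\mu}^{-1}\in GL_r(R)$), so a $\mu$-generic matrix for $(D_{\mu},N')$ is again $\mu$-generic for $(D_{\mu},N'')$ and conversely. Hence the entire theorem collapses to one assertion: for a fixed pair $(D_{\mu},N')$, any two $\mu$-generic matrices $N^*$ and $\widehat{N}^*$ in its orbit produce the same family $\{ k_{ij} \}$.

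Write $N^*=Q_1N'S_1^{-1}$ and $\widehat{N}^*=Q_2N'S_2^{-1}$, with $Q_1,Q_2$ $\mu$-admissible and carrying the $UL$ and $LU$ factorizations supplied by Lemma~\ref{row-ineq}, and $S_1,S_2$ supplied by Corollary~\ref{col-ineq}. Then $\widehat{N}^*=\widetilde{Q}\,N^*\,\widetilde{S}^{-1}$ with $\widetilde{Q}=Q_2Q_1^{-1}$ again $\mu$-admissible and $\widetilde{S}=S_2S_1^{-1}\in GL_r(R)$. If $\widetilde{S}$ were itself $\mu$-admissible, Proposition~\ref{big-prop} (whose hypotheses are met, both matrices being $\mu$-generic) would at once give $\|N^*_{IJ}\|=\|\widehat{N}^*_{IJ}\|$ for every pair of index sets $I,J$ of equal length; since each symbol $\| ( (p)^{\wedge},\ldots,(q)^{\wedge} ) \|$ occurring in Equation~\ref{kij-def} is exactly one such minor order of $N^*$, the two families $\{ k_{ij} \}$ would coincide --- and they are genuine elements of $L{\mathbb R}(\mu,\nu,\lambda)$ by Theorem~\ref{kij-lr}. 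So the left factor $\widetilde{Q}$ is handled wholesale by Proposition~\ref{big-prop}, whose proof rests on Inequalities~\ref{row} and~\ref{mu-gap}.

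The one genuine obstacle is the right factor $\widetilde{S}$, which a priori lies only in $GL_r(R)$ and need not be $\mu$-admissible. I would dispose of it using the right-hand mirror of the construction, encoded in Corollary~\ref{col-ineq}: in producing $N^*$ and $\widehat{N}^*$ one may, by part (b) of that corollary, take the right conjugators $S_1$ and $S_2$ to be $\widehat{\nu}$-admissible, where $\nu=inv(N')=inv(N)$ is itself an orbit invariant, so that $\widetilde{S}=S_2S_1^{-1}$ is $\widehat{\nu}$-admissible; with this choice $N^*$ and $\widehat{N}^*$ also satisfy Inequality~\ref{nu-gap}. Running the Cauchy--Binet and genericity argument behind Proposition~\ref{big-prop} in its mirror form --- with Inequalities~\ref{col} and~\ref{nu-gap} of Corollary~\ref{col-ineq} replacing Inequalities~\ref{row} and~\ref{mu-gap}, and right multiplication by $\widehat{\nu}$-admissible matrices replacing left multiplication by $\mu$-admissible ones --- then shows that right conjugation by a $\widehat{\nu}$-admissible matrix likewise leaves every minor order $\|N^*_{IJ}\|$ unchanged. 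Composing the two halves, through a single intermediate upper-triangular $\mu$-generic matrix (which is exactly where the $UL$/$LU$-decomposition clauses of Lemma~\ref{row-ineq} and Corollary~\ref{col-ineq} enter), yields $\|N^*_{IJ}\|=\|\widehat{N}^*_{IJ}\|$ for all $I,J$, hence the equality of the two fillings via Equation~\ref{kij-def}. The delicate bookkeeping, and the place where I expect the real work to lie, is keeping each intermediate matrix simultaneously upper-triangular and $\mu$-generic so that both invariance results apply in turn; the rest is formal.
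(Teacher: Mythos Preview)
Your overall strategy---reduce to comparing two $\mu$-generic matrices in the same $(D_{\mu},\cdot)$-orbit and invoke Proposition~\ref{big-prop} together with Equation~\ref{kij-def}---is exactly the route the paper intends; the paper gives no proof here and simply refers to \cite{lrcon}, so there is nothing finer to compare against. You also correctly isolate the real obstruction: Proposition~\ref{big-prop} as stated needs both $Q$ and $T$ to be $\mu$-admissible, while the right factor $\widetilde S$ connecting two $\mu$-generic representatives is only known to lie in $GL_r(R)$.

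Your $\widehat\nu$-admissibility detour, however, does not close the gap. Corollary~\ref{col-ineq}(b) lets you \emph{build} particular $\mu$-generic representatives whose right conjugator is $\widehat\nu$-admissible, but the Uniqueness Theorem must apply to \emph{every} $\mu$-generic matrix in the orbit, and the paper's definition of $\mu$-generic requires only Inequality~\ref{col} on the right, not Inequality~\ref{nu-gap}. So to compare an arbitrary $\mu$-generic $N^*$ to one of your special ones you again face a right factor that is neither $\mu$- nor $\widehat\nu$-admissible, and you are back where you started. The ``mirror Proposition~\ref{big-prop}'' you sketch therefore does not apply where you need it.

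There is a cleaner way to neutralize the right factor that avoids any admissibility hypothesis on $\widetilde S$. Equation~\ref{kij-def} uses only minors of the form $\|(N^*)_{I,\widehat J_k}\|$ (fixed rows $I$, rightmost $k$ columns). By Inequality~\ref{col}, which every $\mu$-generic matrix satisfies, $\|(N^*)_{I,\widehat J_k}\|=\min_{H}\|(N^*)_{IH}\|$ over \emph{all} column sets $H$ of size $k$; this minimum is the order of the gcd of the $k\times k$ minors of the $k\times r$ row submatrix $(N^*)_I$, i.e.\ the sum of its invariant factors. That quantity is unchanged by $N^*\mapsto N^*T^{-1}$ for any $T\in GL_r(R)$. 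So the right factor is harmless for the specific minors that matter, and the only genuine work is the left $\mu$-admissible factor, handled by the Cauchy--Binet argument behind Proposition~\ref{big-prop} via Inequalities~\ref{row} and~\ref{mu-gap}. This dissolves the bookkeeping you anticipate.
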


 Finally, given a L${\mathbb R}$-filling of $\lambda / \mu$ with content $\nu$,
 where $\mu, \nu$, and $\lambda$ are ${\mathbb R}$-partitions, the following result allows one to {\em construct} a
 matrix pair $(M,N)$ such that $inv(M) = \mu$, $inv(N) = \nu$, and
 $inv(MN) = \lambda$.  The proof of this, again in the context of
 discrete valuation rings and classical \LR-fillings, first
 appeared in~\cite{me}, but the proofs there made only formal use of
 the conditions L${\mathbb R}$3 and L${\mathbb R}$4, and they may be
 used in our case here without alteration.  A version of this result over discrete
valuation rings, though based on conjugate sequences and a very
different construction, had first been obtained in \cite{AS}.

\begin{thm}[\cite{me}]
Let $F=\{ k_{ij}: 1 \leq i \leq r, \, i \leq j \leq r \}$ be a
L${\mathbb R}$-filling of $\lambda / \mu$ with content $\nu$. Define
$r \times r$ matrices $M,N_{1},N_{2}, \ldots N_{r}$ over $R$ by
\label{my-thm}
\begin{enumerate}
\item $M = diag(t^{\mu_{1}},t^{\mu_{2}}, \ldots , t^{\mu_{r}})$.
\item Define the block matrix $N_{i}$ by
\[ N_{i} = \left[ \begin{array}{c|c}
1_{i-1} & 0 \\
\hline 0 & T_{i}
\end{array} \right] \]
where $T_{i}$ is the $(r-i+1) \times (r-i+1)$ matrix:
\[ T_{i} = \left[ \begin{array}{ccccc}
t^{k_{i,i}}    & 1    & 0    & \cdots    &       0  \\
0        & t^{k_{i,i+1}} & 1 &  \ddots   &       \vdots     \\
0 &      0& t^{k_{i,i+2}} & \ddots  &        0  \\
 \vdots & \vdots  & \ddots  &  \ddots    & 1 \\
0  &  0  & \cdots  &  0 &    t^{k_{i,r}} \\

\end{array} \right] \]
and $1_{i-1}$ is an $(i-1) \times (i-1)$ identity matrix.
\end{enumerate}
Set $N=N_1 N_2 \cdots N_r$.  Then $inv(M)=\mu$, $inv(N)=\nu$, and
$inv(MN) = \lambda$. \label{lr_to_mat}
\end{thm}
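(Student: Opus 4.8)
The plan is to verify directly that the explicitly constructed pair $(M,N)$ has the three claimed invariant partitions, using the determinantal characterizations already established in the excerpt (Theorem~\ref{inv-fac}, Theorem~\ref{interleaving}, and especially Corollary~\ref{row-inv} together with the formula of Theorem~\ref{kij-lr}). First I would observe that $M=D_\mu$ is already diagonal, so $inv(M)=\mu$ is immediate. The real content is in computing $inv(N)$ and $inv(MN)=inv(D_\mu N)$. Here the key structural fact is that each block matrix $N_i$ is upper-triangular and equals the identity in its first $i-1$ rows and columns, so the product $N=N_1N_2\cdots N_r$ is upper-triangular; moreover, the special bidiagonal form of each $T_i$ means that the product $N$ is precisely a matrix in ``$\mu$-generic'' form (or can be brought to one by an upper-triangular, hence $\mu$-admissible, change of basis that does not affect the relevant minor orders, by Proposition~\ref{big-prop}). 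I would make this identification precise: I claim $N$ itself is $\mu$-generic as a matrix associated to $N$, since the trivial factorization $N = 1\cdot N\cdot 1^{-1}$ with generic upper/lower-triangular factors absorbed appropriately satisfies the inequalities of Lemma~\ref{row-ineq} and Corollary~\ref{col-ineq} after a generic perturbation.

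Granting that $N$ (or a generic equivalent) is $\mu$-generic, the next step is to compute the orders of the minors $\|N_{I_sH_{(r-s)}}\|$ and $\|(D_\mu N)_{H_{(r-s)}H_{(r-s)}}\|$ appearing in Corollary~\ref{row-inv}, and check they equal $\nu_{r-s+1}+\cdots+\nu_r$ and $\lambda_{r-s+1}+\cdots+\lambda_r$ respectively. This is where the bidiagonal structure pays off: the minor of $N$ on rows $(1,\dots,s)$ and the rightmost $s$ columns picks out, from each factor $N_i$, a product of diagonal entries $t^{k_{i,\ast}}$ along a staircase path, and summing the exponents along that path reconstructs exactly the partial sums $\sum_{s=i}^r k_{is}$ prescribed by condition (L${\mathbb R}1$). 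I would organize this as an induction on $r$, peeling off the last factor $N_r$ (a single diagonal entry $t^{k_{rr}}$ padded by an identity), or alternatively compute the minors directly by Cauchy--Binet and use the word condition (L${\mathbb R}4$) — which the filling $F$ satisfies by hypothesis — to certify that the staircase path realizing the minimum order is the ``rightmost'' one, so that no catastrophic cancellation occurs and the order is genuinely the sum of exponents along that path rather than something smaller. The identities of Corollary~\ref{word-lem} are exactly the bookkeeping lemmas needed to match the resulting sums with $|\nu_I|$ and $|\lambda_J|$.

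Once the minor orders are shown to agree with the partial sums of $\nu$ and $\lambda$, I would invert the relationship: Corollary~\ref{row-inv} says that for a $\mu$-generic matrix these particular minor orders \emph{determine} $inv(N^*)$ and $inv(D_\mu N^*)$, so matching them forces $inv(N)=\nu$ and $inv(D_\mu N)=\lambda$. The compatibility condition $|\mu|+|\nu|=|\lambda|$, built into the definition of an L${\mathbb R}$-filling, is consistent with this (it is forced by taking $s=r$, i.e.\ the full determinant: $\|\det N\|=|\nu|$ and $\|\det D_\mu N\|=|\mu|+|\nu|=|\lambda|$). A point requiring care is that the invariant-factor theorem and Corollary~\ref{row-inv} are stated for $\mu$-generic matrices, and the matrix $N$ we wrote down, while upper-triangular and structurally adapted, might fail the strict genericity inequalities on \emph{some} minors; the clean fix is to conjugate $N$ by a generic upper-triangular $Q$ and $T$ (both $\mu$-admissible, preserving $inv(D_\mu \cdot)$ and, via Proposition~\ref{big-prop}, preserving all the minor orders we care about), thereby landing in a genuinely $\mu$-generic representative without changing any of the computed quantities.

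The main obstacle I anticipate is precisely the no-cancellation argument in the minor computations: establishing that the order of $\|N_{I_sH_{(r-s)}}\|$ equals the \emph{minimum} over staircase paths of the sum of the exponents $k_{i\ast}$ encountered — and that this minimum is attained at the rightmost admissible path — requires invoking the column-strictness (L${\mathbb R}3$) and word (L${\mathbb R}4$) conditions to rule out that two path-contributions of equal minimal order cancel. This is the step that ``makes only formal use of L${\mathbb R}3$ and L${\mathbb R}4$'' alluded to in the citation to~\cite{me}; I would reproduce that formal argument, emphasizing that nothing in it used integrality or non-negativity of the edge parts $k_{ii}$, so it transfers verbatim to the ${\mathbb R}$-valued setting. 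Everything else — the block structure of the $N_i$, the triangularity of $N$, and the final appeal to Corollary~\ref{row-inv} — is routine.
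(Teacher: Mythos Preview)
The paper does not actually supply its own proof of this theorem: it cites~\cite{me} and asserts that the argument there, which ``made only formal use of the conditions L${\mathbb R}$3 and L${\mathbb R}$4,'' transfers verbatim to the ${\mathbb R}$-valued setting. So the comparison is really between your approach and the one in~\cite{me}. Since~\cite{me} predates the $\mu$-generic formalism of~\cite{lrcon}, its proof cannot rely on Corollary~\ref{row-inv}, Proposition~\ref{big-prop}, or the determinantal formula of Theorem~\ref{kij-lr}. The argument there is almost certainly the direct one: show by induction on $i$ that $inv(D_\mu N_1\cdots N_i)=\lambda^{(i)}$, the $i$-th partition in the L${\mathbb R}$-sequence, by explicit row and column reduction of the bidiagonal factor $T_i$, with column-strictness (L${\mathbb R}$3) guaranteeing that the pivots fall in the right order; and a parallel reduction (using the word condition L${\mathbb R}$4) for $inv(N_1\cdots N_r)=\nu$. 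Your route through the $\mu$-generic machinery is genuinely different --- it trades elementary row operations for a structural appeal to Corollary~\ref{row-inv} --- and, if it works, it has the virtue of fitting the theorem into the framework the rest of the paper builds.

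That said, there is a soft spot you should shore up. Your plan to ``invert'' Corollary~\ref{row-inv} is sound \emph{provided} $N$ (or a conjugate) is $\mu$-generic, but your justification for this is circular: you invoke Proposition~\ref{big-prop} to say the minor orders are unchanged under the genericizing conjugation, yet Proposition~\ref{big-prop} is stated only for pairs of matrices that are \emph{both} already $\mu$-generic. To close the gap you must either (i) verify directly that the constructed $N$ satisfies Inequalities~\ref{row} and~\ref{mu-gap} of Lemma~\ref{row-ineq} and Inequality~\ref{col} of Corollary~\ref{col-ineq} on its own minors --- which amounts to proving, from L${\mathbb R}$3 and L${\mathbb R}$4, exactly the staircase-path minimality you describe --- or (ii) abandon Corollary~\ref{row-inv} and instead show directly that $\min_{|I|=|J|=s}\|N_{IJ}\|=\nu_{r-s+1}+\cdots+\nu_r$ and likewise for $D_\mu N$, which is the standard determinantal characterization of invariant partitions and needs no genericity hypothesis. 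Either fix brings you essentially back to the elementary computation that~\cite{me} presumably carries out; the $\mu$-generic language is then packaging rather than leverage.
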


(Note that in \cite{me}, Theorem~\ref{my-thm} was written so that
invariants were calculated in {\em increasing} order, and so the
matrices used in the factorization have a slightly different form.)

\medskip

\section{Left-Right Bijections} We now give an alternate method for
computing a L${\mathbb R}$ filling from a matrix pair $(M,N) \in
M_{r}({\cal F})$ which is conceptually simpler, and is more
```symmetric" in that {\em one} special matrix form can be used to
obtain {\em both} the ``right filling" of $\lambda / \mu$ with
content $\nu$, and the ``left filling" of $\lambda / \nu$ with
content $\mu$.  This method will also exhibit properties that
replicate all the fundamental features of the combinatorial
bijection $L{\mathbb R}(\mu, \nu, \lambda) \leftrightarrow L{\mathbb
R}(\nu, \mu, \lambda)$ in the context of {\em integer-valued}
fillings (see \cite{kerber}). These features will allow us to prove
that every left filling of a matrix pair determines a unique right
filling that depends only on the filling, and not on the particular
matrix realization of it.

Given an ${\mathbb R}$-partition $\nu= (\nu_1, \dots , \nu_r)$,
recall the notation
\[ \widehat{\nu} = (\widehat{\nu_1},\widehat{\nu_2},\dots, \widehat{\nu_r}) =(\nu_{r}, \nu_{r-1}, \dots, \nu_2, \nu_1), \]
and
\[ D_{\widehat{\nu}} = diag(t^{\nu_{r}}, t^{\nu_{r-1}}, \dots ,
t^{\nu_{1}}). \] In particular, if $J=(j_1, j_2, \dots , j_k)$ is an
index set of length $k \leq r$, then we will adopt the notation \[
\widehat{\nu}_{J} = (\widehat{\nu}_{j_{1}}, \widehat{\nu}_{j_{2}},
\dots, \widehat{\nu}_{j_{k}} ) = ( \nu_{r-j_{1}+1}, \nu_{r-j_{2}+1},
\dots , \nu_{r-j_{k}+1}). \]

Recall also that, given an ${\mathbb R}$-partition $\nu$, we will
say a matrix $T \in GL_{r}(R)$ is $\widehat{\nu}$-admissible
whenever
\[D_{\widehat{\nu}}^{-1} T D_{\widehat{\nu}} \in GL_{r}(R). \]
Note that if $T$ is $\widehat{\nu}$-admissible, and $\tau_{ij}$ is
the $(i,j)$-entry of $T$, then whenever $i \geq j$,
\[ \| \tau_{ij} \| \geq \widehat{\nu}_{i} - \widehat{\nu}_{j} =
\nu_{r-i+1} - \nu_{r-j+1}. \]

We will describe an alternative method to calculate L${\mathbb
R}$-fillings from matrix pairs.  Let us assume we have a pair
$(D_{\mu},N)$, where $D_{\mu} = diag(t^{\mu_1}, \dots, t^{\mu_r})$.
Let us factor $N$ according to its invariant factors as:
\[ N = S D_{\widehat{\nu}} V^{-1}, \]
where $S$ and $V$ are invertible over $R$.  By the definition of
paired-equivalence given above we see $(D_{\mu},SD_{\widehat{\nu}})$
is in the orbit of $(D_{\mu},N)$, so we will work with this simpler
matrix.  (Note that also $(D_{\mu}S, D_{\widehat{\nu}})$ is in the
orbit of $(D_{\mu},SD_{\widehat{\nu}})$ as well).  We will act on
$SD_{\widehat{\nu}}$ on the left using $\mu$-admissible matrices,
generic with respect to $S$, as in the previous section, but we will
also be using generic $\widehat{\nu}$-admissible matrices on the
right.  Suppose $Q$ is a given $\mu$-admissible matrix, and $T$ is
$\widehat{\nu}$-admissible.  Then, by definition,
\[ D_{\mu}QD_{\mu}^{-1}, D_{\widehat{\nu}}^{-1}TD_{\widehat{\nu}}
\in GL_{r}(R). \] Hence, we may act on the pair $(D_{\mu},
SD_{\widehat{\nu}})$ via \begin{align*} \big( D_{\mu}QD_{\mu}^{-1},
Q,D_{\widehat{\nu}}^{-1}TD_{\widehat{\nu}} \big) \cdot (D_{\mu},
SD_{\widehat{\nu}}) &= \Big( \big(D_{\mu}QD_{\mu}^{-1} \cdot D_{\mu}
\cdot Q^{-1}\big), \big( Q \cdot S D_{\widehat{\nu}} \cdot
D_{\widehat{\nu}}^{-1}TD_{\widehat{\nu}} \big) \Big) \\
& =\Big(  D_{\mu} ,  (Q \cdot S \cdot T) \cdot D_{\widehat{\nu}}
\Big).
\end{align*}

Thus, finding invariants for the orbit $(D_{\mu},
SD_{\widehat{\nu}})$ is equivalent to considering invariants of the
orbit $S \mapsto QST$ of the invertible matrix $S$ under the action
of $\mu$-admissible matrices $Q$ on the left, and
$\widehat{\nu}$-admissible matrices $T$ on the right.

\begin{df}
A lower-triangular matrix $L \in GL_{r}(R)$ which may be factored
\[ L = Q_{L}Q_{U}S T_{U}T_{L} \]
so that, for some $S \in GL_{r}(R)$ the matrices $Q_{L},Q_{U},T_{U}$
and $T_{L}$ satisfy Inequalities~\ref{row} and~\ref{mu-gap} of
Lemma~\ref{row-ineq}, and Inequalities~\ref{col} and~\ref{nu-gap} of
Corollary~\ref{col-ineq} will be called a {\em
$\mu$-$\widehat{\nu}$-generic matrix.}
\end{df}

 \begin{cor} Given any matrix pair $(M,N)$ such that $\mu = inv(M)$
 and $\nu = inv(N)$, there is a $\mu$-$\widehat{\nu}$-generic matrix
 $L$ such that $(D_{\mu}, L D_{\widehat{\nu}})$ is in the orbit of
 $(M,N)$.
 \end{cor}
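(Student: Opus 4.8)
The plan is to reduce, by two applications of Theorem~\ref{inv-fac}, to a pair of the form $(D_{\mu}, S_0 D_{\widehat{\nu}})$, and then to build $L$ out of $S_0$ by a generic two-sided row/column reduction that triangularizes the product, imitating the construction of $\mu$-generic matrices in the previous section.

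First I would use Theorem~\ref{inv-fac} to replace $(M,N)$ by a pair-equivalent $(D_{\mu}, N')$ with $\mu = inv(M)$; since pair equivalence preserves the invariant partition of the second matrix, $inv(N') = \nu$, so $N' = A D_{\nu} B^{-1}$ for some $A, B \in GL_{r}(R)$. Conjugating by the order-reversing permutation matrix $w_{0} \in GL_{r}(R)$ gives $D_{\nu} = w_{0} D_{\widehat{\nu}} w_{0}$, hence $N' = S_0 D_{\widehat{\nu}} V^{-1}$ with $S_0 = A w_{0}$ and $V = B w_{0}$ in $GL_{r}(R)$; acting by the triple $(I, I, V^{-1})$ then shows $(D_{\mu}, S_0 D_{\widehat{\nu}})$ lies in the orbit of $(M,N)$. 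By the computation displayed just before the definition of a $\mu$-$\widehat{\nu}$-generic matrix, for any $\mu$-admissible $Q$ and $\widehat{\nu}$-admissible $T$ the pair $(D_{\mu}, (QS_0T) D_{\widehat{\nu}})$ again lies in this orbit; so it suffices to produce such $Q$ and $T$ with $L := QS_0T$ lower-triangular and of the form required by the definition.

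For the construction I would apply Lemma~\ref{row-ineq} to obtain a $\mu$-admissible $Q$ possessing both $UL$ and $LU$ decompositions over $R$ and generic enough with respect to $S_0$ that Inequalities~\ref{row} and~\ref{mu-gap} hold, and then apply Corollary~\ref{col-ineq} to the matrix $QS_0$ to obtain a $\widehat{\nu}$-admissible $T$, again with both decompositions, generic enough that Inequalities~\ref{col} and~\ref{nu-gap} hold for $(QS_0)T$. It then remains to arrange that $QS_0T$ is lower-triangular: using the free upper-triangular factors in these decompositions together with additional lower-triangular $\mu$- (resp. $\widehat{\nu}$-) admissible factors absorbed into the outer positions, one column- and row-reduces $QS_0T$ to lower-triangular form, so that the resulting $L$ has exactly the shape $Q_L Q_U \, S_0 \, T_U T_L$ demanded by the definition, with $S_0$ playing the role of "$S$".

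The main obstacle is this last point --- verifying that the reduction producing a lower-triangular product can be carried out \emph{within} the families of admissible matrices furnished by Lemma~\ref{row-ineq} and Corollary~\ref{col-ineq} without destroying the genericity Inequalities~\ref{row}--\ref{nu-gap}. As in the proof of Lemma~\ref{row-ineq}, I expect this to reduce to the observation that the triangularization requires only that certain trailing minors of the intermediate products be units in $R$, a condition carving out a nonempty set of residue-field choices that meets the similarly nonempty sets imposed by the genericity conditions, so a simultaneously valid choice of $Q$ and $T$ exists. With that compatibility established, $L$ is a $\mu$-$\widehat{\nu}$-generic matrix with $(D_{\mu}, L D_{\widehat{\nu}})$ in the orbit of $(M,N)$, as required.
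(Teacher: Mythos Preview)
Your reduction to a pair $(D_{\mu}, S_0 D_{\widehat{\nu}})$ and the recognition that one then needs a two-sided generic admissible action are exactly right, and match the paper. Where you diverge is in the \emph{order} of the two remaining steps, and this is what produces the ``main obstacle'' you identify.

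You first choose a generic $\mu$-admissible $Q$ and $\widehat{\nu}$-admissible $T$ satisfying Inequalities~\ref{row}--\ref{nu-gap}, and \emph{then} try to force $QS_0T$ to be lower-triangular while preserving those genericity inequalities. The paper reverses this: it first chooses purely \emph{upper}-triangular matrices $\widehat{Q_U}$ and $\widehat{T_U}$ (automatically $\mu$- and $\widehat{\nu}$-admissible) so that $\widehat{Q_U}\,S\,\widehat{T_U}$ is already lower-triangular --- this is just Gaussian elimination and imposes no genericity constraints at all. \emph{Then} it invokes Lemma~\ref{row-ineq}(b) and Corollary~\ref{col-ineq}(b), which take a given upper-triangular factor and supply a generic, admissible, \emph{lower}-triangular factor on the outside so that the product $Q = \widehat{Q_L}\widehat{Q_U}$ (respectively $T = \widehat{T_U}\widehat{T_L}$) has both decompositions and satisfies the required inequalities. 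Since the new outer factors $\widehat{Q_L}$ and $\widehat{T_L}$ are lower-triangular and they are multiplied against the already lower-triangular $\widehat{Q_U}\,S\,\widehat{T_U}$, the full product $L$ remains lower-triangular for free.

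So your obstacle --- checking that triangularization can be achieved inside the generic families --- simply does not arise in the paper's ordering: triangularize first with upper-triangular factors, then attach generic lower-triangular factors on the outside. Your sketch is not wrong, and your heuristic that the various nonempty Zariski-open conditions should intersect is plausible, but it is unnecessary work; swapping the two steps dissolves the difficulty entirely.
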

 \begin{proof}
There is a matrix $N'$ such that $(D_{\mu}, N')$ is in the orbit of
$(M,N)$.  Let us write
\[ N' = SD_{\widehat{\nu}}W \]
for invertible $S,W \in GL_{r}(R)$.  Then we also have $(D_{\mu},
SD_{\widehat{\nu}})$ in the same orbit.  With appropriate
upper-triangular matrices $\widehat{Q_{U}}$ and $\widehat{T_{U}}$
(which are necessarily $\mu$-admissible and
$\widehat{\nu}$-admissible, respectively), we may ensure that the
product $\widehat{Q_{U}}S \widehat{T_{U}}$ is lower-triangular.  The
result now follows from Lemma~\ref{row-ineq} and
Corollary~\ref{col-ineq}.
\end{proof}

\begin{df}
Let us establish the following notation:
\[ J_{k} = (1,2,\dots, k), \quad \widehat{J}_{k} = (r-k+1, r-k+2,
\dots, r), \] and, more generally, if $U=(u_1, u_2, \dots, u_k)$ is
any index set (so $u_1 < u_2 < \dots < u_k$), then set
\[ \widehat{U}=(\widehat{u}_1, \dots, \widehat{u}_k) = \big( (r-u_k
+1), (r-u_{k-1}+1), \dots, (r-u_2 + 1), (r-u_1 + 1) \big). \]
\end{df}
\begin{prop}
Suppose we are given ${\mathbb R}$-partitions $\mu$ and $\nu$ of
length $r$.  Let $N$ be a $\mu$-generic, and $L$ be a
$\mu$-$\widehat{\nu}$-generic matrix such that $(D_{\mu}, N)$ and
$(D_{\mu}, LD_{\widehat{\nu}})$ are pair-equivalent. Then if $I$ is
any index set of length $k$, $1 \leq k \leq r$, we have
\label{mu=mu-nu}
\[  \left\|
\big(N \big)_{I\widehat{J}_{k}} \right\| = \left\|
\big(LD_{\widehat{\nu}}\big)_{IJ_{k}} \right\|. \]
\end{prop}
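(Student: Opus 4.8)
The plan is to compute both sides of the claimed identity in terms of invariant factors of the pair $(D_\mu, N)$ (equivalently $(D_\mu, LD_{\widehat\nu})$), showing each equals the same quantity, namely a partial sum $\nu_{r-k+1} + \dots + \nu_r = |\widehat\nu_{J_k}|$ shifted by data coming from $I$. For the left-hand side, I would invoke Corollary~\ref{row-inv} and Proposition~\ref{big-prop}: since $N$ is $\mu$-generic with $inv(N) = \nu$, the order $\|N_{I\widehat J_k}\|$ is, by genericity, the minimal possible order among $k\times k$ minors using rows $I$, and by the ``rightmost columns'' argument of Lemma~\ref{row-ineq} (Inequality~\ref{row} applied with $J = \widehat J_k$) this is achieved precisely at columns $\widehat J_k$. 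The key point is that this minor's order is an invariant of the $\mu$-generic orbit (Proposition~\ref{big-prop}), so it can be read off from any convenient representative.

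For the right-hand side, I would exploit the factorization defining $\mu$-$\widehat\nu$-genericity: $L = Q_L Q_U S T_U T_L$ with $S \in GL_r(R)$ and the matrices satisfying Inequalities~\ref{row}, \ref{mu-gap}, \ref{col}, \ref{nu-gap}. Write $LD_{\widehat\nu} = Q_L Q_U \cdot S \cdot (T_U T_L D_{\widehat\nu})$. Applying Cauchy–Binet to $(LD_{\widehat\nu})_{IJ_k}$ and using the genericity of $Q_L Q_U$ (no catastrophic cancellation, rightmost columns dominate — exactly the mechanism in the proof of Lemma~\ref{row-ineq}), the order collapses to the order of a minor of $S(T_U T_L D_{\widehat\nu})$ over the rows forced by $I$; then, pulling $D_{\widehat\nu}$ through $T_U T_L$ and using that $T_U T_L$ is $\widehat\nu$-admissible and generic with respect to $S$ (Inequalities~\ref{col}, \ref{nu-gap}), the columns $J_k$ of $LD_{\widehat\nu}$ correspond, under the ``hat'' reversal, to the columns $\widehat J_k$ relevant on the other side. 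Tracking the $D_{\widehat\nu}$-twist contributes exactly $|\widehat\nu_{J_k}|$, matching what Corollary~\ref{row-inv} gives on the left.

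The cleanest route, which I would actually write up, is to reduce both sides to the \emph{same} matrix: since $(D_\mu, N)$ and $(D_\mu, LD_{\widehat\nu})$ are pair-equivalent with both $N$ and $LD_{\widehat\nu}$ fixing the first coordinate $D_\mu$, there is a $\mu$-admissible $Q$ and a $\widehat\nu$-admissible $T$ with $Q N T^{-1} = LD_{\widehat\nu}$. I would then show (a) $\|N_{I\widehat J_k}\| = \|(QN T^{-1})_{I J_k}\|$ is forced purely by the admissibility/genericity inequalities — the left action by $Q$ preserves orders of rightmost-column minors on rows $I$ by Proposition~\ref{big-prop}, and the right multiplication by $T^{-1}$ together with the $D_{\widehat\nu}$ on the right converts ``rightmost columns forming a square minor'' on the $N$ side into the literal columns $J_k$ on the $LD_{\widehat\nu}$ side, because $T$ is lower-triangular-dominated in the $\widehat\nu$-admissible sense and the reversal $\widehat J_k$ of $J_k$ is exactly the index relabeling induced by conjugation-like twisting by $D_{\widehat\nu}$. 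The main obstacle — and the step I expect to consume the real work — is (b): carefully justifying that the ``rightmost columns'' selected on the $N$-side ($\widehat J_k$) are precisely the columns $J_k$ needed on the $LD_{\widehat\nu}$-side, since $D_{\widehat\nu}$ reverses the diagonal, so that the minor that is ``rightmost and square'' for $N_{I\,\bullet}$ gets sent to the minor that is ``leftmost'' ($J_k$) for $(LD_{\widehat\nu})_{I\,\bullet}$; this requires a Cauchy–Binet bookkeeping argument over the index sets $W$ with $I \subseteq W$, combined with Inequalities~\ref{row} and~\ref{col} to pin down where the minimum order is attained, and the $\widehat\nu$-gap bound (Inequality~\ref{nu-gap}) to control the contribution of the $D_{\widehat\nu}$-twist exactly. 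Everything else is a formal manipulation of orders using $\|AB\| = \|A\| + \|B\|$ for the relevant triangular pieces.
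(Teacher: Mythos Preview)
Your overall intuition is right --- the column reversal induced by $D_{\widehat\nu}$ is precisely what converts ``rightmost'' minors on the $N$-side to ``leftmost'' minors on the $LD_{\widehat\nu}$-side, and Cauchy--Binet with genericity is the tool to make this precise. But your ``cleanest route'' contains a real error that would block the argument.

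You assert that pair-equivalence of $(D_\mu,N)$ and $(D_\mu,LD_{\widehat\nu})$ gives $QNT^{-1}=LD_{\widehat\nu}$ with $Q$ $\mu$-admissible \emph{and} $T$ $\widehat\nu$-admissible. Pair-equivalence only guarantees $Q$ is $\mu$-admissible (from $PD_\mu Q^{-1}=D_\mu$); the matrix $T$ is an arbitrary element of $GL_r(R)$, and nothing forces $\widehat\nu$-admissibility on it. Without that, you cannot invoke Inequality~\ref{nu-gap} to control how the column operation moves orders, and your step (b) --- the bookkeeping that pins the minimum at $J_k$ --- has no traction. Relatedly, you cannot apply Proposition~\ref{big-prop} directly to compare $N$ with $LD_{\widehat\nu}$: that proposition compares two \emph{$\mu$-generic} matrices, and $LD_{\widehat\nu}$ is not $\mu$-generic (it is not even upper triangular).

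The paper sidesteps this by not trying to relate $N$ to $LD_{\widehat\nu}$ directly. Instead it \emph{manufactures} a $\mu$-generic matrix out of $LD_{\widehat\nu}$: multiply on the right by the reversal permutation $\Pi_r$ (so that $(W\Pi_r)_{IJ_k}=W_{I\widehat J_k}$), then by a freshly chosen generic $T=T_LT_U$ satisfying Corollary~\ref{col-ineq}, obtaining $LD_{\widehat\nu}\Pi_rT$ which \emph{is} $\mu$-generic and in the same orbit. Now Proposition~\ref{big-prop} legitimately gives $\|N_{I\widehat J_k}\|=\|(LD_{\widehat\nu}\Pi_rT)_{I\widehat J_k}\|$, and the remaining identity $\|(LD_{\widehat\nu}\Pi_rT)_{I\widehat J_k}\|=\|(LD_{\widehat\nu})_{IJ_k}\|$ is a single Cauchy--Binet expansion in which the generic choice of $T$ (not some inherited $T$ from pair-equivalence) guarantees no cancellation and the minimum lands at $U=J_k$. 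The permutation $\Pi_r$ is exactly the concrete device you were reaching for in your step (b); introducing it explicitly and building the auxiliary $\mu$-generic matrix is what makes the argument go through.
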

\begin{proof}  Since $L$ is $\mu$-$\widehat{\nu}$-generic, we may
write
\[ LD_{\widehat{\nu}} =Q_{L}Q_{U}\big(S T_{U}T_{L}D_{\widehat{\nu}}\big)=
\widehat{Q_{U}}\widehat{Q_{L}}\big(S
T_{U}T_{L}D_{\widehat{\nu}}\big). \] The left factors
$\widehat{Q_{U}}\widehat{Q_{L}}$ satisfy the conditions of
Lemma~\ref{row-ineq}, and so may serve as factors on the left to put
$S T_{U}T_{L}D_{\widehat{\nu}}$ into $\mu$-generic form. Note that,
since $L$ is $\mu$-$\widehat{\nu}$-generic, by
Corollary~\ref{col-ineq} the orders of entries in $L$ increase from
right to left in any given row, but upon multiplying $L$ by
$D_{\widehat{\nu}}$, the orders now increase from left to right.
Therefore, all that remains is to put $LD_{\widehat{\nu}}$ into
upper triangular form, and then multiply by a generic upper
triangular matrix.

Let $\Pi_{r}$ denote the $r \times r$ permutation matrix that takes
$\vec{e}_{i}$,  the $i$-th standard basis vector, to
$\vec{e}_{r-i+1}$.  That is, a matrix of zeros except for $1$'s on
the off-diagonal, running from the lower left corner to the upper
right.  Note that, given the notation above and any matrix $W \in
M_{r}({\cal F})$, that if $I$ is any index set of length $k$, $1
\leq k \leq r$, then
\[ \big( W \Pi_{r}\big)_{IJ_{k}} = \big(W\big)_{I\widehat{J}_{k}}. \]

 Let us first right multiply
$LD_{\widehat{\nu}}$ by $\Pi_{r}$, so that orders of entries in the
product $LD_{\widehat{\nu}}\Pi_{r}$ now increase from right to left.
Thus, we may put this product into upper triangular form by a lower
triangular matrix $T_{L}$, whereupon we will multiply
$LD_{\widehat{\nu}}\Pi_{r}T_{L}$ by an upper triangular matrix
$T_{U}$ satisfying the conditions of Corollary~\ref{col-ineq}.  Let
us set $T = T_{L}T_{U}$. The upshot of the above is that the product
\[ LD_{\widehat{\nu}}\Pi_{r}T \]
is a $\mu$-generic matrix such that $(D_{\mu}, LD_{\widehat{\nu}})$
is in the orbit of $(D_{\mu}, LD_{\widehat{\nu}}\Pi_{r}T)$.

First, note that by Proposition~\ref{big-prop}, that if $N^*$ is any
other $\mu$-generic matrix such that $(D_{\mu},N^*)$ is in the orbit
of $(D_{\mu}, LD_{\widehat{\nu}}\Pi_{r}T)$, then for any index sets
$I$ and $J$ of length $k$,
\[ \left\| \big( N^* \big)_{IJ} \right\| = \left\| \big(
LD_{\widehat{\nu}}\Pi_{r}T \big)_{IJ} \right\|, \] so, it is
sufficient to prove \begin{equation*} \left\|
\big(LD_{\widehat{\nu}}\big)_{IJ_{k}} \right\| = \left\| \big(
LD_{\widehat{\nu}}\Pi_{r}T \big)_{I\widehat{J}_{k}} \right\| =
\left\| \big( LD_{\widehat{\nu}}\Pi_{r}T_{L}T_{U}
\big)_{I\widehat{J}_{k}} \right\| \label{left-right}
\end{equation*}
for any index set $I$ of length $k$.  For this, we will expand
$LD_{\widehat{\nu}}\Pi_{r}T_{L}T_{U}$ according to the Cauchy-Binet
formula:
\[\big(LD_{\widehat{\nu}}\Pi_{r}T_{L}T_{U}\big)_{I\widehat{J}_{k}} =
\sum_{U,V,W}\big(LD_{\widehat{\nu}}\big)_{IU}\big(\Pi_{r}\big)_{UV}\big(T_{L}\big)_{VW}\big(T_{U}\big)_{W\widehat{J}_{k}}
\]

Note that $(\Pi_{R})_{UV} = 0$ unless $V = \widehat{U}$.  Further,
by the generic nature of the minors $(T_{L})_{W\widehat{J}_{k}}$, we
have
\[ \left\| \big(LD_{\widehat{\nu}}\Pi_{r}T_{L}T_{U}\big)_{I\widehat{J}_{k}} \right\| =
\min_{U,V,W} \left\|
\big(LD_{\widehat{\nu}}\big)_{IU}\big(\Pi_{r}\big)_{UV}\big(T_{L}\big)_{VW}\big(T_{U}\big)_{W\widehat{J}_{k}}
\right\|.
\]

Clearly this minimum is obtained when $\left\|
\big(LD_{\widehat{\nu}}\big)_{IU} \right\|$ is at a minimum, and the
other factors have order zero.  This can be accomplished, using
Corollary~\ref{col-ineq}, with the term
\[ \left\| \big(LD_{\widehat{\nu}}\big)_{IJ_k}\big(\Pi_{r}\big)_{J_{k}\widehat{J}_{k}}\big(T_{L}\big)_{\widehat{J}_{k}\widehat{J}_{k}}
\big(T_{U}\big)_{\widehat{J}_{k}\widehat{J}_{k}} \right\| = \left\|
\big(LD_{\widehat{\nu}}\big)_{IJ_k} \right\|, \] from which the
proposition is proved.
\end{proof}

Thus, we now have an new way to calculate the right L${\mathbb
R}$-filling of a pair $(M,N)$.  Our first method was to find a
$\mu$-generic $N^*$ such that $(D_{\mu}, N^*)$ was in the orbit of
$(M,N)$, and use the determinantal formulas to calculate the
filling.  We are now also able to find a
$\mu$-$\widehat{\nu}$-generic matrix $L$ such that $(D_{\mu},
LD_{\widehat{\nu}})$ is in the orbit as well, and use (almost) the
same determinantal formulas as in the $\mu$-generic case, except we
will use the leftmost columns instead of the rightmost.

Further, since $(D_{\mu}L, D_{\widehat{\nu}})$ is in the orbit of
$(D_{\mu}, LD_{\widehat{\nu}})$, we have an obvious way (switching
the roles of columns and rows) to generate determinantal formulas to
{\em also} compute the left filling of the pair, from the matrix
$D_{\mu}L$.

It might be argued, on the basis of the above, that we could
dispense with the $\mu$-generic form altogether.  However, though
the orders of appropriate minors of $LD_{\widehat{\nu}}$ coming from
the $\mu$-$\widehat{\nu}$-generic matrix $L$, equal the orders of
determinants of corresponding minors of the $\mu$-generic matrix
$N^*$, there does not seem to be a clear way to {\em prove} that the
filling determined by $L$ is necessarily a L${\mathbb R}$-filling.
So, our method is to use the $\mu$-generic matrix $N^*$ to define
the filling, and also to prove that it is a L${\mathbb R}$-filling,
and then pass to the $\mu$-$\widehat{\nu}$-generic form using
Proposition~\ref{mu=mu-nu} above.

More importantly, we will use the $\mu$-$\widehat{\nu}$-generic form
to dispense with determinantal formulas entirely, and use this form
to calculate L${\mathbb R}$-fillings directly from invariant factors
of matrices related to this form in a simple manner.

\begin{df}
Let us use the notation
\[ (\mu_{i}, \dots, \mu_1, H , \nu_1, \dots , \nu_j) \]
to denote the product of matrices
\[ diag(t^{\mu_1}, \dots t^{\mu_{i}}, t^0 \dots , t^0) \cdot H \cdot diag(t^0 \dots ,
t^0, t^{\nu_{j}}, \dots , t^{\nu_{1}}). \]

We will also let
\[( \mu H \nu_1, \dots , \nu_{j}) = (\mu_{r}, \dots,
\mu_1, H , \nu_1, \dots , \nu_j), \] and  \[(  H \nu_1, \dots ,
\nu_{j}) =   H \cdot diag(t^0 \dots , t^0, t^{\nu_{j}}, \dots ,
t^{\nu_{1}})   \] and similarly for $(\mu_{i} \dots , \mu_1 H \nu)$
and $(\mu_{i} \dots , \mu_1 H )$.
\end{df}

\begin{thm}
Let $(M,N) \in M_{r}({\cal F})^2$ be a matrix pair over ${\cal F}$,
with both matrices of full rank. Let $\{ k_{ij} \}$ be the right
filling associated to $(M,N)$ in $L{\mathbb R}(\mu, \nu, \lambda)$
and let $\{ m_{ij} \}$ be the left filling in $L{\mathbb R}(\nu,
\mu, \lambda)$. Let $(D_{\mu}, LD_{\widehat{\nu}})$ be a matrix pair
in the orbit of a pair $(M,N)$ where $L \in GL_{r}(R)$ is a lower
triangular $\mu$-$\widehat{\nu}$-generic matrix. Then the sequence
of invariants
\[ inv(\mu L),
inv(\mu L\nu_1),inv(\mu L \nu_1\nu_2),\dots inv(\mu L \nu_1, \dots
,\nu_r)
\] forms a L${\mathbb R}$ sequence for the right filling $\{ k_{ij}
\}$, and the sequence
\[ inv(L \nu), inv(\mu_1 L\nu),inv(\mu_2 \mu_1 L\nu),
\dots , inv(\mu_r, \dots, \mu_1 L\nu), \] is a L${\mathbb R}$
sequence associated to the left filling $\{ m_{ij}
\}$.\label{symm-fill}
\end{thm}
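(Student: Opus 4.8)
The plan is to reduce the statement about the two sequences of invariant factors to the determinantal formulas already established for the $\mu$-generic case, via Proposition~\ref{mu=mu-nu}. First I would focus on the right filling. By Proposition~\ref{mu=mu-nu}, a $\mu$-$\widehat{\nu}$-generic matrix $L$ with $(D_\mu, LD_{\widehat{\nu}})$ pair-equivalent to $(D_\mu, N^*)$ for a $\mu$-generic $N^*$ satisfies $\|(N^*)_{I\widehat{J}_k}\| = \|(LD_{\widehat{\nu}})_{IJ_k}\|$ for every index set $I$ of length $k$; and by Corollary~\ref{row-inv} and Corollary~\ref{word-lem}, the partial sums $k_{1\beta}+\cdots+k_{i\beta}$ are recovered from orders of minors of $N^*$ using rows $1,\dots,r$ with a block of rows omitted (equivalently, using rows $I$ with the right-most columns). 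The key computation is then to identify $\|\mathrm{inv}(\mu L \nu_1,\dots,\nu_p)_j\|$-type quantities: the matrix $(\mu L \nu_1,\dots,\nu_p)$ is $D_\mu \cdot L \cdot \mathrm{diag}(1,\dots,1,t^{\nu_p},\dots,t^{\nu_1})$, and I would compute the order of its top-justified / left-justified minors by Cauchy-Binet, exactly as in the proof of Proposition~\ref{mu=mu-nu}, so that the $j$-th invariant factor exponent of $(\mu L\nu_1,\dots,\nu_p)$ equals the corresponding sum $\lambda^{(p)}_j$ appearing in the L${\mathbb R}$ sequence $\lambda^{(0)}\subseteq\lambda^{(1)}\subseteq\cdots$ built from $\{k_{ij}\}$. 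Since the $p$-strip of the right filling is $\lambda^{(p)}/\lambda^{(p-1)}$ by definition of an L${\mathbb R}$ sequence, this establishes the first claim once the minor-order bookkeeping is carried out.

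Next I would treat the left filling symmetrically. Here the relevant matrix is $(\mu_q,\dots,\mu_1 L \nu) = \mathrm{diag}(t^{\mu_1},\dots,t^{\mu_q},1,\dots,1)\cdot L\cdot D_{\widehat{\nu}}$, and the point is that $(D_\mu L, D_{\widehat{\nu}})$ lies in the orbit of $(D_\mu, LD_{\widehat{\nu}})$, so by switching the roles of rows and columns — i.e. transposing, or equivalently using the column analogues $\mathrm{col}$ and $\mathrm{nu-gap}$ of Corollary~\ref{col-ineq} in place of $\mathrm{row}$ and $\mathrm{mu-gap}$ of Lemma~\ref{row-ineq} — the same $\mu$-$\widehat{\nu}$-generic matrix $L$ simultaneously computes the left filling of $\lambda/\nu$ with content $\mu$. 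I would invoke the already-proven determinantal formula (Theorem~\ref{kij-lr}, applied to the transposed pair, or to the pair $(D_{\widehat{\nu}}^{\,T}, \cdot)$) to identify $\{m_{ij}\}$ with the appropriate minor orders of $D_\mu L$, and then show that prepending successive factors $t^{\mu_1}, t^{\mu_1}t^{\mu_2},\dots$ on the left of $L D_{\widehat{\nu}}$ produces precisely the sequence of invariant partitions interpolating between $\mathrm{inv}(LD_{\widehat{\nu}}) = \nu$ (up to reindexing by $\widehat{\nu}$) and $\mathrm{inv}(D_\mu L D_{\widehat{\nu}}) = \lambda$.

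The main obstacle I expect is the bookkeeping in the Cauchy-Binet step: controlling which index sets $U$, $V$, $W$ realize the minimum order in the expansion of a minor of $D_\mu\cdot L\cdot(\text{partial }D_{\widehat{\nu}})$, and verifying that the ``no catastrophic cancelation'' genericity hypotheses on $L$ (built into the definition of $\mu$-$\widehat{\nu}$-generic) really do apply here even though we have inserted only a \emph{partial} diagonal matrix $\mathrm{diag}(1,\dots,1,t^{\nu_p},\dots,t^{\nu_1})$ rather than the full $D_{\widehat{\nu}}$. I would handle this by noting that a partial diagonal insertion only \emph{coarsens} the order estimates (it raises orders of some entries without introducing new vanishing), so the minimizing index sets are still the top-/left-justified ones, exactly as in the proof of Proposition~\ref{mu=mu-nu}; once that is in hand, the identification of invariant-factor exponents with the partial sums $\sum k_{ij}$ (resp.\ $\sum m_{ij}$) defining the L${\mathbb R}$ sequence is a direct application of Corollary~\ref{word-lem} and the interlacing of Theorem~\ref{interleaving}, which guarantees the computed exponents are genuinely the invariant factors and are weakly decreasing.
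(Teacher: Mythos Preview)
Your overall strategy matches the paper's: reduce to minors of $LD_{\widehat{\nu}}$ via Proposition~\ref{mu=mu-nu}, use Cauchy--Binet, and treat the left filling by the row/column symmetry coming from $(D_\mu L, D_{\widehat{\nu}})$ lying in the same orbit. Where you diverge is in the tactic for handling the partial diagonal $\nu^* = (\nu_1,\dots,\nu_\sigma,0,\dots,0)$. You propose to compute $inv(\mu L \nu_1,\dots,\nu_\sigma)$ \emph{directly} by locating the minimizing minors of $D_\mu L D_{\widehat{\nu^*}}$ and arguing that genericity of $L$ survives the partial insertion because it ``only coarsens order estimates.'' That last step is where the argument is thin: $L$ was constructed to be $\mu$-$\widehat{\nu}$-generic, not $\mu$-$\widehat{\nu^*}$-generic, and Corollary~\ref{row-inv} (which is what lets one read invariant-factor sums off from specific corner minors) applies only once the matrix is in the appropriate generic form. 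Your claim that the minimizing index sets are still the top/left-justified ones is exactly what needs proof, and invoking interlacing does not supply it.

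The paper closes this gap by a different route that avoids computing invariant factors of the partial product from minors at all. It introduces the \emph{auxiliary filling} $\{k_{ij}^*\}$ of the pair $(D_\mu, L D_{\widehat{\nu^*}})$: one first passes to $\mu$-$\widehat{\nu^*}$-generic form via a lower-triangular $\widehat{\nu^*}$-admissible $T_{\nu^*}$, so that the determinantal formula of Theorem~\ref{kij-lr} legitimately computes $\{k_{ij}^*\}$ from minors of $LT_{\nu^*} D_{\widehat{\nu^*}}$. A short Cauchy--Binet computation then shows $\|(LT_{\nu^*}D_{\widehat{\nu^*}})_{IJ_k}\| = \|(LD_{\widehat{\nu^*}})_{IJ_k}\|$, and separately the $\nu$-contributions cancel in the difference formula~\eqref{seq-def}, giving $k_{ij}=k_{ij}^*$ for the interior parts with $i\le\sigma$; the edge parts follow by a shape comparison. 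The payoff is that $inv(\mu L \nu_1,\dots,\nu_\sigma)$ is then obtained \emph{for free} from the Sum condition L${\mathbb R}$1 applied to $\{k_{ij}^*\}$, rather than by a delicate minor-minimization argument. This indirection through $\{k_{ij}^*\}$ is the main idea your proposal is missing; once you insert it, the rest of your outline (including the symmetry reduction for the left filling) goes through exactly as the paper does it.
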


Before proving this result, let us show a brief example of the
phenomenon it implies.  We will work, for now, with integer-valued
fillings, though our results do not require this.  Given the
partitions
\[ \mu = (9,5,2,1),\  \nu=(11,6,3,2), \quad \hbox{and} \quad \lambda= (15,10,8,6)  \]
one may check that the following is a \LR, and hence L${\mathbb
R}$-filling, of $\lambda / \mu$ with content $\nu$:

\[ \begin{array}{cccc}
k_{11} = 6 &   & & \\
k_{12} = 2 &k_{22} = 3 &  \\
k_{13} = 2 & k_{23} = 2 & k_{33} = 2 & \\
 k_{14} = 1& k_{24} = 1   & k_{34} = 1 & k_{44} = 2 \\ \hline
 ( \ \ 11 & \ \ 6 & \ \ 3 & 2  \ \ )\end{array} \]

 Using the construction of Theorem~\ref{lr_to_mat} we may use this
 filling to build the matrices:
 \[
M :=  \left[ {\begin{array}{cccc}
t^{9} & 0 & 0 & 0 \\
0 & t^{5} & 0 & 0 \\
0 & 0 & t^{2} & 0 \\
0 & 0 & 0 & t
\end{array}}
 \right]
\]
and
\[
N_1 :=  \left[ {\begin{array}{cccc}
t^{6} & 1 & 0 & 0 \\
0 & t^{2} & 1 & 0 \\
0 & 0 & t^{2} & 1 \\
0 & 0 & 0 & t
\end{array}}
 \right]
, N_2 :=  \left[ {\begin{array}{rccc}
1 & 0 & 0 & 0 \\
0 & t^{3} & 1 & 0 \\
0 & 0 & t^{2} & 1 \\
0 & 0 & 0 & t
\end{array}}
 \right]
, N_3 :=  \left[ {\begin{array}{rrcc}
1 & 0 & 0 & 0 \\
0 & 1 & 0 & 0 \\
0 & 0 & t^{2} & 1 \\
0 & 0 & 0 & t
\end{array}}
 \right]
, N_4 :=  \left[ {\begin{array}{rrrc}
1 & 0 & 0 & 0 \\
0 & 1 & 0 & 0 \\
0 & 0 & 1 & 0 \\
0 & 0 & 0 & t^{2}
\end{array}}
 \right]
\]
So that
\[
N =N_1 N_2 N_3 N_4 =  \left[ {\begin{array}{cccc}
t^{6} & t^{3} & t^{2} & t^{2} \\
0 & t^{5} & 2\,t^{4} &  t^{3} + 2\,t^{4}  \\
0 & 0 & t^{6} & t^4 + t^5 + t^6 \\
0 & 0 & 0 & t^{5}
\end{array}}
 \right].
\]

By Theorem~\ref{lr_to_mat}, and as we can check, we have
\[ inv(M) = \mu= (9,5,2,1), \ inv(N) = \nu =(11,6,3,1), \quad \hbox{and} \quad
inv(MN) = \lambda = (15,10,8,6). \] In fact, the matrix $N$ is in
$\mu$-generic form, insofar as it is upper-triangular and satisfies
Inequalities~\ref{row} and~\ref{mu-gap} of Lemma~\ref{row-ineq}, and
Inequality~\ref{col} of Corollary~\ref{col-ineq}.  We can, in fact,
obtain a matrix in $\mu$-$\widehat{\nu}$-generic form from $N$ above
by adding multiples of column 4 to columns 3,2, and 1 to put zeros
in the top row to the left of the $(1,4)$ entry, and then adding
multiples of column 3 to columns 2 and 1, etc, and then finally
reversing the order of the columns and obtaining:

\[
  \left[ {\begin{array}{cccc}
  t^{2} &0 & 0 & 0\\
  2\,t^{4} + t^{3} &  - t^{3}& 0 & 0 \\
t^{6} + t^{5} + t^{4} &  - t^{5} - t^{4}& t^{6} &0  \\
 t^{5}&  - t^{5} & t^{7} & - t^{11}
\end{array}}
 \right]
\]

We note that in the above matrix, every entry in column $j$ has
order at least $\nu_{(4-j+1)}$.  So, we may factor the above as the
product (after multiply some columns by $-1$):
\[ LD_{\widehat{\nu}} \]
where
\[ L =\left[ {\begin{array}{cccr}
1 & 0 & 0 & 0 \\
2\,t^{2} + t & 1 & 0 & 0 \\
t^{4} + t^{3} + t^{2} & t^{2} + t & 1 & 0 \\
t^{3} & t^{2} & t & 1
\end{array}}
 \right].
\]
In this case, it can be shown that the above matrix is in
$\mu$-$\widehat{\nu}$-generic form, insofar as it is
lower-triangular and satisfies Inequalities~\ref{row}
and~\ref{mu-gap} of Lemma~\ref{row-ineq} and also
Inequalities~\ref{col} and~\ref{nu-gap} of Corollary~\ref{col-ineq}.

So, by Theorem~\ref{symm-fill} we may alternately calculate the
right filling of $\lambda / \mu$ with content $\nu$ by computing the
successive invariant factors (written vertically): \begin{align*}
inv(D_{\mu}L) & = inv\left( \left[ {\begin{array}{cccc}
t^{9} & 0 & 0 & 0 \\
0 & t^{5} & 0 & 0 \\
0 & 0 & t^{2} & 0 \\
0 & 0 & 0 & t
\end{array}}
 \right] \left[ {\begin{array}{cccc}
1 & 0 & 0 & 0 \\
2\,t^{2} + t & 1 & 0 & 0 \\
t^{4} + t^{3} + t^{2} & t^{2} + t & 1 & 0 \\
t^{3} & t^{2} & t & 1
\end{array}}
 \right] \right)\\
 & = inv\left(\left[ {\begin{array}{cccc}
t^9 & 0 & 0 & 0 \\
2\,t^{7} + t^6 & t^5 & 0 & 0 \\
t^{6} + t^{5} + t^{4} & t^{4} + t^3 & t^2 & 0 \\
t^{4} & t^{3} & t^2 & t
\end{array}}
 \right] \right) = \left( \begin{array}{c}9 \\ 5 \\ 2\\ 1
 \end{array} \right) = \mu. \end{align*}

\[ inv(D_{\mu}L \nu_1) = inv\left( \left[ {\begin{array}{cccc}
t^9 & 0 & 0 & 0 \\
2\,t^{7} + t^6 & t^5 & 0 & 0 \\
t^{6} + t^{5} + t^{4} & t^{4} + t^3 & t^2 & 0 \\
t^{4} & t^{3} & t^2 & t\cdot t^{11}
\end{array}}
 \right] \right) = \left( \begin{array}{l} 15 =9 +6 \\ 7 \ =5+2 \\ 4 \
 = 2+2 \\ 2 \ =1 + 1
 \end{array} \right). \]

\[ inv(D_{\mu}L \nu_1 \nu_2) = inv\left( \left[ {\begin{array}{cccc}
t^9 & 0 & 0 & 0 \\
2\,t^{7} + t^6 & t^5 & 0 & 0 \\
t^{6} + t^{5} + t^{4} & t^{4} + t^3 & t^2\cdot t^6 & 0 \\
t^{4} & t^{3} & t^2 \cdot t^6 & t\cdot t^{11}
\end{array}}
 \right]\right) = \left( \begin{array}{l} 15 =9 +6 \\ 10 =5+2 + 3 \\6  \
 = 2+2+2 \\ 3 \ =1 + 1 + 1
 \end{array} \right). \]

 \[ inv(D_{\mu}L \nu_1 \nu_2 \nu_3) = inv\left(\left[ {\begin{array}{cccr}
t^9& 0 & 0 & 0 \\
2\,t^{7} + t^6 & t^5\cdot t^3 & 0 & 0 \\
t^{6} + t^{5} + t^{4} & (t^{4} + t^3)\cdot t^3 & t^2\cdot t^6 & 0 \\
t^{4}& t^{3}\cdot t^3 & t^2 \cdot t^6 & t\cdot t^{11}
\end{array}}
 \right]\right) = \left( \begin{array}{l} 15 =9 +6 \\ 10 =5+2 + 3 \\8  \
 = 2+2+2 + 2 \\ 4\ =1 + 1 + 1 + 1
 \end{array} \right). \]

 \begin{align*} inv(D_{\mu}L \nu_1 \nu_2 \nu_3 \nu_4) =  inv & \left(  \left[  {\begin{array}{cccr}
t^9\cdot t & 0 & 0 & 0 \\
(2\,t^{7} + t^6)\cdot t & t^5\cdot t^3 & 0 & 0 \\
(t^{6} + t^{5} + t^{4}) \cdot t & (t^{4} + t^3)\cdot t^3 & t^2\cdot t^6 & 0 \\
t^{4}\cdot t & t^{3}\cdot t^3 & t^2 \cdot t^6 & t\cdot t^{11}
\end{array}}
 \right] \right) \\
 &  = \left( \begin{array}{l} 15 =9 +6 \\ 10 =5+2 + 3 \\8  \
 = 2+2+2 + 2 \\ 6\ =1 + 1 + 1 + 1 + 2
 \end{array} \right) = \lambda. \end{align*}

However, not only does decreasing the powers of $D_{\widehat{\nu}}$
appearing in the product allow us to calculate the left filling of
$\lambda / \mu$ with content $\nu$ associated to the pair
$(D_{\mu},LD_{\widehat{\nu}})$, but at the same time, decreasing the
powers appearing in $D_{\mu}$ allows us to find the {\em right}
filling of $\lambda / \nu$ with content $\nu$:

\[ inv(LD_{\widehat{\nu}}) =\left( \begin{array}{c}11 \\ 6 \\ 3\\2
 \end{array} \right) = \nu,  \ inv( \mu_1 LD_{\widehat{\nu}}) =
  \left( \begin{array}{l} 15 = 11 + 4 \\
 8 = 6 + 2  \\5 = 3 + 2 \\ 3 \ =2 + 1
 \end{array} \right), \ inv(\mu_2 \mu_1 L D_{\widehat{\nu}}) =
 \left( \begin{array}{l} 15 =11+4\\ 10 = 6 + 2  +2 \\7 \
 =  3 + 2 +2 \\ 4 \ =2 + 1 +1
 \end{array} \right), \]

 \[ inv(\mu_3 \mu_2 \mu_1 L D_{\widehat{\nu}}) =  \left( \begin{array}{l}
 15 =11+4\\ 10 = 6 + 2  +2 \\8 \
 =  3 + 2 +2 +1\\ 5 \ =2 + 1 +1+1
 \end{array} \right), \ inv(\mu_4 \mu_3 \mu_2 \mu_1 L D_{\widehat{\nu}}) =
 \left( \begin{array}{l} 15 =11+4\\ 10 = 6 + 2  +2 \\8 \
 =  3 + 2 +2 +1 \\ 6 \ =2 + 1 +1+1+1
 \end{array} \right) = \lambda. \]

\begin{proof}
We shall prove the results for right fillings.  Since $(D_{\mu}L,
D_{\widehat{\nu}})$ is in the orbit of $(D_{\mu}, L
D_{\widehat{\nu}})$, the result for left fillings will follow by an
appropriate interchange of rows and columns.  Suppose $L$ is some
$\mu$-$\widehat{\nu}$-generic matrix such that $(D_{\mu}, L
D_{\widehat{\nu}})$ is in the orbit of $(M,N)$. By
Proposition~\ref{mu=mu-nu} we may calculate the right filling of the
pair $(M,N)$ using the row indices of $L$ appearing in
Equation~\ref{seq-def}, and the leftmost columns.

Let us fix $i$ and $j$, where $i < j$, and choose some index
$\sigma$ such that $i \leq \sigma$.  Let $\nu^* = (\nu_1, \dots,
\nu_{\sigma}, 0, \dots, 0)$.  Then

\begin{eqnarray*}
k_{1j} + k_{2j} + \dots + k_{ij}
&= &\left\| LD_{\widehat{\nu}} \begin{pmatrix} 1\dots  (j-i-1),(j) \dots r\\
1, 2 \dots  ,r-i+1 \end{pmatrix} \right\| - \left\|
LD_{\widehat{\nu}}\begin{pmatrix} 1\dots
   (j-i),(j+1) \dots r\\
1, 2 \dots  ,r-i+1 \end{pmatrix} \right\| \\
&= &\left\| L \begin{pmatrix} 1\dots    (j-i-1),(j) \dots r\\
1, 2 \dots  ,r-i+1 \end{pmatrix} \right\| +(\nu_{r} + \dots +
\nu_{i+1}+ \nu_i)\\ & \qquad  & \quad \quad - \left\| L
\begin{pmatrix} 1\dots
   (j-i),(j+1) \dots r\\
1, 2 \dots  ,r-i+1 \end{pmatrix} \right\| - (\nu_{r} +
 \dots + \nu_{i+1}+ \nu_i) \\
& = &\left\| L \begin{pmatrix} 1\dots    (j-i-1),(j) \dots r\\
1, 2 \dots  ,r-i+1 \end{pmatrix} \right\|  - \left\| L
\begin{pmatrix} 1\dots
   (j-i),(j+1) \dots r\\
1, 2 \dots  ,r-i+1 \end{pmatrix} \right\|  \\
& = &\left\| (L,\nu_1, \dots , \nu_{\sigma}) \begin{pmatrix} 1\dots    (j-i-1),(j) \dots r\\
1, 2 \dots  ,r-i+1 \end{pmatrix} \right\|  \\
& \qquad & \qquad - \left\| (L,\nu_1, \dots , \nu_{\sigma})
\begin{pmatrix} 1\dots
   (j-i),(j+1) \dots r\\
1, 2 \dots  ,r-i+1 \end{pmatrix} \right\| .
\end{eqnarray*}

Let $\{ k_{ij}^*\}$ denote the right L${\mathbb R}$-filling of the
pair $(D_{\mu}, LD_{\widehat{\nu^*}})$.  To calculate $\{
k_{ij}^*\}$, by Corollary~\ref{col-ineq} we must first put $L$ into
$\mu$-$\widehat{\nu*}$-generic form by multiplying $L$ by a lower
triangular $\widehat{\nu^*}$-admissible matrix $T_{\nu^*}$ so that
$LT_{\nu^*}$ is $\mu$-$\widehat{\nu^*}$-generic. Having done this,
computing the $k_{ij}^*$ for $i<j$ can be accomplished using the
orders of minors $(LT_{\nu^*}D_{\widehat{\nu^*}})_{IJ_{k}}$ for
appropriate index sets $I$. But then
\begin{align*}
 \| (LT_{\nu^*}D_{\widehat{\nu^*}})_{IJ_{k}} \| & = \left\| \sum_{H \subseteq
 I} (L)_{IH} (T_{\nu^*})_{HJ_{k}} (D_{\widehat{\nu^*}})_{J_{k}J_{k}}
 \right\| \\
 \intertext{But by the generic nature of the factor
 $(T_{\nu^*})_{HJ_{k}}$, we have}
 &= \min_{H \subseteq I}\| (L)_{IH} (T_{\nu^*})_{HJ_{k}} (D_{\widehat{\nu^*}})_{J_{k}J_{k}}
 \| \\
 &= \| (L)_{IJ_{k}} (T_{\nu^*})_{J_{k}J_{k}}
 (D_{\widehat{\nu^*}})_{J_{k}J_{k}} \| \\
 & =\| (L)_{IJ_{k}}
 (D_{\widehat{\nu^*}})_{J_{k}J_{k}} \| = \|
 (LD_{\widehat{\nu^*}})_{IJ_{k}}\|. \label{fix-fill}
\end{align*}

Thus, computing orders of minors
$\|(LT_{\nu^*}D_{\widehat{\nu^*}})_{IJ_{k}} \|$, from which the
L${\mathbb R}$-filling of $(D_{\mu}, LD_{\widehat{\nu^*}})$ may be
determined, amounts to calculating the minors
$\|(LD_{\widehat{\nu^*}})_{IJ_{k}}\|$.  This, combined with the
above calculation, proves that $k_{ij} = k_{ij}^*$ for all $i$, $1
\leq i \leq \sigma$, and $j$ such that $i < j \leq r$. The parts of
the fillings $k_{ii}$ and $k_{ii}^*$ are shown equal for $1 \leq i
\leq \sigma$ by noting that by Theorem~\ref{symm-fill} the shapes of
$inv(D_{\mu}LD_{\widehat{\nu}})$ and
$inv(D_{\mu}LD_{\widehat{\nu^*}})$ are the same in rows $1$ through
$\sigma$, and since no parts $k_{ij}$, for $i > \sigma$ appear above
row $\sigma +1$.  So, having shown for the interior $(i,j)$ (where
$i < j$) that $k_{ij} = k_{ij}^*$, the equality of the shapes of
$inv(D_{\mu}LD_{\widehat{\nu}})$ and
$inv(D_{\mu}LD_{\widehat{\nu^*}})$  in rows $1$ through $\sigma$
forces $k_{ii} = k_{ii}^*$, for $1 \leq i \leq \sigma$.

Finally, the $j$th row of the invariant partition $inv(D_{\mu} L
D_{\widehat{\nu^*}})=inv(\mu L \nu_1, \dots, \nu_{\sigma})$ is
$\mu_j + k_{1j}^* + \dots + k_{\sigma j}^*$ since $\nu^* = (\nu_1,
\dots, \nu_{\sigma}, t^0, \dots, t^0)$.  But then this row also
equals $\mu_j + k_{1j} + \dots  + k_{\sigma j}$, which proves the
Theorem.
\end{proof}

An interesting consequence of the above proof is that invariants
$k_{ij}$ for $i< j$ are determined entirely by the
$\mu$-$\widehat{\nu}$-generic matrix $L$, even though this matrix is
invertible. By the above it is clear that any
$\mu$-$\widehat{\nu}$-generic matrix appearing in the orbit of
($M,N)$ will yield the same fillings.  As a corollary to the proof
of the above Theorem we record the following:

\begin{cor}Suppose $L$ is a $\mu$-$\widehat{\nu}$-generic matrix
with respect to ${\mathbb R}$-partitions $\mu$ and $\nu$ of length
$r$, where $\nu = (\nu_1, \dots , \nu_r)$.  Let $\nu^*$ be the
${\mathbb R}$-partition
\[ \nu^* = (\nu_1, \nu_2, \dots, \nu_{\sigma}, \alpha, 0, \dots, 0),
\]
where \label{same-same}
\[ 0 < \alpha \leq \nu_{\sigma+1}. \]
Let $\{ k_{ij}^* \}$ denote the right filling associated to the pair
$(D_{\mu}, LD_{\widehat{\nu^*}})$.  Then the pairs $(D_{\mu},
LD_{\widehat{\nu}})$ and $(D_{\mu}, LD_{\widehat{\nu^*}})$ have the
same fillings for $\nu_1$ through $\nu_{\sigma}$.  That is,
\[ k_{ij} = k_{ij}^*, \quad 1 \leq i \leq \sigma, \ \ i \leq j \leq
r. \] Analogous results hold for left fillings.
\end{cor}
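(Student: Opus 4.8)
The cleanest route is to bypass any fresh determinantal bookkeeping and instead read the result off the L${\mathbb R}$-sequence description in Theorem~\ref{symm-fill}. Recall that for a $\mu$-$\widehat{\nu}$-generic $L$ with $(D_{\mu}, LD_{\widehat{\nu}})$ pair-equivalent to $(M,N)$, that theorem identifies the L${\mathbb R}$ sequence of the right filling $\{k_{ij}\}$ as $\lambda^{(0)} \subseteq \lambda^{(1)} \subseteq \cdots \subseteq \lambda^{(r)}$, where $\lambda^{(i)} = inv(\mu L \nu_1, \dots, \nu_i) = inv(D_{\mu} L D_{\widehat{\nu^{(i)}}})$ and $\nu^{(i)} := (\nu_1, \dots, \nu_i, 0, \dots, 0)$; since the $i$-strip of the filling is the skew shape $\lambda^{(i)}/\lambda^{(i-1)}$, we have $k_{ij} = \lambda^{(i)}_j - \lambda^{(i-1)}_j$ for all $1 \leq i \leq j \leq r$ (in particular $\lambda^{(0)} = inv(D_{\mu}L) = \mu$, which mentions no $\nu$ at all). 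The first step I would record is that $\nu^* = (\nu_1, \dots, \nu_\sigma, \alpha, 0, \dots, 0)$ is a legitimate ${\mathbb R}$-partition: this is exactly where the hypothesis $0 < \alpha \leq \nu_{\sigma+1}$ enters, forcing $\nu_\sigma \geq \nu_{\sigma+1} \geq \alpha \geq 0$, and in turn guaranteeing that $inv(LD_{\widehat{\nu^*}}) = \nu^*$ (since $L$ is invertible over $R$ and $\widehat{\nu^*}$ is the increasing rearrangement of $\nu^*$).

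Next I would apply Theorem~\ref{symm-fill} to the pair $(D_{\mu}, LD_{\widehat{\nu^*}})$. Because $L$ was constructed $\mu$-$\widehat{\nu}$-generic rather than $\mu$-$\widehat{\nu^*}$-generic, this is handled exactly as in the closing part of that theorem's proof: choose a generic lower-triangular $\widehat{\nu^*}$-admissible matrix $T_{\nu^*}$ (available by Corollary~\ref{col-ineq}) so that $LT_{\nu^*}$ is $\mu$-$\widehat{\nu^*}$-generic and $(D_{\mu}, LT_{\nu^*}D_{\widehat{\nu^*}})$ is pair-equivalent to $(D_{\mu}, LD_{\widehat{\nu^*}})$; the genericity of $T_{\nu^*}$ makes it cancel out of all the relevant minor orders, by the computation culminating in Equation~\ref{fix-fill}. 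Hence Theorem~\ref{symm-fill} identifies the L${\mathbb R}$ sequence of the right filling $\{k^*_{ij}\}$ of $(D_{\mu}, LD_{\widehat{\nu^*}})$ as $\lambda^{*(i)} := inv(D_{\mu} L D_{\widehat{\nu^{*(i)}}})$, with $\nu^{*(i)} := (\nu^*_1, \dots, \nu^*_i, 0, \dots, 0)$, so that $k^*_{ij} = \lambda^{*(i)}_j - \lambda^{*(i-1)}_j$.

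The heart of the argument is then a single observation: for every $0 \leq i \leq \sigma$ the truncations coincide, $\nu^{*(i)} = (\nu_1, \dots, \nu_i, 0, \dots, 0) = \nu^{(i)}$, because $\nu^*$ and $\nu$ have the same first $\sigma$ entries and the truncation at level $i \leq \sigma$ keeps only those --- the slot $\sigma+1$, the sole place where $\nu^*$ differs from $\nu$, is discarded. Therefore $\lambda^{*(i)} = inv(D_{\mu} L D_{\widehat{\nu^{(i)}}}) = \lambda^{(i)}$ for all $0 \leq i \leq \sigma$, and subtracting consecutive terms gives $k^*_{ij} = \lambda^{*(i)}_j - \lambda^{*(i-1)}_j = \lambda^{(i)}_j - \lambda^{(i-1)}_j = k_{ij}$ for $1 \leq i \leq \sigma$ and $i \leq j \leq r$, which is precisely the assertion (note that rows $1$ through $\sigma$ of either filling involve only parts indexed by $i \leq \sigma$, so matching the first $\sigma$ strips really does pin those rows down completely). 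The left-filling statement follows verbatim by interchanging the roles of rows and columns, i.e.\ by running the same argument starting from $(D_{\mu}L, D_{\widehat{\nu}})$ instead of $(D_{\mu}, LD_{\widehat{\nu}})$.

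The one step deserving care --- and the only place where anything beyond the ${\mathbb R}$-partition check involves $\alpha$ --- is the passage in the second paragraph: confirming that introducing a nonzero entry in slot $\sigma+1$ of the content does not obstruct the conversion of $L$ into $\mu$-$\widehat{\nu^*}$-generic form, nor the cancellation of $T_{\nu^*}$ from all the minor orders. This is routine, since $\widehat{\nu^*}$-admissibility only bounds the orders of the sub-diagonal entries of $T_{\nu^*}$ and the Equation~\ref{fix-fill} cancellation is insensitive to the diagonal scaling; but it is the only point where the argument is doing more than the purely formal substitution $\nu^{(i)} \leftrightarrow \nu^{*(i)}$, and so is where I would be most careful.
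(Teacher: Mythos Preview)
Your proof is correct and takes essentially the same approach as the paper: the corollary is stated there as a ``corollary to the proof'' of Theorem~\ref{symm-fill}, and your argument simply repackages that proof's key steps (the $T_{\nu^*}$ cancellation of Equation~\ref{fix-fill}, together with the observation that truncating $\nu$ and $\nu^*$ at level $i\le\sigma$ gives the same partition) in the language of the L${\mathbb R}$ sequence rather than the determinantal formula for the individual $k_{ij}$. The one place to be slightly careful is your sentence ``Hence Theorem~\ref{symm-fill} identifies the L${\mathbb R}$ sequence \ldots\ as $\lambda^{*(i)}:=inv(D_\mu L D_{\widehat{\nu^{*(i)}}})$'': applying the theorem to $LT_{\nu^*}$ literally gives $inv(D_\mu LT_{\nu^*}D_{\widehat{\nu^{*(i)}}})$, and Equation~\ref{fix-fill} is a statement about minor orders, not invariant partitions of the intermediate products. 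The paper avoids this by comparing the interior parts $k_{ij}$ and $k^*_{ij}$ ($i<j$) directly---both are read off from identical minors of $L$ alone---and then recovers the edge parts for $i\le\sigma$ from L${\mathbb R}$1 using $\nu^*_i=\nu_i$; this is exactly what your phrase ``handled exactly as in the closing part of that theorem's proof'' points to, so the substance is the same.
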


So, the content of Theorem~\ref{symm-fill} is that given a
$\mu$-$\widehat{\nu}$-generic matrix $L$, we can calculate the right
filling of the pair $(D_{\mu}, LD_{\widehat{\nu}})$ by the
L${\mathbb R}$ sequence of partitions formed by the partial
diagonals on the right:
\[ inv(\mu L \nu_1,\dots ,\nu_r), inv(\mu L \nu_1,\dots ,\nu_{r-1}),
\dots, inv(\mu L \nu_1 \nu_2), inv(\mu L \nu_1) ,inv(\mu L) \] where
the $i$-strip of the filling is given by the skew-shape
\[ inv(\mu L \nu_1,\dots ,\nu_i) / inv(\mu L \nu_1,\dots
,\nu_{i-1}),\] and with the same matrix $L$ we may calculate an
associated left-filling of $(D_{\mu}, L D_{\widehat{\nu}})$ by the
sequence of partitions formed by the partial diagonals on the left:
\[ inv(\mu_{r}, \dots, \mu_1, L \nu), inv(\mu_{r-1}, \dots, \mu_1, L
\nu), \dots , inv(\mu_2 \mu_1, L \nu), inv(\mu_1, L \nu), inv(L
\nu).
\]

We shall prove below that this association between left and right
fillings depends only on the fillings, and not on the particular
matrix realization of the filling.

However, even with these results we can reach some interesting
conclusions regarding invariants for matrix pairs over our valuation
rings, and facts about L${\mathbb R}$-fillings (and classical
\LR-fillings) these results imply.  For example, the following is
easily proved:

\begin{prop}  Let $(M,N) \in M_{r}({\cal F})^2$ be a pair of
full-rank matrices such that $inv(M) = \mu$, $inv(N) =\nu$, and
$inv(MN) = \lambda$, and suppose $\{ k_{ij} \}$ is the associated
right L${\mathbb R}$-filling of the pair.  Let $D(\alpha) =
diag(\alpha, \alpha, \dots, \alpha)$ be a scalar matrix, for some
$\alpha \in {\cal F}$.  Let $\{ k_{ij}^* \}$ denote the right
filling for the pair $(M,N\cdot D(\alpha))$.  Then
\[ k_{ij}^* = k_{ij}, \ \ \hbox{for} \ \ i < j, \]
and
\[ k_{ii}^* = k_{ii} + \alpha. \]
An analogous result holds for the left filling of the pair.
\end{prop}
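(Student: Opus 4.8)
The plan is to exploit that $D(\alpha)$ is a scalar matrix, so $N D(\alpha)=\alpha N$ with $\alpha$ central, and to track how the determinantal descriptions of the filling move when the right-hand matrix is multiplied by $\alpha$. First I would reduce to $\alpha=t^{a}$ with $a=\|\alpha\|\in{\mathbb R}$: writing $\alpha=t^{a}u$ with $u\in R^{\times}$ gives $D(\alpha)=D(t^{a})D(u)$ with $D(u)\in GL_{r}(R)$, so $(M,ND(\alpha))=(1,1,D(u^{-1}))\cdot(M,ND(t^{a}))$ is pair equivalent to $(M,ND(t^{a}))$ and hence, by the Uniqueness theorem, has the same fillings; thus I may assume $\alpha=t^{a}$, and then the ``$+\alpha$'' of the statement reads ``$+a$''. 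The single arithmetic fact used throughout is that multiplying an $r\times r$ matrix by the scalar $t^{a}$ multiplies the determinant of each $k\times k$ minor by $t^{ak}$, i.e.\ raises the order of every size-$k$ minor by $ak$.

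For the right filling I would note that if $N^{*}=QNT^{-1}$ is a $\mu$-generic matrix associated to $N$, with $Q=Q_{U}Q_{L}=\widehat{Q_{L}}\widehat{Q_{U}}$ $\mu$-admissible, then $t^{a}N^{*}=Q\,(ND(t^{a}))\,T^{-1}$ is a $\mu$-generic matrix associated to $ND(t^{a})$: it is still upper triangular ($t^{a}$ being central), and each defining inequality --- Inequalities~\ref{row} and~\ref{mu-gap} of Lemma~\ref{row-ineq} and Inequality~\ref{col} of Corollary~\ref{col-ineq} --- compares orders of minors of one fixed size $k$ up to a term not involving the matrix, so replacing the matrix by $t^{a}$ times it adds the same constant $ak$ to both sides and the inequality survives. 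Theorem~\ref{kij-lr} then computes the right filling $\{k^{*}_{ij}\}$ of $(M,ND(t^{a}))$ from $t^{a}N^{*}$ via Equation~\ref{seq-def}. For $i<j$ the minors $\|((j-i)^{\wedge},\dots,(j-1)^{\wedge})\|$ and $\|((j-i+1)^{\wedge},\dots,(j)^{\wedge})\|$ both use $r-i$ rows, hence $r-i$ columns, so the common factor $t^{a(r-i)}$ cancels in the difference and $\sum_{s=1}^{i}k^{*}_{sj}=\sum_{s=1}^{i}k_{sj}$; telescoping gives $k^{*}_{ij}=k_{ij}$ for $i<j$. For $i=j$ I would use the reduced diagonal formula derived in the proof of Theorem~\ref{kij-lr}, $k_{ii}=\|(1,(i+1),\dots,r)\|-\|((i+1),\dots,r)\|$, whose two minors have sizes $r-i+1$ and $r-i$: the scaling contributes $t^{a(r-i+1)}$ to the first and $t^{a(r-i)}$ to the second, so the difference gains exactly $a$ and $k^{*}_{ii}=k_{ii}+a$. (This is consistent with L${\mathbb R}$1, since $inv(M)=\mu$ is unchanged while $inv(ND(t^{a}))=\nu+(a,\dots,a)$ and $inv(M\cdot ND(t^{a}))=\lambda+(a,\dots,a)$.)

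For the left filling I would run the parallel argument through Theorem~\ref{symm-fill}. Fix a lower-triangular $\mu$-$\widehat{\nu}$-generic $L$ with $(D_{\mu},LD_{\widehat{\nu}})$ in the orbit of $(M,N)$, set $\nu'=\nu+(a,\dots,a)$ so that $D_{\widehat{\nu'}}=t^{a}D_{\widehat{\nu}}$, and observe $(D_{\mu},LD_{\widehat{\nu'}})$ lies in the orbit of $(M,ND(t^{a}))$. The same $L$ is $\mu$-$\widehat{\nu'}$-generic, because the conditions of Lemma~\ref{row-ineq} and Corollary~\ref{col-ineq} involve $\widehat{\nu}$ only through the differences $\widehat{\nu}_{i}-\widehat{\nu}_{j}$ (in the $\widehat{\nu'}$-admissibility of the triangular factors) and through $|\widehat{\nu}_{J}|-|\widehat{\nu}_{H}|$ with $|J|=|H|$ (in Inequality~\ref{nu-gap}), both unchanged under a uniform shift of the parts of $\nu$. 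By Theorem~\ref{symm-fill}, the $i$-strip of the left filling of $(M,ND(t^{a}))$ is $inv(\mu_{i}\cdots\mu_{1}L\nu')\big/inv(\mu_{i-1}\cdots\mu_{1}L\nu')$; since $(\mu_{i}\cdots\mu_{1}L\nu')=t^{a}\,(\mu_{i}\cdots\mu_{1}L\nu)$, this strip is the one for $(M,N)$ with both bounding ${\mathbb R}$-partitions raised uniformly by $a$, so the row lengths are unchanged and $m^{*}_{ij}=m_{ij}$ for all $i\le j$ --- the left filling is unchanged, now viewed inside $(\lambda+(a,\dots,a))/(\nu+(a,\dots,a))$ with content $\mu$.

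The one place demanding care is the minor-size bookkeeping in Equation~\ref{seq-def}: one must check that for the \emph{interior} parts the two minors there genuinely have the same size, whereas after the cancellation built into the reduced formula for $k_{ii}$ the two minors differ in size by exactly \emph{one}. That single unit of size discrepancy is precisely what yields the shift by $\alpha$ in the edge parts and its absence in the interior parts; everything else is formal manipulation with the central scalar $t^{a}$.
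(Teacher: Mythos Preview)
Your proof is correct. The paper's argument takes the $\mu$-$\widehat{\nu}$-generic route throughout: it observes that the same $L$ is $\mu$-$\widehat{(\nu+(\alpha))}$-generic (since $\widehat{\nu}$-admissibility and Inequality~\ref{nu-gap} depend only on differences of parts of $\nu$), appeals to the remark after Theorem~\ref{symm-fill} that the interior parts $k_{ij}$ with $i<j$ are determined entirely by $L$, and then recovers the edge-part shift from the L${\mathbb R}$1 sum identity $k_{ii}=\nu_i-(k_{i,i+1}+\cdots+k_{ir})$. Your second paragraph takes a genuinely different path for the right filling: you stay with the $\mu$-generic matrix $N^{*}$ and the determinantal formula of Theorem~\ref{kij-lr}, and use minor-size bookkeeping to see that under $N^{*}\mapsto t^{a}N^{*}$ the two $(r-i)\times(r-i)$ minors in Equation~\ref{seq-def} shift by the same amount when $i<j$, while the reduced diagonal formula $k_{ii}=\|(1,(i+1),\dots,r)\|-\|((i+1),\dots,r)\|$ involves minors of sizes $r-i+1$ and $r-i$, yielding exactly the shift by $a$. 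This approach is more elementary in that it bypasses the $\mu$-$\widehat{\nu}$-generic machinery and Theorem~\ref{symm-fill} entirely; the paper's proof is terser but relies on that machinery already being in place. Your third paragraph (for the left filling) is essentially the paper's own argument, and your reduction from $\alpha\in{\cal F}$ to $\alpha=t^{a}$ via a unit factor absorbed by pair equivalence is the right way to make the ``$+\alpha$'' in the statement precise.
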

\begin{proof}
Let $\nu + (\alpha)$ denote the ${\mathbb R}$-partition
\[ \nu + (\alpha) = (\nu_1 + \alpha, \nu_2  + \alpha, \dots, \nu_r +
\alpha). \]  If $(D_{\mu} L D_{\widehat{\nu}})$ is a
$\mu$-$\widehat{\nu}$-generic pair in the orbit of $(M,N)$, then the
matrix $L$ is $\widehat{\nu + (\alpha)}$-generic.  This follows from
noting that any $\widehat{\nu}$-admissible matrix is $\widehat{\nu +
(\alpha)}$-admissible, and $D(\alpha) D_{\widehat{\nu}} =
D_{\widehat{\nu + (\alpha)}}$. Therefore, since we may now compute
the interior parts $k_{ij}^*$ for the pair $(D_{\mu} L
D_{\widehat{\nu}})$ using the same matrix $L$ that we may use for
the pair $(M,N)$, we see both pairs have the same interior parts
(which are determined entirely by the matrix $L$).  The result on
edge parts follows as:
\begin{align*} k_{ii}^* & = \nu_{i} +\alpha - (k_{i,i+1}^* + \dots k_{i,r}^* )\\
 & =\nu_{i} +\alpha - (k_{i,i+1} + \dots k_{i,r} ) = k_{ii} +
 \alpha. \end{align*}
\end{proof}

This result has a number of simple corollaries regarding L${\mathbb
R}$-fillings and matrices over valuation rings:

\begin{cor} Let $\nu$ be an ${\mathbb R}$-partition, and let $\{ k_{ij} \}$ be
a L${\mathbb R}$-filling of some skew shape $\lambda / \mu$, with
content $\nu$.  Then the mapping
\[ k_{ij} \mapsto k_{ij}', \ \ \hbox{for} \ i < j, \]
and
\[ k_{ii} + \alpha \mapsto k_{ii}',  \ \ \hbox{for all $i$,} \ 1 \leq i \leq r, \]
is a bijection from the set L${\mathbb R}(\mu, \nu; \lambda)$ of
L${\mathbb R}$-fillings of type $(\mu,\nu; \lambda)$, into the set
L${\mathbb R}(\mu, \nu+(\alpha); \lambda + (\alpha))$ of L${\mathbb
R}$-fillings of type $(\mu, \nu+(\alpha); \lambda + (\alpha))$.  An
analogous result holds for left fillings. \label{fill-biject}
\end{cor}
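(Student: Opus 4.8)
The plan is to treat the statement as a routine verification once one notices that the displayed map has an obvious candidate inverse. Write $\Phi_\alpha$ for the map sending a collection $\{k_{ij}\}$ to $\{k_{ij}'\}$ with $k_{ij}'=k_{ij}$ for $i<j$ and $k_{ii}'=k_{ii}+\alpha$ for all $i$; then $\Phi_{-\alpha}\circ\Phi_\alpha$ and $\Phi_\alpha\circ\Phi_{-\alpha}$ are both the identity on tuples, so it is enough to show that $\Phi_\alpha$ carries $L{\mathbb R}(\mu,\nu;\lambda)$ into $L{\mathbb R}(\mu,\nu+(\alpha);\lambda+(\alpha))$ and, symmetrically, that $\Phi_{-\alpha}$ carries $L{\mathbb R}(\mu,\nu+(\alpha);\lambda+(\alpha))$ back into $L{\mathbb R}(\mu,\nu;\lambda)$. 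First I would record that $\nu+(\alpha)$ and $\lambda+(\alpha)$ are again ${\mathbb R}$-partitions, since adding a constant preserves the weakly decreasing condition, and that $|\mu|+|\nu+(\alpha)|=|\mu|+|\nu|+r\alpha=|\lambda|+r\alpha=|\lambda+(\alpha)|$, so the target triple has the correct type.

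The core of the argument is then to check conditions L${\mathbb R}$1--L${\mathbb R}$4 for $\{k_{ij}'\}$ against $(\mu,\nu+(\alpha);\lambda+(\alpha))$, granting them for $\{k_{ij}\}$ against $(\mu,\nu;\lambda)$. Condition L${\mathbb R}$1 is immediate: in each of the sums $\mu_j+\sum_{s=1}^{j}k_{sj}$ and $\sum_{s=i}^{r}k_{is}$ exactly one edge part ($k_{jj}$, respectively $k_{ii}$) occurs, so each increases by exactly $\alpha$, matching $\lambda_j+\alpha$ and $\nu_i+\alpha$. Condition L${\mathbb R}$2 is untouched, since the interior parts are unchanged and no inequality is imposed on edge parts. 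For L${\mathbb R}$3 and L${\mathbb R}$4 the only thing to watch is which edge parts appear on each side: on the left of L${\mathbb R}$3 the part $k_{jj}$ appears precisely when $i=j$, and on the right the part $k_{(j-1)(j-1)}$ appears precisely when $i=j$ as well, so for $i<j$ neither side changes while for $i=j$ both sides increase by $\alpha$; in L${\mathbb R}$4 the part $k_{(i+1)(i+1)}$ always occurs on the left and $k_{ii}$ always occurs on the right, so again both sides increase by $\alpha$. In every case the inequality is preserved, so $\Phi_\alpha(\{k_{ij}\})\in L{\mathbb R}(\mu,\nu+(\alpha);\lambda+(\alpha))$; replacing $\alpha$ by $-\alpha$ gives the reverse inclusion, and the analogue for left fillings follows from the symmetric bookkeeping.

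An alternative route, perhaps the one anticipated by the placement of this corollary, is to deduce it from the preceding Proposition on scalar twists together with Theorem~\ref{lr_to_mat} and the Uniqueness Theorem: every filling in $L{\mathbb R}(\mu,\nu;\lambda)$ is realized by a pair $(M,N)$, the twisted pair $(M,N\cdot D(\alpha))$ has invariants $\mu$, $\nu+(\alpha)$, $\lambda+(\alpha)$ and, by that Proposition, right filling $\Phi_\alpha(\{k_{ij}\})$, which is therefore an L${\mathbb R}$-filling of the twisted type, and surjectivity follows by twisting instead by $D(-\alpha)$. I do not expect a serious obstacle in either approach. In the combinatorial route the one point requiring care is the edge-part bookkeeping in L${\mathbb R}$3 and L${\mathbb R}$4 described above; in the matrix route the one point requiring care is that the pair produced by Theorem~\ref{lr_to_mat} genuinely realizes the prescribed filling, and not merely the prescribed invariants. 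I would present the combinatorial verification, since it keeps the corollary self-contained.
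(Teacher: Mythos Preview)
Your combinatorial verification is correct, and your bookkeeping for L${\mathbb R}$3 and L${\mathbb R}$4 is accurate: in L${\mathbb R}$3 the edge parts $k_{jj}$ and $k_{(j-1)(j-1)}$ appear on the left and right respectively exactly when $i=j$, and in L${\mathbb R}$4 the edge parts $k_{(i+1)(i+1)}$ and $k_{ii}$ always appear, so every inequality shifts by the same amount on both sides. The inverse-map argument with $\Phi_{-\alpha}$ is the right way to get bijectivity.

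The paper, however, does not give a standalone proof: the statement is presented as an immediate corollary of the preceding Proposition on scalar twists $(M,N)\mapsto(M,N\cdot D(\alpha))$, so the intended route is your ``alternative'' matrix route. Your observation about that route is well taken: to cover \emph{every} filling in $L{\mathbb R}(\mu,\nu;\lambda)$ one needs that the pair constructed in Theorem~\ref{lr_to_mat} actually realizes the prescribed filling (not merely the prescribed invariant partitions), a point the paper does not make explicit at this juncture. Your direct verification sidesteps this dependency entirely and keeps the corollary self-contained; the matrix route, on the other hand, explains why the corollary sits where it does and ties the combinatorial shift to the scalar action on pairs. Either argument is fine, but yours is the cleaner one to write down here.
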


Interpreting the above result in terms of classical \LR-fillings
results in the following.

\begin{cor} Let $(\mu, \nu ; \lambda)$ be a triple of partitions of
non-negative integers.  Let $\alpha$ be some positive integer.  Then
\[ c_{\mu, \nu}^{\lambda} = c_{\mu, \nu+ (\alpha)}^{\lambda +
(\alpha)} = c_{\mu + (\alpha), \nu}^{\lambda + (\alpha)}, \] where
$c_{\mu, \nu}^{\lambda}$ is the \LR\ coefficient of the triple
$(\mu, \nu; \lambda)$, denoting the number of (non-negative integer
valued) \LR-fillings of $\lambda / \mu$ with content $\nu$.
\end{cor}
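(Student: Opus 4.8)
The plan is to deduce this from Corollary~\ref{fill-biject}, after pinning down exactly which \LR\ fillings the coefficient $c_{\mu,\nu}^{\lambda}$ counts. When $\mu,\nu,\lambda$ are partitions of non-negative integers, a classical \LR\ filling of $\lambda/\mu$ with content $\nu$ is precisely an element $\{k_{ij}\}\in L{\mathbb R}(\mu,\nu;\lambda)$ that is integer-valued and has $k_{ii}\geq 0$ for every $i$: the interior parts $k_{ij}$ ($i<j$) are non-negative already by membership in $L{\mathbb R}(\mu,\nu;\lambda)$, and the conditions L${\mathbb R}1$--L${\mathbb R}4$ are exactly the shape--content, column-strictness, and lattice-word requirements defining a classical filling. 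Thus $c_{\mu,\nu}^{\lambda}$ equals the number of such "classical" members of $L{\mathbb R}(\mu,\nu;\lambda)$, and likewise $c_{\mu,\nu+(\alpha)}^{\lambda+(\alpha)}$ counts the classical members of $L{\mathbb R}(\mu,\nu+(\alpha);\lambda+(\alpha))$.

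Next I would observe that the bijection $\Phi_\alpha\colon L{\mathbb R}(\mu,\nu;\lambda)\to L{\mathbb R}(\mu,\nu+(\alpha);\lambda+(\alpha))$ of Corollary~\ref{fill-biject} --- namely $k_{ij}\mapsto k_{ij}$ for $i<j$ and $k_{ii}\mapsto k_{ii}+\alpha$ --- restricts to a bijection between the classical fillings on the two sides. Since $\alpha$ is a positive integer, both $\Phi_\alpha$ and $\Phi_\alpha^{-1}$ send integer-valued fillings to integer-valued fillings, and the interior parts are untouched. In the forward direction $k_{ii}\geq 0$ plainly implies $k_{ii}+\alpha\geq 0$, so $\Phi_\alpha$ maps classical fillings to classical fillings. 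For the converse, let $\{k_{ij}'\}$ be a classical member of $L{\mathbb R}(\mu,\nu+(\alpha);\lambda+(\alpha))$. By condition L${\mathbb R}4$ taken with $j=i$ we have $k_{11}'\geq k_{22}'\geq\cdots\geq k_{rr}'$, while L${\mathbb R}1$ gives $k_{rr}'=\nu_r+\alpha$; since $\nu_r\geq 0$ this forces $k_{ii}'\geq\alpha$ for all $i$, so $\Phi_\alpha^{-1}$ returns a filling whose edge parts $k_{ii}'-\alpha$ are still non-negative, i.e. again a classical filling. Hence $\Phi_\alpha$ matches the two sets of classical fillings bijectively, and $c_{\mu,\nu}^{\lambda}=c_{\mu,\nu+(\alpha)}^{\lambda+(\alpha)}$.

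The equality $c_{\mu,\nu}^{\lambda}=c_{\mu+(\alpha),\nu}^{\lambda+(\alpha)}$ is obtained by the identical argument applied to the left-filling version of Corollary~\ref{fill-biject}: there $\alpha$ is added to the edge parts $m_{ii}$ of the left filling (whose content is $\mu$), the same monotonicity $m_{11}\geq\cdots\geq m_{rr}$ holds, and $m_{rr}=\mu_r\geq 0$, so after the shift the edge parts lie at or above $\alpha$ and the restriction to classical fillings is again a bijection. (Alternatively, one may combine the first equality, applied to the triple $(\nu,\mu;\lambda)$, with the symmetry $c_{\mu,\nu}^{\lambda}=c_{\nu,\mu}^{\lambda}$ of \LR\ coefficients.)

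The one point requiring care --- the main obstacle --- is the converse half of the restriction: a priori $\Phi_\alpha^{-1}$ might drag some edge part below $0$ and so carry a classical filling of the shifted triple to an honest L${\mathbb R}$ filling of $(\mu,\nu;\lambda)$ that is not classical. What rules this out is exactly the lattice-word condition L${\mathbb R}4$, which forces $k_{ii}'\geq k_{rr}'=\nu_r+\alpha\geq\alpha$ in the shifted filling; everything else is routine bookkeeping about integrality and non-negativity of the parts.
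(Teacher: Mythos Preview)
Your proof is correct and follows exactly the route the paper intends: the paper states this corollary immediately after Corollary~\ref{fill-biject} with only the remark that one is ``interpreting the above result in terms of classical \LR-fillings,'' and you have simply spelled out that interpretation carefully, including the key observation (via L${\mathbb R}4$ and $k_{rr}'=\nu_r+\alpha$) that the inverse shift cannot push any edge part below zero. There is nothing to add.
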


For future use, we also record the following observation:

\begin{cor}
Given any matrix pair $(M,N) \in M_{r}({\cal F})^2$, both of full
rank, there exist real numbers $\alpha, \beta \in {\mathbb R}$ such
that the left and right fillings of the pair
\[ (D(\beta)M,ND(\alpha)) \] \label{diag-shift}
are all non-negative. \end{cor}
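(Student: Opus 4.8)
The plan is to reduce the statement to controlling the edge parts of the two fillings. The interior parts $k_{ij}$, $m_{ij}$ (with $i<j$) of any L$\mathbb{R}$-filling are automatically non-negative by condition (L$\mathbb{R}$2), so only the edge parts $k_{ii}$ and $m_{ii}$ can be negative, and by the word condition (L$\mathbb{R}$4) they satisfy $k_{11}\ge\cdots\ge k_{rr}$ and $m_{11}\ge\cdots\ge m_{rr}$. The mechanism is the one behind the Proposition above: right-multiplying $N$ by a scalar matrix $D(\alpha)$ adds $\alpha$ to every edge part of the right filling without changing the interior parts, and left-multiplying $M$ by $D(\beta)$ does the symmetric thing to the left filling. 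So I expect the choices $\alpha:=\max\{0,-k_{rr}\}$ and $\beta:=\max\{0,-m_{rr}\}$ to work, and the task is to justify these shifts on both sides simultaneously.

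To do this I would pass to a $\mu$-$\widehat{\nu}$-generic matrix. By the corollary following the definition of $\mu$-$\widehat{\nu}$-generic matrices, choose a lower-triangular $\mu$-$\widehat{\nu}$-generic $L$ together with a triple $P,Q,T\in GL_r(R)$ such that $PMQ^{-1}=D_\mu$ and $QNT^{-1}=LD_{\widehat{\nu}}$. Since $D(\alpha)$ and $D(\beta)$ are scalar, hence central, the same triple gives
\[
 P\bigl(D(\beta)M\bigr)Q^{-1}=D(\beta)D_\mu=D_{\mu'},\qquad
 Q\bigl(ND(\alpha)\bigr)T^{-1}=LD_{\widehat{\nu}}D(\alpha)=LD_{\widehat{\nu'}},
\]
where $\mu'=\mu+(\beta)$ and $\nu'=\nu+(\alpha)$; thus $(D_{\mu'},LD_{\widehat{\nu'}})$ is pair-equivalent to $(D(\beta)M,ND(\alpha))$.

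The key step is to check that the same $L$ is $\mu'$-$\widehat{\nu'}$-generic. Admissibility is unaffected: $D_{\mu'}BD_{\mu'}^{-1}=(D(\beta)D_\mu)B(D(\beta)D_\mu)^{-1}=D_\mu BD_\mu^{-1}$, so $B$ is $\mu'$-admissible iff it is $\mu$-admissible, and likewise $D_{\widehat{\nu'}}^{-1}BD_{\widehat{\nu'}}=D_{\widehat{\nu}}^{-1}BD_{\widehat{\nu}}$ for $\widehat{\nu'}$ versus $\widehat{\nu}$. Moreover, in Inequalities~\ref{mu-gap} and~\ref{nu-gap} the partitions enter only through the differences $|\mu_I|-|\mu_H|$ and $|\widehat{\nu}_J|-|\widehat{\nu}_H|$ between index sets of equal length, which do not change when $\mu$ or $\nu$ is translated by a constant, and Inequalities~\ref{row} and~\ref{col} do not involve $\mu$ or $\nu$ at all. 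Hence $L$ is a $\mu'$-$\widehat{\nu'}$-generic matrix appearing in the orbit of $(D(\beta)M,ND(\alpha))$, so by the remark following Theorem~\ref{symm-fill} (the fillings are determined by such an $L$ alone) the interior parts $k_{ij}$ and $m_{ij}$, $i<j$, of the right and left fillings of $(D(\beta)M,ND(\alpha))$ agree with those of $(M,N)$.

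It then only remains to read off the edge parts using (L$\mathbb{R}$1). The right filling of $(D(\beta)M,ND(\alpha))$ has content $inv(ND(\alpha))=\nu+(\alpha)$, so its edge parts equal $(\nu_i+\alpha)-\sum_{s>i}k_{is}=k_{ii}+\alpha$, using $\sum_{s\ge i}k_{is}=\nu_i$ for the original filling; symmetrically, the left filling has content $\mu+(\beta)$ and edge parts $m_{ii}+\beta$. With $\alpha$ and $\beta$ as above and $k_{11}\ge\cdots\ge k_{rr}$, $m_{11}\ge\cdots\ge m_{rr}$, all edge parts of both fillings are $\ge 0$, and the interior parts are $\ge 0$ by (L$\mathbb{R}$2). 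The one step that needs real attention is the preservation of genericity under a simultaneous scalar shift of $\mu$ and $\nu$ in the third paragraph; everything else is bookkeeping with (L$\mathbb{R}$1) and (L$\mathbb{R}$4), and amounts to doing on both sides at once the computation already present in the proof of the preceding Proposition.
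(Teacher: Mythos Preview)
Your proposal is correct and takes essentially the same approach as the paper, which treats this corollary as immediate from the preceding Proposition (the $ND(\alpha)$ shift adds $\alpha$ to the right edge parts) together with its left-filling analogue for $D(\beta)M$. You have spelled out more carefully the one point the paper leaves implicit---that the same $L$ remains $\mu'$-$\widehat{\nu'}$-generic because admissibility and Inequalities~\ref{mu-gap}, \ref{nu-gap} depend only on differences $|\mu_I|-|\mu_H|$, $|\widehat{\nu}_J|-|\widehat{\nu}_H|$ between index sets of equal length---and this is exactly the mechanism the paper's proof of the Proposition invokes on one side at a time.
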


In other words, multiplying $M$ and $N$ by appropriate scalars will
``shift'' the filling so that even the edge parts of the filling are
non-negative.  Pictorially, we imagine starting with a matrix pair
$(M,N)$ whose associated L${\mathbb R}$ diagram has the form:

\vspace{0.3in} \psset{unit=0.7cm} \hspace{2in}
\begin{pspicture}(11,4)
\psline[linewidth=3pt, linestyle=dashed](0,0)(0,4)
\psframe(0,3)(7,4) \rput{*0}(3.5,3.5){$\mu_1$} \psframe(0,2)(3,3)
\rput{*0}(1.5,2.5){$\mu_2$}
\psframe[fillstyle=solid,fillcolor=lightgray](-2,1.1)(0,1.5)
\rput{*0}(-1,1.3){$\mu_3$}
\psframe[fillstyle=solid,fillcolor=lightgray](-4,0.1)(0,0.5)
\rput{*0}(-2,0.3){$\mu_4$}
\psframe(3,2)(6,3)\psline{->}(0.2,1.7)(-2,1.7)
\psline{->}(1,1.7)(1.5,1.7) \rput{*0}(4,2.5){$k_{12}$}
\psframe(-2,1)(1.5,2) \rput{*0}(0.6,1.7){$k_{13}$}
\psframe(1.5,1)(4.5,2) \rput{*0}(2.4,1.5){$k_{23}$}
\psline{->}(2.8,1.5)(3, 1.5)(3,1.8)(4.5,1.8)
\psline{->}(2.1,1.5)(1.5, 1.5) \psframe(-4,0)(-2.4,1)
\rput{*0}(-3.2,0.75){$k_{14}$}
\psframe(-2.4,0)(1.2,1)\rput{*0}(-0.6,0.75){$k_{24}$}
\psline{->}(-1,0.7)(-2.4,0.7)\psline{->}(-0.2,0.7)(1.2,0.7)
\psframe(1.2,0)(3.8,1) \rput{*0}(2,0.5){$k_{34}$}
\psframe[fillstyle=solid,fillcolor=lightgray](6.2,3.1)(7,3.9)
\psline{->}(4.4,2.5)(4.6, 2.5)(4.6,2.8)(6,2.8)
\rput{*0}(6.6,3.5){$k_{11}$}
\psframe[fillstyle=solid,fillcolor=lightgray](4.75,2.1)(6,2.7)
\rput{*0}(5.4,2.4){$k_{22}$} \psline{->}(3.6,2.5)(3,2.5)
\psframe[fillstyle=solid,fillcolor=lightgray](3.3,1.1)(4.5,1.7)
\rput{*0}(3.9,1.4){$k_{33}$}
\psframe[fillstyle=solid,fillcolor=lightgray](2.6,0.1)(3.8,0.7)
\rput{*0}(3.2,0.4){$k_{44}$} \psline{->}(2.3,0.5)(2.5,
0.5)(2.5,0.8)(3.8,0.8) \psline{->}(1.6,0.5)(1.2, 0.5)
\end{pspicture}

\vspace{0.3in}

Multiplying $M$ by a scalar matrix $D(\beta) = diag(\beta, \beta,
\dots, \beta)$, for $\beta > 0$, will have the effect of moving the
origin to the {\em left}.  The above diagram came from our earlier
example in which $\mu = (\mu_1, \mu_2, \mu_3, \mu_4) = (7,3,-2,
-4)$.  Consequently, so that $\mu+ (\beta)$ will have all positive
values, we will set $\beta = 5$:

\vspace{0.3in} \psset{unit=0.7cm} \hspace{2in}
\begin{pspicture}(11,4)
\psline[linewidth=3pt, linestyle=dashed](-5,0)(-5,4)
\psline[linewidth=0.5pt, linestyle=dashed](0,0)(0,4)
\psframe(-5,3)(7,4) \rput{*0}(1,3.5){$\mu_1$} \psframe(-5,2)(3,3)
\rput{*0}(-1,2.5){$\mu_2$}
\psframe(-5,1)(-2,2) \rput{*0}(-3.5,1.5){$\mu_3$}
\rput{*0}(-4.5,0.5){$\mu_4$} \psframe(-5,0)(-4,1)
\psframe(3,2)(6,3)\psline{->}(0.2,1.5)(-2,1.5)
\psline{->}(1,1.5)(1.5,1.5) \rput{*0}(4,2.5){$k_{12}$}
\psframe(-2,1)(1.5,2) \rput{*0}(0.6,1.5){$k_{13}$}
\psframe(1.5,1)(4.5,2) \rput{*0}(2.4,1.5){$k_{23}$}
\psline{->}(2.8,1.5)(3, 1.5)(3,1.8)(4.5,1.8)
\psline{->}(2.1,1.5)(1.5, 1.5) \psframe(-4,0)(-2.4,1)
\rput{*0}(-3.2,0.5){$k_{14}$}
\psframe(-2.4,0)(1.2,1)\rput{*0}(-0.6,0.5){$k_{24}$}
\psline{->}(-1,0.5)(-2.4,0.5)\psline{->}(-0.2,0.5)(1.2,0.5)
\psframe(1.2,0)(3.8,1) \rput{*0}(2,0.5){$k_{34}$}
\psframe[fillstyle=solid,fillcolor=lightgray](6.2,3.1)(7,3.9)
\psline{->}(4.4,2.5)(4.6, 2.5)(4.6,2.8)(6,2.8)
\rput{*0}(6.6,3.5){$k_{11}$}
\psframe[fillstyle=solid,fillcolor=lightgray](4.75,2.1)(6,2.7)
\rput{*0}(5.4,2.4){$k_{22}$} \psline{->}(3.6,2.5)(3,2.5)
\psframe[fillstyle=solid,fillcolor=lightgray](3.3,1.1)(4.5,1.7)
\rput{*0}(3.9,1.4){$k_{33}$}
\psframe[fillstyle=solid,fillcolor=lightgray](2.6,0.1)(3.8,0.7)
\rput{*0}(3.2,0.4){$k_{44}$} \psline{->}(2.3,0.5)(2.5,
0.5)(2.5,0.8)(3.8,0.8) \psline{->}(1.6,0.5)(1.2, 0.5)
\psline{->}(-3.2,4.5)(-5,4.5) \psline{->}(0,4.5)(-1.8,4.5)
\rput{*0}(-2.5,4.5){$\beta =5$}
\end{pspicture}

\vspace{0.3in}

Note that, under this transformation, all values of $\mu_{i} -
\mu_{j}$ remain the same.  In order to remove the negative values of
$k_{ii}$ in the above, we must multiply $LD_{\widehat{\nu}}$ by an
appropriate scalar matrix $D(\alpha)$.  Here, we set $\alpha = 3.2$:

\vspace{0.3in} \psset{unit=0.7cm} \hspace{2in}
\begin{pspicture}(11,4)
\psline[linewidth=3pt, linestyle=dashed](-5,0)(-5,4)
\psline[linewidth=0.5pt, linestyle=dashed](6.2,3)(6.2,4)
\psline{->}(8.7,4.5)(9.1,4.5) \psline{->}(6.2,4.5)(6.8,4.5)
\rput{*0}(7.7,4.5){$\alpha =3.2$} \psframe(-5,3)(7,4)
\rput{*0}(1,3.5){$\mu_1$} \psframe(-5,2)(3,3)
\rput{*0}(-1,2.5){$\mu_2$}
\psframe(-5,1)(-2,2) \rput{*0}(-3.5,1.5){$\mu_3$}
\rput{*0}(-4.5,0.5){$\mu_4$} \psframe(-5,0)(-4,1) \psframe(3,2)(6,3)
\rput{*0}(4.5,2.5){$k_{12}$} \psframe(-2,1)(1.5,2)
\rput{*0}(-0.25,1.5){$k_{13}$} \psframe(1.5,1)(4.5,2)
\rput{*0}(3,1.5){$k_{23}$}
 \psframe(-4,0)(-2.4,1)
\rput{*0}(-3.2,0.5){$k_{14}$}
\psframe(-2.4,0)(1.2,1)\rput{*0}(-0.6,0.5){$k_{24}$}
\psframe(1.2,0)(3.8,1) \rput{*0}(2.5,0.5){$k_{34}$}
\psframe(7,3)(9.1,4) \rput{*0}(8.05,3.5){$k_{11}$}
\psframe(6,2)(8,3) \rput{*0}(7,2.5){$k_{22}$} \psframe(4.5,1)(6.5,2)
\rput{*0}(5.5,1.5){$k_{33}$} \psframe(3.8,0)(5.8,1)
\rput{*0}(4.8,0.5){$k_{44}$}
\end{pspicture}

\vspace{0.3in}
 Important here is the observation that these shifts preserve the
 interior parts of the filling, so that, in fact, any L${\mathbb
 R}$-filling is a scalar shift away from one with all non-negative
 parts.

 Before proceeding, we will need the idea of an {\em initial segment}
of an $i$-strip of a L${\mathbb R}$-filling. Let us suppose, given
some L${\mathbb R}$-filling $\{ k_{ij} \}$ of a matrix pair that for
some $i$, the interior {\em and also} the edge parts are
non-negative. Given such an $i$-strip ${\cal S}= \lambda /
\lambda'$, say, we say the skew shape ${\cal S}'$ is an {\em initial
segment} of ${\cal S}$ if there is a row $j$ of the shape such that
the parts of ${\cal S}$ and ${\cal S'}$ are identical in all rows
below $j$, that ${\cal S'}$ is empty above row $j$, and the part of
row $j$ that appears in ${\cal S}'$ has length less than or equal to
that appearing in ${\cal S}$ in row $j$.

 \begin{lem}[Ordering Lemma]
Let $D_{\mu}$ and $D_{\widehat{\nu}}$ be as above, and let $L$ be a
$\mu$-$\widehat{\nu}$-generic matrix.  Let $\{ k_{ij} \}$ be the
associated right L${\mathbb R}$ filling of $\lambda / \mu$ with
content $\nu$ (where $\lambda = inv(D_{\mu}LD_{\widehat{\nu}}$).  We
assume all $k_{ij} \geq 0$.  Then, for each $i$, $1 \leq i \leq r$,
if $ 0 < \alpha < \beta \leq \nu_{i}$, the horizontal strip
\[ inv(\mu L \nu_1 \dots , \nu_{i-1}, \alpha ) / inv(\mu L \nu_1 ,\dots , \nu_{i-1}) \] is an initial
segment of the horizontal strip
\[ inv(\mu L \nu_1 , \dots , \nu_{i-1}, \beta) / inv(\mu L \nu_{1} ,
 \dots , \nu_{i-1}). \]
 That is, each horizontal $i$-strip of an L${\mathbb R}$ filling
grows from the bottom row to the top, and from the left-most to the
right, as $\alpha$ grows from $0$ to $\nu_{i}$.  The same will be
true for the left filling $\{ m_{ij} \}$ of $(D_{\mu},
LD_{\widehat{\nu}})$ of $\lambda / \nu$ with content $\mu$.

\end{lem}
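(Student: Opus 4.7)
My plan is to track the invariant partition $\lambda^{(\alpha)} := inv(A(\alpha))$, where $A(\alpha) := D_\mu L D_{\widehat{\nu^*(\alpha)}}$ and $\nu^*(\alpha) := (\nu_1, \dots, \nu_{i-1}, \alpha, 0, \dots, 0)$, as $\alpha$ increases through $(0, \nu_i]$.  By Corollary~\ref{same-same}, the first $i-1$ strips of the L${\mathbb R}$-filling of $(D_\mu, L D_{\widehat{\nu^*(\alpha)}})$ remain constant on this interval, so $\lambda^{(i-1)} := inv(\mu L \nu_1 \cdots \nu_{i-1})$ is independent of $\alpha$ and only the $i$-strip $\lambda^{(\alpha)}/\lambda^{(i-1)}$ evolves.

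The key observation is that $A(\alpha)$ differs from $A(0)$ only by scaling column $r - i + 1$ by $t^\alpha$.  Using the identity $\sum_{j = r - s + 1}^r \lambda^{(\alpha)}_j = \min_{|I| = |J| = s} \| A(\alpha)_{IJ} \|$ (the valuation-ring analog of the classical elementary-divisor formula, established by Smith-form reduction) and splitting such minors by whether their column index set contains $r - i + 1$, one obtains
\[
S_s(\alpha) := \sum_{j = r - s + 1}^r \lambda^{(\alpha)}_j = \min(M_s,\, K_s + \alpha),
\]
where $M_s$ is the minimum order over $s \times s$ minors of $A(0)$ with $r - i + 1 \notin J$, and $K_s$ is the corresponding minimum over minors with $r - i + 1 \in J$.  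Hence each $S_s$ is a piecewise-linear non-decreasing function of $\alpha$ with slope in $\{0, 1\}$, and the total size satisfies $|\lambda^{(\alpha)}| - |\lambda^{(i-1)}| = \alpha$.

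Next I establish coordinate-wise monotonicity of each $\lambda^{(\alpha)}_j$ in $\alpha$, via a submodule-inclusion argument on the column $R$-modules of the $A(\alpha)$ together with Theorem~\ref{interleaving}.  Granted this, the set $G(\alpha) := \{s : S_s(\alpha) < M_s\}$ of indices where $S_s$ is still growing must be upward-closed in $\{1, \dots, r\}$: for if $s - 1 \in G(\alpha)$ but $s \notin G(\alpha)$, then $\lambda^{(\alpha)}_{r - s + 1} = S_s - S_{s-1}$ would have negative slope at $\alpha$, contradicting monotonicity.  Hence $G(\alpha) = \{s_0(\alpha), s_0(\alpha) + 1, \dots, r\}$ for some threshold $s_0(\alpha)$, and at each generic $\alpha$ there is exactly one row growing, namely row $r - s_0(\alpha) + 1$.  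Since $s \in G(\alpha)$ iff $\alpha < M_s - K_s$, an element can only leave $G$ as $\alpha$ grows; hence $s_0(\alpha)$ is non-decreasing, and the index of the growing row is non-increasing --- growth moves strictly from the bottom row $r$ upward toward row $i$.

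This is precisely the initial-segment property: as $\alpha$ increases from $0$ to $\nu_i$, the skew-strip $\lambda^{(\alpha)}/\lambda^{(i-1)}$ fills row $r$ up to its column-strictness ceiling $\lambda^{(i-1)}_{r-1}$, then row $r-1$, and so on, with the left-to-right growth within each row automatic from the fact that each row merely extends rightward.  The corresponding result for the left filling follows by the symmetric analysis of $(D_\mu L, D_{\widehat{\nu}})$ with rows and columns interchanged.  The main obstacle will be rigorously proving the coordinate-wise monotonicity of $\lambda^{(\alpha)}_j$ in $\alpha$ --- this does not follow immediately from the easy monotonicity of the partial sums $S_s$, and will require careful application of interlacing together with the $\mu$-$\widehat{\nu}$-genericity of $L$ to rule out unexpected cancellation among the relevant minors.
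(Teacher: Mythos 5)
Your strategy is genuinely different from the paper's and is viable, but as written it has one real gap --- the one you yourself flag at the end. Everything in your argument funnels through the claim that each coordinate $\lambda^{(\alpha)}_j$ is non-decreasing in $\alpha$, which (given your exact formula $S_s(\alpha)=\min(M_s,K_s+\alpha)$) is \emph{equivalent} to the inequality $M_{s-1}-K_{s-1}\leq M_s-K_s$ for all $s$. Without that, the upward-closedness of $G(\alpha)$, the existence of a single growing row, and the monotone movement of that row all collapse, so this is not a technicality but the crux. The good news is that it is true and closable with machinery already established earlier in the paper, and with no genericity or ``cancellation'' analysis at all: for $\alpha<\beta$ one has $A(\beta)=A(\alpha)\cdot E$ with $E=diag(1,\dots,1,t^{\beta-\alpha},1,\dots,1)$, and applying Theorem~\ref{kij-lr} to the full-rank pair $(A(\alpha),E)$ yields an L${\mathbb R}$-filling with $inv(A(\beta))_j=inv(A(\alpha))_j+k_{jj}+\sum_{s<j}k_{sj}$, where the interior parts are non-negative and the word condition forces $k_{jj}\geq k_{rr}=inv(E)_r=0$; hence $inv(A(\beta))_j\geq inv(A(\alpha))_j$ for every $j$. (Alternatively this is the Weyl-type consequence of the Carlson--de S\'a minimax cited for Theorem~\ref{interleaving}.) You should also say explicitly why the growing row always lies in rows $i$ through $r$: that comes from Corollary~\ref{same-same}, not from the minor bookkeeping.

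For comparison, the paper proves the lemma by a completely different, hands-on route: it passes to a $\mu$-generic upper-triangular representative, varies the relevant diagonal exponent, and explicitly row-reduces the perturbed matrix $P(\beta)$ to diagonal form, identifying critical parameter values $\beta_1>\beta_2>\cdots$ and columns $j_1<j_2<\cdots$ so that on each interval $(\beta_{\kappa+1},\beta_\kappa]$ exactly the diagonal entry in row $j_\kappa$ changes. That argument yields the location of the growing row constructively (it is used again in Corollary~\ref{don't-move} and in the left--right bijection theorem), whereas your minor-theoretic argument is shorter and more conceptual but only identifies the growing row implicitly through the thresholds $M_s-K_s$. Once you insert the monotonicity step above, your proof is complete and is a legitimate alternative to the paper's.
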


The Ordering Lemma, to be proved below, says something
 even more precise about the dynamics of L${\mathbb R}$-fillings
 when we alter specific values of $\nu_{i}$ appearing in
 $D_{\widehat{\nu}}$ (or values of $\mu_j$ in $D_{\mu}$).   Here, we will dispense with presenting the
 matrix calculations, and display our results in terms of the
 L${\mathbb R}$-diagrams generated by our matrix results.

The \LR\ diagram associated to the pair $(D_{\mu}, L
D_{\widehat{\nu}})$ above is just:

\vspace{0.3in} \psset{unit=0.7cm} \hspace{1in}
\begin{pspicture}(5,4) \psframe(0,0)(1,1) \psframe(0,1)(1,2)
\psframe(0,2)(1,3) \psframe(0,3)(1,4)

\psframe(1,0)(2,1) \psframe(1,1)(2,2)
\rput{*0}(1.5,0.5){$1$}\psframe(1,2)(2,3) \psframe(1,3)(2,4)

\psframe(2,0)(3,1) \psframe(2,1)(3,2)\rput{*0}(2.5,0.5){$2$}
\psframe(2,2)(3,3)\rput{*0}(2.5,1.5){$1$} \psframe(2,3)(3,4)

\psframe(3,0)(4,1) \psframe(3,1)(4,2)\rput{*0}(3.5,0.5){$3$}
\psframe(3,2)(4,3) \rput{*0}(3.5,1.5){$1$} \psframe(3,3)(4,4)

\psframe(4,0)(5,1) \psframe(4,1)(5,2)
\rput{*0}(4.5,0.5){$4$}\psframe(4,2)(5,3) \rput{*0}(4.5,
1.5){$2$}\psframe(4,3)(5,4)

\psframe(5,0)(6,1) \psframe(5,1)(6,2)
\rput{*0}(5.5,0.5){$4$}\psframe(5,2)(6,3)
\psframe(5,3)(6,4)\rput{*0}(5.5,1.5){$2$} \rput{*0}(5.5,2.5){$1$}

\psframe(6,1)(7,2) \psframe(6,2)(7,3) \psframe(6,3)(7,4)
\rput{*0}(6.5,1.5){$3$} \rput{*0}(6.5,2.5){$1$}

\psframe(7,1)(8,2) \psframe(7,2)(8,3) \psframe(7,3)(8,4)
\rput{*0}(7.5,1.5){$3$} \rput{*0}(7.5,2.5){$2$}

\psframe(8,2)(9,3) \psframe(8,3)(9,4) \rput{*0}(8.5,2.5){$2$}

\psframe(9,2)(10,3) \psframe(9,3)(10,4)  \rput{*0}(9.5,2.5){$2$}
\rput{*0}(9.5,3.5){$1$}

 \psframe(10,3)(11,4) \rput{*0}(10.5,3.5){$1$}

  \psframe(11,3)(12,4)\rput{*0}(11.5,3.5){$1$}

   \psframe(12,3)(13,4)\rput{*0}(12.5,3.5){$1$}

    \psframe(13,3)(14,4)\rput{*0}(13.5,3.5){$1$}

     \psframe(14,3)(15,4)\rput{*0}(14.5,3.5){$1$}
\end{pspicture}

\vspace{0.2in} which, as shown above, we now represent with
L${\mathbb R}$-diagram:

\vspace{0.3in} \psset{unit=0.7cm} \hspace{1in}
\begin{pspicture}(5,4)\psframe(0,0)(1,1)
\psframe(1,0)(2,1)\rput{*0}(1.5,0.5){$k_{14}$}
\rput{*0}(2.5,0.5){$k_{24}$} \rput{*0}(3.5,0.5){$k_{34}$}
\psframe(2,0)(3,1) \psframe(3,0)(4,1) \psframe(2,1)(4,2)
\rput{*0}(3,1.5){$k_{13}$} \rput{*0}(0.5,0.5){$\mu_4$}
 \psframe(0,1)(2,2) \psframe(0,2)(5,3)
\rput{*0}(1,1.5){$\mu_{3}$} \rput{*0}(2.5,2.5){$\mu_2$}
\psframe(0,3)(9,4) \rput{*0}(4.5,3.5){$\mu_1$} \psframe(9,3)(15,4)
\rput{*0}(12,3.5){$k_{11}$} \psframe(5,2)(7,3)
\rput{*0}(6,2.5){$k_{12}$} \psframe(7,2)(10,3)
\rput{*0}(8.5,2.5){$k_{22}$} \psframe(4,0)(6,1)
\rput{*0}(5,0.5){$k_{44}$} \psframe(4,1)(6,2)
\rput{*0}(5,1.5){$k_{23}$} \psframe(6,1)(8,2)
\rput{*0}(7,1.5){$k_{33}$}
\end{pspicture}

\vspace{0.2in}

By Corollary~\ref{same-same}, we know that the parts $\{ k_{ij} \}$
of the right filling of the pair $(D_{\mu} , LD_{\widehat{\nu}}) =
(\mu L, 11, 6, 3,2)$ are the same, for $1 \leq i \leq \kappa$, as
that of $(\mu L \nu_1, \dots, \nu_{\kappa}, 0 \dots 0).$  Thus, the
diagram, using $L$ as above, for $(\mu L \nu_1 \nu_2)= (\mu L,  11 ,
6, 0 , 0)$ would be:

\vspace{0.3in} \psset{unit=0.7cm} \hspace{1in}
\begin{pspicture}(5,4)\psframe(0,0)(1,1)
\psframe(1,0)(2,1)\rput{*0}(1.5,0.5){$k_{14}$}
\rput{*0}(2.5,0.5){$k_{24}$} \psframe(2,0)(3,1)
 \psframe(2,1)(4,2)
\rput{*0}(3,1.5){$k_{13}$} \rput{*0}(0.5,0.5){$\mu_4$}
 \psframe(0,1)(2,2) \psframe(0,2)(5,3)
\rput{*0}(1,1.5){$\mu_{3}$} \rput{*0}(2.5,2.5){$\mu_2$}
\psframe(0,3)(9,4) \rput{*0}(4.5,3.5){$\mu_1$} \psframe(9,3)(15,4)
\rput{*0}(12,3.5){$k_{11}$} \psframe(5,2)(7,3)
\rput{*0}(6,2.5){$k_{12}$} \psframe(7,2)(10,3)
\rput{*0}(8.5,2.5){$k_{22}$}
 \psframe(4,1)(6,2)
\rput{*0}(5,1.5){$k_{23}$}
\end{pspicture}
\vspace{0.2in}

Let us now decrease $\nu_2 = 6$.  By the Ordering Lemma we obtain:

\noindent $(\mu L , 11, \underline{5}, 0 , 0)$:

 \psset{unit=0.7cm} \hspace{1in}
\begin{pspicture}(5,4)\psframe(0,0)(1,1)
\psframe(1,0)(2,1)\rput{*0}(1.5,0.5){$k_{14}$}
\rput{*0}(2.5,0.5){$k_{24}$} \psframe(2,0)(3,1)
 \psframe(2,1)(4,2)
\rput{*0}(3,1.5){$k_{13}$} \rput{*0}(0.5,0.5){$\mu_4$}
 \psframe(0,1)(2,2) \psframe(0,2)(5,3)
\rput{*0}(1,1.5){$\mu_{3}$} \rput{*0}(2.5,2.5){$\mu_2$}
\psframe(0,3)(9,4) \rput{*0}(4.5,3.5){$\mu_1$} \psframe(9,3)(15,4)
\rput{*0}(12,3.5){$k_{11}$} \psframe(5,2)(7,3)
\rput{*0}(6,2.5){$k_{12}$}
\psframe(7,2)(9,3)\psframe[linestyle=dashed](9,2)(10,3)
\rput{*0}(8,2.5){$k_{22}$}
 \psframe(4,1)(6,2)
\rput{*0}(5,1.5){$k_{23}$}
\end{pspicture}
\vspace{0.2in}

\noindent $(\mu L , 11, \underline{4.5}, 0 , 0)$:

 \psset{unit=0.7cm} \hspace{1in}
\begin{pspicture}(5,4)\psframe(0,0)(1,1)
\psframe(1,0)(2,1)\rput{*0}(1.5,0.5){$k_{14}$}
\rput{*0}(2.5,0.5){$k_{24}$} \psframe(2,0)(3,1) \psframe(2,1)(4,2)
\rput{*0}(3,1.5){$k_{13}$} \rput{*0}(0.5,0.5){$\mu_4$}
 \psframe(0,1)(2,2) \psframe(0,2)(5,3)
\rput{*0}(1,1.5){$\mu_{3}$} \rput{*0}(2.5,2.5){$\mu_2$}
\psframe(0,3)(9,4) \rput{*0}(4.5,3.5){$\mu_1$} \psframe(9,3)(15,4)
\rput{*0}(12,3.5){$k_{11}$} \psframe(5,2)(7,3)
\rput{*0}(6,2.5){$k_{12}$}
\psframe(7,2)(8.5,3)\psframe[linestyle=dashed](8.5,2)(10,3)
\rput{*0}(7.75,2.5){$k_{22}$}
 \psframe(4,1)(6,2)
\rput{*0}(5,1.5){$k_{23}$}
\end{pspicture}
\vspace{0.2in}

\noindent $(\mu L , 11, \underline{3.2}, 0 , 0)$:

 \psset{unit=0.7cm} \hspace{1in}
\begin{pspicture}(5,4)\psframe(0,0)(1,1)
\psframe(1,0)(2,1)\rput{*0}(1.5,0.5){$k_{14}$}
\rput{*0}(2.5,0.5){$k_{24}$} \psframe(2,0)(3,1) \psframe(2,1)(4,2)
\rput{*0}(3,1.5){$k_{13}$} \rput{*0}(0.5,0.5){$\mu_4$}
 \psframe(0,1)(2,2) \psframe(0,2)(5,3)
\rput{*0}(1,1.5){$\mu_{3}$} \rput{*0}(2.5,2.5){$\mu_2$}
\psframe(0,3)(9,4) \rput{*0}(4.5,3.5){$\mu_1$} \psframe(9,3)(15,4)
\rput{*0}(12,3.5){$k_{11}$} \psframe(5,2)(7,3)
\rput{*0}(6,2.5){$k_{12}$}
\psframe(7,2)(7.2,3)\psframe[linestyle=dashed](7.2,2)(10,3)
\rput{*0}(8.8,1.3){$k_{22}=
0.2$}\pscurve{->}(7.7,1.25)(7.5,1.4)(7.1,1.95)
 \psframe(4,1)(6,2)
\rput{*0}(5,1.5){$k_{23}$}
\end{pspicture}
\vspace{0.2in}

\noindent $(\mu L , 11, \underline{1.7}, 0 , 0)$:

 \psset{unit=0.7cm} \hspace{1in}
\begin{pspicture}(5,4)\psframe(0,0)(1,1)
\psframe(1,0)(2,1)\rput{*0}(1.5,0.5){$k_{14}$}
\rput{*0}(2.5,0.5){$k_{24}$} \psframe(2,0)(3,1) \psframe(2,1)(4,2)
\rput{*0}(3,1.5){$k_{13}$} \rput{*0}(0.5,0.5){$\mu_4$}
 \psframe(0,1)(2,2) \psframe(0,2)(5,3)
\rput{*0}(1,1.5){$\mu_{3}$} \rput{*0}(2.5,2.5){$\mu_2$}
\psframe(0,3)(9,4) \rput{*0}(4.5,3.5){$\mu_1$} \psframe(9,3)(15,4)
\rput{*0}(12,3.5){$k_{11}$} \psframe(5,2)(7,3)
\rput{*0}(6,2.5){$k_{12}$} \psframe[linestyle=dashed](7,2)(10,3)
 \psframe(4,1)(4.7,2) \psframe[linestyle=dashed](4.7,1)(6,2)
\rput{*0}(4.35,1.5){$k_{23}$}
\end{pspicture}
\vspace{0.2in}

The stability of the filling for parts $k_{ij}$ for $i < \sigma$ as
we decrease the value of $\nu_{\sigma}$ not only relates a
continuously varying family of fillings to each other, but will be a
key component in our proof constructing a bijection between left and
right fillings.  For now, let us continue with the proof of the
Ordering Lemma:

\begin{proof}
We shall prove this result for left fillings, that is, for fillings
of $\lambda / \nu$ with content $\mu$ of the pair $(D_{\mu},
LD_{\widehat{\nu}})$.  The result for right fillings will follow
analogously.  We may assume $\mu = (\mu_1, \mu_2, \dots, \mu_{\ell},
\mu_{\ell + 1}, 0 , \dots, 0)$, and we shall see that as we decrease
the value of $\mu_{\ell+1}$, keeping $L$ and $D_{\widehat{\nu}}$
above fixed, that the $(\ell+1)$-strip of the left filling deforms
according to the statement of the Lemma.

Suppose that the left filling of the pair $(D_{\mu},
LD_{\widehat{\nu}})$ has parts $\{ m_{ij}\}$.   Let $\mu(\ell+1;
\beta) = (\mu_1, \mu_2, \dots , \mu_{\ell}, \beta, 0 , \dots, 0)$,
where $\beta$ satisfies $0 < \beta \leq \mu_{\ell + 1}$.  Let $\{
m_{ij}^* \}$ be the left filling of the pair $(D_{\mu(\ell+1;
\beta)},LD_{\widehat{\nu}})$. By Corollary~\ref{same-same} we know
that $m_{ij} = m_{ij}^*$ for all $i$, $1 \leq i \leq \ell$, and all
$j$. Thus, to track the changes to the $(\ell +1)$-strip, it will be
sufficient to compute the invariant partition $inv(D_{\mu(\ell+1;
\beta)}LD_{\widehat{\nu}})$ for different choices of $\beta$, since
the rest of the left filling remains fixed during the deformation.

Let us begin by noting that
\[ inv( D_{\mu}LD_{\widehat{\nu}}) = inv(D_{\mu}N),
\] where we may assume $N$ is a $\mu$-generic matrix in the same orbit as
the pair $(D_{\mu}, LD_{\widehat{\nu}})$.  In particular, $N$ is
upper triangular, and by Lemma~\ref{row-ineq}, applied to the
$\mu$-generic matrix $N$, the orders of entries in $N$ increase as
we proceed {\em down} any column, while the orders in the product
$D_{\mu}N$ increase as we proceed {\em up} any column. Further,
orders of entries increase as we proceed to the {\em left} in any
row. The upshot of this is that the product:
\begin{align*} D_{\mu}N
& =
\begin{bmatrix} t^{\mu_1} a_{11} & t^{\mu_1} a_{12} &
\dots & \dots&\dots& \dots& \dots & t^{\mu_1}a_{1r} \\
0 & t^{\mu_2}a_{22} & t^{\mu_2}a_{23} & &  & & & \vdots \\
\vdots & 0& \ddots & \ddots & & & & \vdots \\
\vdots & &\ddots & t^{\mu_{\ell}} a_{\ell, \ell} &
t^{\mu_{\ell}}a_{\ell, \ell + 1} & \dots & \dots &
t^{\mu_{\ell}}a_{\ell, r} \\
\vdots & & & \ddots & t^{\mu_{\ell + 1}}a_{\ell + 1, \ell + 1} &
t^{\mu_{\ell + 1}}a_{\ell + 1, \ell + 2} & \dots & t^{\mu_{\ell +
1}}a_{\ell + 1, r}
\\
\vdots & &&& \ddots & a_{\ell + 2, \ell + 2} & \dots & a_{\ell + 2,
r} \\
\vdots & & & & & \ddots & \ddots & \vdots \\
\vdots & & & & & \ddots & a_{r-1, r-1} & a_{r-1,r}\\
 0 & \dots & \dots    &\dots  &\dots  &\dots  &0 & a_{rr} \end{bmatrix}
 \begin{array}{c}    \Uparrow \\ \hbox{Orders} \\
 \hbox{Increase} \\
 \Uparrow \end{array} \\ & \hspace{2.4in} \hbox{$\Leftarrow$ Orders Increase
 $\Leftarrow$} \end{align*}

 is row-equivalent to the diagonal matrix:
\[ D_{\mu}N = \begin{bmatrix} t^{\mu_1} a_{11} & 0 &
\dots & \dots&\dots& \dots& \dots & \dots & 0 \\
0 & t^{\mu_2}a_{22} & 0 & &  & & && \vdots \\
\vdots & 0& \ddots & \ddots & & & && \vdots \\
\vdots & &\ddots & t^{\mu_{\ell}} a_{\ell, \ell} & 0 & \dots & \dots
& \dots & 0 \\
\vdots & & & \ddots & t^{\mu_{\ell + 1}}a_{\ell + 1, \ell + 1} & 0&
\dots & \dots & 0
\\
\vdots & &&& \ddots & a_{\ell + 2, \ell + 2} & 0 & \dots &0 \\
\vdots & & & & & \ddots & \ddots &\ddots & \vdots \\
\vdots & & & & & \ddots & &a_{r-1, r-1} &0\\
 0 & \dots & \dots    &\dots  &\dots  &\dots & &0 & a_{rr} \end{bmatrix}. \]

It is a consequence of Corollary~\ref{row-inv} that $\| a_{ii} \|
+\mu_{i}= \lambda_{i}$, so orders of entries in the above decrease
as we proceed down the diagonal.

Let us calculate, then, the invariant partition of the product
$D_{\mu(\ell+1; \beta)}N$:
\[ D_{\mu(\ell+1; \beta)}N = \begin{bmatrix} t^{\mu_1} a_{11} &
t^{\mu_1} a_{12} &
\dots & \dots&\dots& \dots& \dots & \dots & t^{\mu_1}a_{1r} \\
0 & t^{\mu_2}a_{22} & t^{\mu_2}a_{23} & &  & & & &\vdots \\
\vdots & 0& \ddots & \ddots & & & & & \vdots \\
\vdots & &\ddots & t^{\mu_{\ell}} a_{\ell, \ell} &
t^{\mu_{\ell}}a_{\ell, \ell + 1} & \dots & \dots & \dots &
t^{\mu_{\ell}}a_{\ell, r} \\
\vdots & & & \ddots &  \boldsymbol{t^{\boldsymbol\beta}}a_{\ell + 1,
\ell + 1} &  \boldsymbol{t^{\boldsymbol\beta}}a_{\ell + 1, \ell + 2}
& \dots &  \dots & \boldsymbol{t^{\boldsymbol\beta}}a_{\ell + 1, r}
\\
\vdots & &&& \ddots & a_{\ell + 2, \ell + 2} & a_{\ell + 2, \ell +
3} & \dots & a_{\ell + 2,
r} \\
\vdots & & & & & \ddots & \ddots &  & \vdots \\
\vdots & & & & & \ddots && a_{r-1, r-1} & a_{r-1,r}\\
 0 & \dots & \dots    &\dots  &\dots  &\dots  &&0 & a_{rr} \end{bmatrix}. \]

Since $D_{\mu(\ell+1; \beta)}N$ differs from $D_{\mu}N$ only in row
$\ell + 1$, and since $\beta \leq \mu_{\ell + 1}$, we may still
perform most of the row operations to $D_{\mu(\ell+1; \beta)}N$ that
we used to simplify $D_{\mu}N$.  We may conclude $D_{\mu(\ell+1;
\beta)}N$ is row-equivalent to the matrix $P(\beta)$, where:

\[ P(\beta) = \begin{bmatrix} t^{\mu_1} a_{11} &
0 &
\dots & \dots&\dots& \dots& \dots & \dots &0 \\
0 & t^{\mu_2}a_{22} & 0 & &  & & & &\vdots \\
\vdots & 0& \ddots & \ddots & & & & & \vdots \\
\vdots & &\ddots & t^{\mu_{\ell}} a_{\ell, \ell} & 0 & 0 & \dots &
\dots &
0 \\
\vdots & & & \ddots &  \boldsymbol{t^{\boldsymbol\beta}}a_{\ell + 1,
\ell + 1} &  \boldsymbol{t^{\boldsymbol\beta}}a_{\ell + 1, \ell + 2}
& \dots &  \dots & \boldsymbol{t^{\boldsymbol\beta}}a_{\ell + 1, r}
\\
\vdots & &&& \ddots & a_{\ell + 2, \ell + 2} & 0 & \dots & 0 \\
\vdots & & & & & \ddots & \ddots & \ddots & \vdots \\
\vdots & & & & & \ddots && a_{r-1, r-1} & 0\\
 0 & \dots & \dots    &\dots  &\dots  &\dots  &&0 & a_{rr} \end{bmatrix}. \]
Consequently, $inv(D_{\mu(\ell+1; \beta)}N) = inv(P(\beta))$.  The
proof of the lemma is thus reduced to calculating the invariant
partitions of the matrices $P(\beta)$ as $\beta$ decreases from
$\mu_{\ell +1}$ to $0$. That is, we must prove there is an
increasing sequence of row indices $j_0 < j_1 < \cdots $ such that
as $\beta$ decreases from $\mu_{\ell + 1}$ to $0$, the shape of
$inv(P(\beta))$ decreases first only in row $j_0$, and then only in
row $j_1$, etc., until we reach $inv(P(0))=inv(\mu_{\ell}, \dots ,
\mu_1 N)$.

To accomplish this, let us first set $\beta_{0} = \mu_{\ell + 1}$
and $j_{0} = \ell + 1$. Then, let us define, iteratively, for $i =
1, 2, \dots$
\[\beta_i = \max_{j > j_{i-1}}\{ \beta: \beta = \| a_{j, j} \| - \|
a_{\ell + 1, j} \|\},
\] and
\[ j_i = \max_{j > j_{i-1}} \{ j :  \beta_i = \|
a_{j,j} \| -\| a_{\ell + 1,j} \| \}. \]

The meaning of the above definitions is the following.  When $\beta
= \mu_{\ell +1}$, the orders of the entries $t^{\beta}a_{\ell + 1,
j}$ are all greater than or equal to the orders of the diagonal
entries $a_{j,j}$ lying below them on the diagonal of $P(\beta)$, by
Lemma~\ref{row-ineq}, since $N$ is $\mu$-generic. If we decrease
$\beta$, so long as the off-diagonal entries in row $\ell + 1$ have
orders greater than the diagonal entries below them, we may clear
these entries in row $\ell +1$ using the diagonal entries below
them, and we see the invariant partition of $P(\beta)$ only changes
in row $\ell + 1$. As we further decrease $\beta$, we will set
$\beta_1$ to be the first value at which two entries in the same
column, $t^{\beta_1}a_{\ell+1, j}$ and $a_{j,j}$,have the same
order. We then set $j_1$ to be the right-most column at which this
occurs when $\beta = \beta_1$. This means, when $\beta = \beta_1$,
in all columns to the right of $j_1$ the order of the diagonal entry
in that column is {\em strictly less than} the order of the entry
lying above it in row $\ell + 1$.  We then imagine decreasing
$\beta$ smaller than $\beta_1$, and set $\beta_2$ as the next value
at which, in some column to the right of $j_1$, we have $\beta_2+ \|
a_{\ell + 1, j} \| = \|a_{j,j} \|$, and set $j_2$ as the right-most
such column. We then continue until $\beta$ has reached 0 or we have
run out of columns.

Since, by hypothesis, $\mu_{i} = 0$ for $i \geq \ell + 1$, by
Lemma~\ref{row-ineq} (applied to the $\mu$-generic matrix $N$) we
have $\| a_{j,j} \| = \| a_{\ell + 1 + \kappa,j} \|$ for all $\kappa
\geq 1$ and $1 \leq j \leq \ell + \kappa$. Thus
\[ \| a_{j,j} \| - \| a_{\ell + 1, j } \| \leq \| a_{\ell + 2, j} \|
- \| a_{\ell + 1, j} \| \leq \mu_{\ell + 1} = \beta_{0}. \]

Thus, it is possible that $\beta_{1} = \beta_{0}$, but from then on,
as the column indices move to the right, we must have
\[ \beta_{i+1} < \beta_i, \quad i \geq 1. \]

Note that, in particular, for any integer $\kappa
>0$ we have $j_{i} < j_{i + \kappa}$, but
\[ \beta_i + \|a_{\ell + 1}, j_{i+ \kappa} \| >
\| a_{j_{i+ \kappa},j_{i+ \kappa}} \|, \] and
\[ \beta_{i + \kappa} + \|a_{\ell + 1}, j_{i} \| <
\| a_{j_{i},j_{i}} \|. \] In other words, as we decrease $\beta$,
when we first reach $\beta_i$, the gap between $\beta_i + \|a_{\ell
+ 1}, j_{i} \|$ and $\| a_{j_{i},j_{i}} \|$ has decreased to $0$, by
the definitions of $j_i$ and $\beta_i$, but  $\beta_i + \|a_{\ell +
1}, j_{i+ \kappa} \|$ is still {\em greater} than $\| a_{j_{i+
\kappa},j_{i+ \kappa}} \|$.  On the other hand, once $\beta$ has
decreased past $\beta_i$ and has reached $\beta_{i + \kappa}$, then
$\beta_i + \|a_{\ell + 1}, j_{i} \|$ has become less than $\|
a_{j_{i},j_{i}} \|$.

{\bf Claim:}  Let us set $j_{0} = \ell + 1$ and $\beta_0 = \mu_{\ell
+ 1}$.  Then for all $\kappa \geq 0$, if we decrease $\beta$ on the
interval $\beta_{\kappa+1} < \beta \leq \beta_{\kappa}$, the
$\mu_{\ell + 1}$ strip decreases {\em only} in row $j_{\kappa}$.

Since $\ell + 1 = j_0 \geq j_1 > j_2 > \dots$, proving the claim
will complete the proof of the lemma.

{\bf Proof of the Claim:} As shown above, for $\beta$ between
$\mu_{\ell + 1}$ and $\beta_1$, the $\mu_{\ell +1}$-strip only
decreases in row $\ell + 1$.  Let us fix an index $\kappa$, and a
choice of $\beta$ such that $\beta_{\kappa+1} < \beta \leq
\beta_{\kappa}$.  We shall calculate the invariant partition of
$P(\beta)$ in this case. To do so we will reduce the matrix
$P(\beta)$, inductively, on the column indices $j_1, j_2, \dots,
j_{\kappa}$, and then perform a simple operation on the remaining
columns, so that the final result is a matrix in diagonal form. We
shall suppose that, for some $i$, where $j_0 \leq j_i \leq
j_{\kappa}$ that $P(\beta)$ is equivalent to an upper-triangular
matrix $P(\beta)^{(i)}$ such that:

\begin{enumerate}
\item  In $P(\beta)^{(i)}$, above row $j_{i}$, the matrix is diagonal,
where each entry above row $j_{i}$ is either: \begin{enumerate}
\item The {\em same} as in $P(\beta)$, if it occurs in some row $s$, where
$s \neq j_{p}$ for any $p < i$, or \item where the
$(j_{p},j_{p})$-entry, for $j_{p} < j_{i}$, has order \[ \| a_{\ell
+ 1, j_{p}} \| + \beta_{j_{(p+1)}}. \] \end{enumerate} \item In rows
below $j_{i}$, the matrix is the same as $P(\beta)$. \item Lastly,
the $(j_{i}, s)$ entry, for $ s \geq j_{i}$, is $t^{\beta}a_{\ell +
1, s}$.  In particular, in rows $j_{i}$ through $j_{i+1}$, and
columns $j_{i}$ through $j_{\kappa}$, the matrix $P(\beta)^{(i)}$
has the form:

\[ \begin{matrix} t^{\beta}a_{\ell +1, j_{i}} & t^{\beta}
a_{\ell + 1, j_{i}+1} & t^{\beta} a_{\ell + 1, j_{i}+2}& \dots &
t^{\beta} a_{\ell + 1, (j_{(i+1)}-1)} & t^{\beta}a_{\ell + 1,
j_{(i+1)}} &t^{\beta}a_{\ell + 1, j_{(i+1)+1}}   & \dots &
t^{\beta}a_{\ell + 1, j_{\kappa}}\\
0 & a_{(j_{i}+1), (j_{i}+1)} & 0 & \dots & 0 & 0 & 0 &\dots & 0 \\
\vdots  & \ddots & a_{(j_{i}+2), (j_{i}+2)} & \ddots & \vdots &
\vdots  & && \vdots  \\
\vdots  && \ddots & \ddots & 0 & \vdots& & &\vdots \\
 \vdots & & &\ddots  &a_{(j_{(i+1)}-1, j_{(i+1)}-1)} & 0  & \dots &
  \dots &0\\
0 & \dots & \dots  & \dots &0& a_{(j_{(i+1)},j_{(i+1)})} & 0 & \dots
& 0
\end{matrix}
\]

\end{enumerate}

In the above, the top row entries are indeed correctly labeled
$t^{\beta}a_{\ell + 1, j}$, though we are assuming, for purposes of
induction, that they are located in row $j_{i}$ of the matrix
$P(\beta)^{(i)}$. In particular, $P(\beta)^{(0)} = P(\beta)$.  Our
goal is to reduce this matrix to produce the form
$P(\beta)^{(i+1)}$.

  Note that, by hypothesis on each $\beta_{p}$, that
\[ \| a_{j_{p}, j_p} \|  = \| a_{\ell + 1, j_p} \| + \beta_p, \]
but for all $p$ such that $1 \leq p < \kappa$ we have
\[ \| a_{j_{p}, j_p} \|  = \| a_{\ell + 1, j_p} \| + \beta_p >
\| a_{\ell + 1, j_p} \| + \beta_{\kappa} \geq \| a_{\ell + 1, j_p}
\| + \beta. \]

Since the orders of entries $a_{ij}$ in the $\mu$-generic matrix $N$
increase as we proceed to the left in any row, $P(\beta)^{(i)}$ is
equivalent to one whose entries in columns $j_{i}$ through
$j_{\kappa}$ look like:

\[ \begin{matrix} 0 &0 & \dots & \dots &
0 & t^{\beta}a_{\ell + 1, j_{(i+1)}} &  t^{\beta}a_{\ell + 1,
j_{(i+1)+1}}  & \dots &
t^{\beta}a_{\ell + 1, j_{\kappa}}\\
0 & a_{(j_{i}+1), (j_{i}+1)} & 0 & \dots & 0 & 0 &0 &\dots & 0 \\
\vdots  & \ddots & a_{(j_{i}+2), (j_{i}+2)} & \ddots & \vdots &
\vdots  & && \vdots  \\
\vdots  && \ddots & \ddots & 0 & \vdots& & &\vdots \\
 \vdots & & &\ddots  &a_{(j_{(i+1)}-1, j_{(i+1)}-1)} & 0  & \dots &
  \dots &0\\
q_{j_{(i+1)},j_{i}} & q_{j_{(i+1)},j_{i}+1} & \dots  & \dots
&q_{j_{(i+1)},j_{(i+1)}-1}& 0 & q_{j_{(i+1)},j_{(i+1)}+1} & \dots &
q_{j_{(i+1)},j_{\kappa}}
\end{matrix}
\]
where this is obtained by first adding a multiple of row $j_{i}$
(whose entries were of the form: $t^{\beta}a_{\ell + 1, s}$ to row
$j_{(i+1)}$ to put a $0$ in the $(j_{(i+1)}, j_{(i+1)})$ entry, and
then adding multiples of column $j_{(i+1)}$ to the columns to the
left to put zeros in row $j_{i}$.

Now, for each column $p$, for $j_{i}+1 \leq p \leq (j_{(i+1)} -1)$,
we must have (by the definition of $\beta_{(i+1)}$) either:
\[ \| a_{p, p}  \| \leq \| t^{\beta} a_{\ell + 1, p}
\|, \] in which case, $q_{j_{(i+1)}, p}$, which is a multiple of
$t^{\beta} a_{\ell + 1, p}$,  will have order greater than that of
$a_{p, p}$.  Thus in this case, we may add a multiple of row $p$
(whose only non-zero entry is in column $p$) to row $j_{(i+1)}$ to
cancel the entry $q_{j_{(i+1)}, p}$.  Otherwise, since by
construction there are no columns $j_{s}$ between $j_i$ and
$j_{(i+1)}$, and since $\beta_{\kappa +1} < \beta \leq
\beta_{\kappa} < \beta_{(i+1)}$, the only way an entry in row $\ell
+ 1$ between columns $j_{i}$ and $j_{(i+1)}$ can have its order less
than the diagonal entry below it is if it reaches that value {\em
simultaneously} with column $j_{(i+1)}$ (and $j_{(i+1)}$ is the {\em
right-most} column at which the values equaled).  That is, we must
have had
\[ \|a_{p,p} \| - \| t^{\beta_{(i+1)} }a_{\ell + 1, p} \| =
\|a_{j_{(i+1)},j_{(i+1)}} \| - \| t^{\beta_{(i+1)} }a_{\ell + 1,
j_{(i+1)}} \|. \] But then
\[ \| a_{p,p} \| - \| t^{\beta}a_{\ell + 1, p} \| = \|
a_{j_{(i+1)},j_{(i+1)}} \| - \| t^{\beta} a_{\ell +1, j_{(i+1)}} \|.
\]
When clearing the entry $a_{j_{(i+1)},j_{(i+1)}}$, we multiplied row
$\ell +1$ by a multiple of order
\[ \|a_{j_{(i+1)},j_{(i+1)}} \| - \| a_{\ell + 1, j_{(i+1)}} \|. \]
Consequently, the entry $q_{j_{(i+1)},p}$, which is the image of the
operation applied to column $p$, must have order: \begin{align*}
 \|q_{j_{(i+1)},p} \| & = \| t^{\beta}a_{\ell + 1, p} \| + \|
a_{j_{(i+1)},j_{(i+1)}} \| - \| t^{\beta} a_{\ell + 1, j_{(i+1)}} \|
\\
& = \| t^{\beta}a_{\ell + 1, p} \| + \| a_{p,p} \| - \|
t^{\beta}a_{\ell + 1, p} \|  \\
 & =\| a_{p,p} \|, \end{align*}
 so that again we may clear the entry $q_{j_{(i+1)},p}$ using the
diagonal entry $a_{pp}$ in row $p$.

In columns $p > j_{(i+1)}$, by the definition of $\beta_{(i+1)}$, we
again have either
\[ \| a_{p,p} \| \leq t^{\beta} a_{\ell + 1, p}, \]
in which case we may clear entries $q_{j_{(i+1)}, p}$ below this
term.

If, however,
\[ \| a_{pp} \| > \| t^{\beta} a_{\ell +1, p} \|, \]
where column $p$ is to the {\em right} of column $j_{(i+1)}$, then
we must have, by the definition of $\beta_{(i+1)}$,
\[ \| a_{j_{(i+1)},j_{(i+1)}} \| - \| t^{\beta} a_{\ell + 1,
j_{(i+1)}} \| > \| a_{p,p} \| - \| t^{\beta} a_{\ell + 1, p} \| , \]
that is,
\[ \| a_{j_{(i+1)},j_{(i+1)}} \|  + \| a_{\ell + 1, p} \|- \| a_{\ell + 1,
j_{(i+1)}} \| > \| a_{p,p} \| .\]

 But then, since \begin{align*} \| q_{j_{(i+1)}, p} \| & =
\|a_{j_{(i+1)},j_{(i+1)}} | - \beta - \| a_{\ell + 1, j_{(i+1)}} \|
+
\beta + \| a_{\ell + 1, p} \| \\
& =\|a_{j_{(i+1)},j_{(i+1)}} |  - \| a_{\ell + 1, j_{(i+1)}} \| +
 \| a_{\ell + 1, p} \| \\
 & > \| a_{p,p} \|,
\end{align*}
we may, again, add a multiple of row $p$ to row $j_{(i+1)}$ to clear
the entry in column $p$ of that row.

In column $j_{i}$, we see \begin{align*} \| q_{ j_{(i+1)},j_{1}} \|
& = \beta + \| a_{\ell + 1, j_{i}} \| + \| a_{j_{(i+1)}. j_{(i+1)}}
\| - \beta - \| a_{\ell + 1, j_{(i+1)}} \| \\ & =  \| a_{\ell + 1,
j_{i}} \| + \| a_{j_{(i+1)}. j_{(i+1)}} \| - \| a_{\ell + 1,
j_{(i+1)}} \| \\  & = \| a_{\ell + 1, j_{i}} \| + \beta_{(i+1)}.
\end{align*} which is independent of $\beta$. After switching rows
$j_{i}$ and $j_{(i+1)}$ we see we have constructed a matrix,
equivalent to $P(\beta)^{(i)}$, which satisfies the conditions
(1),(2), and (3) above, so we may re-name it $P(\beta)^{(i+1)}$.

We repeat this reduction in all columns until we reach
$P(\beta)^{(\kappa)}$.  Since $\beta_{\kappa + 1} < \beta \leq
\beta_{\kappa}$, the entries $a_{\ell + 1, p}$ for $p
> j_{\kappa}$ must have order {\em greater than} or equal to the diagonal
entries below them, so that $P(\beta)^{(\kappa)}$ is row equivalent
to a diagonal matrix.  Thus, for all $\beta$ such that
$\beta_{\kappa + 1} < \beta \leq \beta_{\kappa}$, $P(\beta)$ is
equivalent to a diagonal matrix whose entries are:

\begin{enumerate}
\item $t^{\mu_{i}}a_{ii}$, in rows $1 \leq i \leq \ell$.
\item $s_{\ell+1,\ell+1}$ in row $\ell +1$, whose order is
$\|a_{\ell + 1,\ell + 1} \| + \beta_1 $.
\item In rows $\ell + 2$ through $j_{\kappa} -1$, the entry is
either \begin{enumerate}
\item $a_{\tau, \tau}$, if $\tau \neq j_{p}$ for any $p$, $1 \leq p < \kappa$,
or
\item $s_{j_{p},j_{p}}$, whose order is $\| a_{\ell + 1, j_{p}} \| +
\beta_{j_{(p+1)}} $.
\end{enumerate}
\item In row $j_{\kappa}$, the entry is $t^{\beta} a_{\ell + 1,
j_{\kappa}}$.
\item In all rows $\tau$ below $j_{\kappa}$, the entry is $a_{\tau,
\tau}$.
\end{enumerate}

 We claim that the orders
of these entries are in decreasing order along the diagonal. Note
that when $\beta = \beta_{i}$, by the definition of $\beta_{i}$ we
have
\[ \| t^{\beta} a_{\ell + 1, j_{i}} \| = \| t^{\beta_{i}} a_{\ell + 1, j_{i}} \| =
\| a_{j_{i}j_{i} } \|, \] so that, in the diagonal matrix above,
initially the size of the invariant partition of $P(\beta)$ in row
$j_{i}$ is unchanged.  It then decreases until $\beta =
\beta_{i+1}$.  Note that we must have, for all $\beta > \beta_{i+1},
$
\[ \| t^{\beta} a_{\ell + 1, j_{i}} \| \geq \| a_{(j_{i} +1),
(j_{i}+1)} \|, \] else
\[ \| a_{(j_{i} +1), (j_{i}+1)} \| >  \| t^{\beta} a_{\ell + 1, j_{i}} \| \geq  \|
 t^{\beta} a_{\ell + 1, (j_{i}+ 1)} \|, \] which would imply there
existed some value of $\beta > \beta_{i+1}$ with $\| a_{(j_{i} +1),
(j_{i}+1)} \| = \| t^{\beta} a_{\ell + 1, (j_{i}+ 1)} \|$,
contradicting the definition of $\beta_{i+1}$.  Thus, for all values
of $\beta$ such that $\beta_{i+1} < \beta \leq \beta_{i}$ we have
\[ \| a_{(j_{i} +1),
(j_{i}+1)} \| \leq \| t^{\beta} a_{\ell + 1, j_{i}} \|  \leq \|
a_{j_{i}j_{i} } \|. \]

 Hence, for $\beta$ such that $\beta_{i+1} <
\beta \leq \beta_{i}$, the size of the invariant partition
$inv(P(\beta))$ decreases only in row $j_{i}$, with all other
constant. Consequently, for $\beta$ between $\beta_{0} = \mu_{\ell +
1}$ and $\beta_1$, the invariant partition only decreases in row
$j_{0} = \ell +1$.  After this, the size in row $\ell + 1$ is fixed
until $\beta = \beta_{1}$, and the partition now decreases in row
$j_{1}$ until $\beta = \beta_{2}$, whereupon the partition decreases
in row $j_{2}$, etc.  Since for differing values of $\beta$ we
calculate the filling in the $\mu_{\ell + 1}$-strip, this completes
the proof of the lemma.
\end{proof}

The Ordering Lemma states that, once a pair is in
$\mu$-$\widehat{\nu}$-generic form $(D_{\mu},L D_{\widehat{\nu}})$,
decreasing the last non-zero entry $\mu_{\ell +1}$ in $\mu$ will
{\em not} change any parts of the left filling $m_{ij}$, for $1 \leq
i \leq \ell$.   The Corollary below explains what can be said about
the effect on the {\em right} filling, upon decreasing the value of
$\mu_{\ell + 1}$;

\begin{cor}Suppose $\mu = (\mu_{1}, \dots, \mu_{\ell} ,
\mu_{\ell + 1}, 0, \dots 0)$, and $\nu$ are ${\mathbb
R}$-partitions, with $L$ a $\mu$-$\widehat{\nu}$-generic matrix. Let
$\mu^* =(\mu_{1}, \dots, \mu_{\ell} , \beta, 0, \dots 0)$, where
$\beta$ is chosen so that, according to the Ordering Lemma, the
shape of the invariant partition is changed only in rows $\ell +1$
through row $\kappa$ (by reducing the $\mu_{\ell + 1}$-strip), and
reducing the size of row $\kappa$ so that it is greater than or
equal to the length of row $\kappa +1$ below it in the original
shape.  Then, no part of the $\nu$-filling over this reduced $\mu^*$
that appears in rows below $\kappa$ will be changed (they will
occupy the same row and column location in the new L${\mathbb
R}$-filling). That is, if $\{ k_{ij} \}$ is the $\nu$ filling of the
pair over $\mu$, and $\{ k_{ij}^* \}$ is the $\nu$-filling over
$\mu^*$ (using the same $\mu$-$\widehat{\nu}$-generic matrix $L$),
then $k_{ij} = k_{ij}^*$ for all $j > \kappa$.  \label{don't-move}
\end{cor}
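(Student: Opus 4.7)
Plan: The proof reduces the claim, via Theorem~\ref{symm-fill}, to a row-by-row comparison of the partitions in the sequence that encodes the right filling, and then applies the reduction argument from the proof of the Ordering Lemma to each intermediate matrix in this sequence.

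\medskip

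By Theorem~\ref{symm-fill}, the right filling of $(D_\mu, LD_{\widehat{\nu}})$ is encoded by the sequence of invariant partitions $\lambda^{(i)} := \operatorname{inv}\bigl(D_\mu L D_{\widehat{\nu^{(i)}}}\bigr)$ for $i=0,\dots,r$, where $\nu^{(i)} = (\nu_1,\dots,\nu_i,0,\dots,0)$; explicitly $k_{ij} = \lambda^{(i)}_j - \lambda^{(i-1)}_j$ for $i \leq j$. Replacing $\mu$ by $\mu^*$ gives an analogous sequence $(\lambda^*)^{(i)}$ encoding the parts $\{k_{ij}^*\}$. It therefore suffices to show $\lambda^{(i)}_j = (\lambda^*)^{(i)}_j$ for every $i$ and every $j > \kappa$.

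\medskip

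I would split this into two cases. When $i \geq j$, the $p$-th strip for $p > j$ contributes nothing to row $j$, so $\lambda^{(i)}_j$ has stabilized at $\lambda_j$, and likewise $(\lambda^*)^{(i)}_j = \lambda^*_j$; the hypothesis on $\beta$ in the statement is exactly the content of the Ordering Lemma applied to the full pair $(D_\mu, LD_{\widehat{\nu}})$, which gives $\lambda_j = \lambda^*_j$ for $j > \kappa$. The substantive case is $i < j$ with $j > \kappa$. For this I would apply the proof of the Ordering Lemma directly to the matrix $D_{\mu(\ell+1;\gamma)} L D_{\widehat{\nu^{(i)}}}$, viewing $\gamma$ as a parameter decreasing continuously from $\mu_{\ell+1}$ to $\beta$. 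After replacing $L D_{\widehat{\nu^{(i)}}}$ by an equivalent $\mu$-generic matrix $N^{(i)}$ and carrying out the same row-reduction, one obtains a sequence of critical rows $\ell+1 = j^{(i)}_0 < j^{(i)}_1 < \cdots$ and thresholds $\mu_{\ell+1} = \gamma^{(i)}_0 > \gamma^{(i)}_1 > \cdots$ such that $\operatorname{inv}\bigl(D_{\mu(\ell+1;\gamma)} L D_{\widehat{\nu^{(i)}}}\bigr)$ changes only in row $j^{(i)}_p$ as $\gamma$ crosses $(\gamma^{(i)}_{p+1}, \gamma^{(i)}_p]$.

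\medskip

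The main obstacle is to verify that no critical row $j^{(i)}_p$ exceeding $\kappa$ is activated during the descent of $\gamma$ from $\mu_{\ell+1}$ to $\beta$. For $i = r$ this is precisely the hypothesis on $\beta$. For $i < r$ one must compare the entries of the $\mu$-generic matrix $N^{(i)}$ for $LD_{\widehat{\nu^{(i)}}}$ with those of $N = N^{(r)}$: by Corollary~\ref{row-inv}, $\|a^{(i)}_{j,j}\| = \lambda^{(i)}_j - \mu_j$, and the monotonicity $\lambda^{(i)} \subseteq \lambda^{(r)}$, combined with the interleaving property of Theorem~\ref{interleaving} and a careful tracking of the off-diagonal entries $\|a^{(i)}_{\ell+1, j}\|$ (which are controlled by the orders of certain minors of $L$ in the left columns, where $D_{\widehat{\nu^{(i)}}}$ agrees with the identity), should show that for $j > \kappa$ the threshold $\gamma^{(i)}_p$ falls strictly below $\beta$; hence no such row is activated. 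Combined with the stabilization argument of the easy case, this yields $\lambda^{(i)}_j = (\lambda^*)^{(i)}_j$ for all $i$ and $j > \kappa$, and therefore $k_{ij} = k_{ij}^*$ for $j > \kappa$.
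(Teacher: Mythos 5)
Your reduction via Theorem~\ref{symm-fill} is sound, and your ``easy case'' ($i \geq j$) is handled correctly: for $j > \kappa$ the stabilized value $\lambda_j$ is unchanged by the hypothesis on $\beta$. The problem is the case you yourself flag as ``the main obstacle'': showing that for each intermediate product $D_{\mu(\ell+1;\gamma)}LD_{\widehat{\nu^{(i)}}}$ with $i<r$, no critical row beyond $\kappa$ is activated as $\gamma$ descends from $\mu_{\ell+1}$ to $\beta$. You do not prove this --- you assert that a comparison of the $\mu$-generic forms $N^{(i)}$ and $N^{(r)}$ via Corollary~\ref{row-inv} and Theorem~\ref{interleaving} ``should show'' it. But that claim \emph{is} the content of the corollary, and the comparison is not routine: the matrices $N^{(i)}$ for different $i$ are produced by different reductions, their off-diagonal entries are not simply related to one another, and the thresholds $\gamma^{(i)}_p$ depend on differences $\| a^{(i)}_{jj}\| - \| a^{(i)}_{\ell+1,j}\|$ that you have not controlled. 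As written, the argument has a genuine gap at its central step.

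The paper closes this gap by a different organization: induction on $\sigma$, the number of nonzero parts of $\nu$. The base case $\sigma=1$ is an explicit matrix computation, using that the $1$-strip vanishes at and below row $\ell+3$ (so $\|a_{jj}\|=0$ there) and that row $\ell+2$ of $inv(P(\beta))$ is untouched as long as $\beta \geq \|a_{\ell+2,\ell+2}\|$, which is exactly the length of the $1$-strip in row $\ell+2$. The inductive step never deforms the intermediate products directly: for $j>\kappa$ the outer shape $\lambda_j$ is unchanged by the hypothesis on $\beta$, the inner shape $\lambda^{(\sigma)}_j$ and the parts $k_{ij}$, $i\le\sigma$, are unchanged by the induction hypothesis, and the new strip is then forced by subtraction. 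If you want to keep your formulation, you must either supply the missing entry-order comparison in full or restructure along these inductive lines, where the subtraction argument replaces direct control of the thresholds $\gamma^{(i)}_p$.
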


\begin{proof} We will prove this by induction on $\sigma$, the number of non-zero parts
in the ${\mathbb R}$-partition $\nu$.  When $\sigma = 1$, computing
the filling is, given $\mu$, just computing the $1$-strip of the
right filling, which may be obtained by calculating $inv(\mu L
\nu_1)$. Suppose $N_{1}$ is a $\mu$-generic matrix in the same orbit
as this pair, so $inv(\mu L \nu_1) = inv(D_{\mu}N_1)$. Then we may
assume
\[  D_{\mu}N_1 = \begin{bmatrix} t^{\mu_1} a_{11} & t^{\mu_1} a_{12} &
\dots & \dots&\dots& \dots& \dots & \dots & t^{\mu_1}a_{1r} \\
0 & t^{\mu_2}a_{22} & t^{\mu_2}a_{23} & &  & & & &\vdots \\
\vdots & 0& \ddots & \ddots & & & & & \vdots \\
\vdots & &\ddots & t^{\mu_{\ell}} a_{\ell, \ell} &
t^{\mu_{\ell}}a_{\ell, \ell + 1} & \dots & \dots &\dots &
t^{\mu_{\ell}}a_{\ell, r} \\
\vdots & & & \ddots & t^{\mu_{\ell + 1}}a_{\ell + 1, \ell + 1} &
t^{\mu_{\ell + 1}}a_{\ell + 1, \ell + 2} & \dots & \dots &
t^{\mu_{\ell + 1}}a_{\ell + 1, r}
\\
\vdots & &&& \ddots & a_{\ell + 2, \ell + 2} & \dots & \dots &
a_{\ell + 2,
r} \\
\vdots & & & & & \ddots & \ddots & & \vdots \\
\vdots & & & & & &\ddots  & a_{r-1, r-1} &a_{r-1,r}\\
 0 & \dots & \dots    &\dots  &\dots  &\dots  &\dots &0 & a_{rr} \end{bmatrix}. \]
As in the proof of the Ordering Lemma, this matrix is equivalent to:
\[ \begin{bmatrix} t^{\mu_1} a_{11} & 0 &
\dots & \dots&\dots& \dots& \dots &\dots & 0\\
0 & t^{\mu_2}a_{22} & 0 & &  & & & &\vdots \\
\vdots & 0& \ddots & \ddots & & & & &\vdots \\
\vdots & &\ddots & t^{\mu_{\ell}} a_{\ell, \ell} & 0& \dots & \dots &\dots &0 \\
\vdots & & & \ddots & t^{\mu_{\ell + 1}}a_{\ell + 1, \ell + 1} &
t^{\mu_{\ell + 1}}a_{\ell + 1, \ell + 2} & \dots &\dots &
t^{\mu_{\ell + 1}}a_{\ell + 1, r}
\\
\vdots & &&& \ddots & a_{\ell + 2, \ell + 2} & 0 & \dots & 0
 \\
\vdots & & & & & \ddots & \ddots &\ddots & \vdots \\
\vdots & & & & & &\ddots & a_{r-1, r-1} &0\\
 0 & \dots & \dots    &\dots  &\dots &\dots  &\dots  &0 & a_{rr} \end{bmatrix}. \]
In fact, since we know $inv(D_{\mu}N_1)$ will determine the
$1$-strip of a right L${\mathbb R}$-filling, there can be no
non-zero parts of the $1$-strip at or below row $\ell + 3$, and
therefore $\| a_{jj} \| = 0$ for $j \geq \ell + 3$, and so we can
even conclude $D_{\mu}N_1$ is equivalent to:
\[  \begin{bmatrix} t^{\mu_1} a_{11} & 0 &
\dots & \dots&\dots& \dots& \dots &\dots & 0\\
0 & t^{\mu_2}a_{22} & 0 & &  & & & &\vdots \\
\vdots & 0& \ddots & \ddots & & & & &\vdots \\
\vdots & &\ddots & t^{\mu_{\ell}} a_{\ell, \ell} & 0& \dots & \dots &\dots &0 \\
\vdots & & & \ddots & t^{\mu_{\ell + 1}}a_{\ell + 1, \ell + 1} &
t^{\mu_{\ell + 1}}a_{\ell + 1, \ell + 2} &0&\dots & 0
\\
\vdots & &&& \ddots & a_{\ell + 2, \ell + 2} & 0 & \dots & 0
 \\
\vdots & & & & & \ddots & \ddots &\ddots & \vdots \\
\vdots & & & & & &\ddots & a_{r-1, r-1} &0\\
 0 & \dots & \dots    &\dots  &\dots &\dots  &\dots  &0 & a_{rr} \end{bmatrix}. \]
If we replace $\mu_{\ell + 1}$ above with a parameter $\beta$,
calling the resulting matrix $P(\beta)$ as in the Ordering Lemma, we
see that as long as $ \beta  \geq \| a_{\ell + 2, \ell + 2} \|$,
then $\beta + \| a_{\ell + 1, \ell + 2} \| \geq \| a_{\ell + 2, \ell
+ 2} \|$, and so the invariant partition of $P(\beta)$ will remain
unchanged in row $\ell + 2$. Since in this case $\| a_{\ell + 2,
\ell + 2} \|$ is the size of the $1$-strip in row $\ell + 2$, we see
the claim is proved in this case.

Inductively, we will assume the result holds for all parts $\{
k_{ij} \}$ of the $\nu$-filling, for $1 \leq i \leq \sigma$, and
prove this is also true for $\sigma + 1$.

We will calculate the filling of $\mu L \nu_1, \dots, \nu_{\sigma},
\nu_{\sigma + 1}$.  Again, following the proof of the Ordering
Lemma, if we reduce $\mu_{\ell +1}$ so that we decrease the size of
rows of the invariant partition of $D_{\mu}N$ in rows $\ell +1$
through $\kappa$, then overall shape of the rows below row $\kappa$
is unchanged, and the parts $k_{ij}$ for $1 \leq i \leq \sigma$,
$\kappa \leq j \leq r$ are unchanged as well.  Thus, the parts
$k_{\sigma+1, j}$ for $\kappa \leq j \leq r$ cannot change, either.
\end{proof}

We have shown that, given a pair $(M,N)$, we can find in its orbit a
pair $(D_{\mu},N^{*})$, where $D_{\mu}$ is diagonal and $N^{*}$ is
$\mu$-generic from which we can determine a right filling of
$\lambda / \mu$ with content $\nu$.  We are also able to apply the
same matrix operations to $M$ (instead of $N$), yielding a pair
$(M^{*},D_{\nu})$ from which we could determine a left filling
(using $M^{*}$) of $\lambda / \nu$ with content $\mu$. Thus, at the
matrix level, we have a possible mapping from fillings with content
$\nu$ to fillings with content $\mu$ (in their respective skew
shapes). Due to the lack of uniqueness between orbits and fillings,
it is not even clear that we have defined a function, much less a
bijection.

We shall show below that given a matrix pair $(M,N)$ from which we
determine a filling of $\lambda / \mu$ with content $\nu$, the
associated filling (determined by the same pair) of $\lambda / \nu$
with content $\mu$ is, in, fact, independent of the pair, but
depends only on the filling.   In fact, we will show that the
bijection determined by matrices over our valuation rings is the
same as the combinatorially defined bijection between fillings found
in \cite{kerber}.

\begin{thm} Suppose that $(M,N)$ and $(M',N')$ are two matrix pairs
such that $inv(M)=inv(M') = \mu$, $inv(N) = inv(N') = \nu$ and
$inv(MN) = inv(M'N') = \lambda$.  Suppose further that both pairs
yield the same right-filling $\{ k_{ij} \}$ of $\lambda / \mu$ with
content $\nu$.  Then, both pairs yield the same left-filling $\{
m_{ij} \}$ of $\lambda / \nu$ with content $\mu$.
\end{thm}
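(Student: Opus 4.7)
The plan is to pass to $\mu$-$\widehat{\nu}$-generic representatives and then use the Ordering Lemma to show that the left filling is forced by the right filling through a continuous deformation of the partition parameters.

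By the corollary following the definition of $\mu$-$\widehat{\nu}$-generic matrices, I may replace $(M,N)$ and $(M',N')$ by pairs of the form $(D_\mu, L D_{\widehat{\nu}})$ and $(D_\mu, L' D_{\widehat{\nu}})$ for $\mu$-$\widehat{\nu}$-generic matrices $L$ and $L'$. By Theorem~\ref{symm-fill}, both the right and left fillings of each pair can be read off from $L$ (respectively $L'$) alone, via the sequences of invariant partitions $inv(\mu L \nu_1, \ldots , \nu_j)$ and $inv(\mu_j, \ldots, \mu_1 L \nu)$. The hypothesis that the right fillings agree thus translates to the matrix-level statement that $inv(\mu L \nu_1, \ldots , \nu_j) = inv(\mu L' \nu_1, \ldots , \nu_j)$ for every $j$.

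I would then proceed by induction on the number of nonzero parts of $\mu$. For the inductive step, let $\mu_{\ell+1}$ denote the last nonzero part of $\mu$, and consider the one-parameter family of pairs $(D_{\mu^{(\beta)}}, L D_{\widehat{\nu}})$, where $\mu^{(\beta)} = (\mu_1, \ldots, \mu_\ell, \beta, 0, \ldots, 0)$ and $\beta$ decreases continuously from $\mu_{\ell+1}$ to $0$. By the Ordering Lemma applied to left fillings, the $(\ell+1)$-strip of the left filling contracts from the top-right: the strip loses length from a specific row $j_i$ as $\beta$ crosses each critical value $\beta_i$, with the row indices $j_0 < j_1 < j_2 < \cdots$ and values $\beta_0 > \beta_1 > \cdots$ determined by the combinatorial data of the deformation. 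Simultaneously, by Corollary~\ref{don't-move}, the parts of the right filling in rows strictly below the currently shrinking row remain fixed throughout the deformation, so the right filling evolves in a tightly constrained way.

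The heart of the argument is to show that the sequence $(j_i, \beta_i)$ is recoverable from the right filling alone. This is plausible because the invariant partition $inv(D_{\mu^{(\beta)}} L D_{\widehat{\nu}})$ controls the shape of the intermediate right filling through its relation to $\lambda^{(\beta)} = \mu^{(\beta)} + \nu$-content, and the combinatorial constraints L${\mathbb R}$1--L${\mathbb R}$4, together with Corollary~\ref{same-same} applied to the truncation, force the row where $\lambda^{(\beta)}$ next decreases. Once we show the $(\ell+1)$-strips for $L$ and $L'$ agree, we pass to $\beta=0$, obtaining reduced pairs with the same right filling (by the preserved-row rule) and one fewer nonzero $\mu$-part, so the inductive hypothesis completes the proof.

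The main obstacle will be justifying rigorously that the critical values $\beta_i$ and row indices $j_i$ in the Ordering Lemma depend only on the right filling: the definitions used in the proof of the Ordering Lemma refer to orders of matrix entries $\|a_{ij}\|$ of a $\mu$-generic matrix, which are not manifestly orbit invariants. Using Proposition~\ref{big-prop} and the interlacing of Theorem~\ref{interleaving}, one must show that the specific differences $\|a_{jj}\| - \|a_{\ell+1,j}\|$ that determine the $\beta_i$ are expressible in terms of orders of minors, and ultimately in terms of the filling itself, so that the deformation's trajectory is intrinsic to the combinatorial data rather than to the matrix realization.
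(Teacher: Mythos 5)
You have assembled the right toolkit (passage to a $\mu$-$\widehat{\nu}$-generic representative, Theorem~\ref{symm-fill}, the Ordering Lemma, Corollaries~\ref{same-same} and~\ref{don't-move}), but the proof has a genuine gap exactly where you flag ``the main obstacle'': you never establish that the critical values $\beta_i$ and row indices $j_i$ of the deformation are recoverable from the right filling alone, and this is not a technical loose end --- it \emph{is} the theorem. The quantities $\|a_{j,j}\|-\|a_{\ell+1,j}\|$ that define the $\beta_i$ are orders of individual entries of a $\mu$-generic matrix, not orders of minors of the form covered by Proposition~\ref{big-prop}, so there is no ready-made invariance statement for them; asserting that L${\mathbb R}$1--L${\mathbb R}$4 together with Corollary~\ref{same-same} ``force'' the next row to decrease is precisely the uniqueness claim you are trying to prove, restated. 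A secondary omission: the Ordering Lemma as stated requires all parts of the filling to be non-negative, so you must first invoke the scalar-shift reduction (Corollaries~\ref{diag-shift} and~\ref{fill-biject}) before the deformation argument is even available.

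The paper sidesteps the intrinsicness problem entirely. Its primary induction is on $\sigma$, the number of non-zero parts of $\nu$ (not of $\mu$), with a nested downward induction on $\ell$. The inductive hypothesis on $\sigma$ hands you, for free, the unique left filling of $\mu$ over $(\nu_1,\dots,\nu_\sigma)$, hence the \emph{shape} of the ``switching region'' formed by the $\mu_{\ell+1}$- and $\nu_{\sigma+1}$-strips; the hypothesis of the theorem hands you the position of the $\nu_{\sigma+1}$-strip inside it. The new position of the $\mu_{\ell+1}$-strip is then pinned down by a purely combinatorial block-by-block argument (the $P_i$/$Q_j$ decomposition and the two cases), which uses only the \emph{qualitative} content of the Ordering Lemma --- strips grow as initial segments, bottom-to-top and left-to-right --- together with Corollary~\ref{don't-move}, and never needs to identify the $\beta_i$ from the filling. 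If you want to salvage your route, you would have to either prove that the entry orders $\|a_{\ell+1,j}\|$ of a $\mu$-generic matrix are orbit invariants determined by the filling (which is at least as hard as the theorem) or replace that step with something like the paper's switching argument.
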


\begin{proof}   Let us fix a matrix pair, which we may assume is in the form
$(D_{\mu}, LD_{\widehat{\nu}})$ for some ${\mathbb R}$-partitions
$\mu$ and $\nu$, where $L$ is $\mu$-$\widehat{\nu}$-generic.  Let us
suppose $\lambda = inv(D_{\mu} LD_{\widehat{\nu}})$.  We may
calculate from this pair a L${\mathbb R}$-filling of $\lambda / \mu$
with content $\nu$, and also a filling of $\lambda / \nu$ with
content $\mu$.  Since the tableau for a L${\mathbb R}$ filling
determines a skew shape in which a partition is distributed over
another, we shall also refer to a left filling of $\lambda / \nu$
with content $\mu$, determined by the pair $(D_{\mu}, L
D_{\widehat{\nu}})$, as a filling of $\mu$ ``over" $\nu$, since the
content of $\mu$ is to be spread around the fixed partition $\nu$.
Similarly we shall call a right filling of $\lambda / \mu$ with
content $\nu$ a filling of $\nu$ over $\mu$. We will denote the
parts of the right filling of $(M,N)$ of $\nu$ over $\mu$ by $\{
k_{ij} \}$ and the parts of the left filling of $\mu$ over $\nu$ by
$\{ m_{ij} \}$.  In all cases, we will refer to fillings determined
by a fixed $\mu$-$\widehat{\nu}$-generic matrix $L$.

Let us first note that it is sufficient to prove the result for
partitions $\mu$ and $\nu$ such that, given some
$\mu$-$\widehat{\nu}$-generic matrix $L$, {\em all} $k_{ij}$ and
$m_{ij}$ in these fillings are non-negative. This follows since, as
noted in Corollary~\ref{diag-shift}, given any matrix pair $(M,N)$,
there are scalar matrices $D(\beta)$ and $D(\beta)$ such at all
parts of the left and right fillings of the pair
$(D(\beta)M,ND(\alpha))$ are positive. By
Corollary~\ref{fill-biject}, then, the theorem will follow once we
prove it for fillings with non-negative parts.

So, let us suppose that all parts of the left and right fillings for
both pairs $(M,N)$ and $(M',N')$ are non-negative, so that the
Ordering Lemma applies.

Assume, for purposes of induction, that given any ${\mathbb
R}$-partition $\nu$ with $\sigma$-many non-zero parts, and given a
$\nu$-filling $\{ k_{ij} \}$ over some ${\mathbb R}$-partition
$\mu$, that there is a {\em unique} $\mu$-filling $\{ m_{ij}^* \}$
over this $\nu$. Note that this is trivially true when $\sigma = 0$.
We shall prove this result holds for partitions $\nu$ with
$\sigma+1$ many non-zero parts.  We will fix a right filling $\{
k_{ij} \}$ of $(D_{\mu}, L D_{\widehat{\nu}})$, and show that its
left filling $\{ m_{ij} \}$ is uniquely determined.

Given matrices as above, we can represent the shapes of the
$\nu$-filling over $\mu$ below, highlighting the $(\sigma +
1)$-strip (note, the $(\sigma + 1)$-strip need not start in the
bottom row of the diagram):

\vspace{0.2in}

\psset{unit=0.5cm}
\begin{pspicture}(14,8)
\newgray{llgray}{0.92}
\psline[linewidth=3pt](19,6)(19,7)(20,7)(23,7)(23,8)(0,8)(0,1)
\psline(0,2)(5,2)(5,3)(9,3)(9,4)(11,4)(11,5)(14,5)(14,6)(16,6)(16,7)(18,7)(18,8)
\psline[linewidth=3pt](0,1)(0,0)(5,0)(5,1)(10,1)(10,2)(12,2)(12,3)(14,3)(14,4)(17,4)(17,5)(19,5)(19,6)
\rput{*0}(6,5){$\mu_{1}, \dots , \mu_{\ell}, \mu_{\ell +1}$}
\rput{*20}(10,2.75){$\nu_1, \dots , \nu_{\sigma}$}
\psframe[fillstyle=solid,fillcolor=gray](5,0)(9,1)
\rput{*0}(7,0.5){$\nu_{\sigma + 1}$}
\psframe[fillstyle=solid,fillcolor=gray](10,1)(12,2)
\rput{*0}(11,1.5){$\nu_{\sigma + 1}$}
\psframe[fillstyle=solid,fillcolor=gray](12,2)(14,3)
\rput{*0}(13,2.5){$\nu_{\sigma + 1}$}
\psframe[fillstyle=solid,fillcolor=gray](14,3)(16,4)
\rput{*0}(15,3.5){$\nu_{\sigma + 1}$}
\psframe[fillstyle=solid,fillcolor=gray](17,4)(19,5)
\rput{*0}(18,4.5){$\dots$} \rput{*0}(25,3.5){$\mathit{(1)}$}
\end{pspicture}

\vspace{0.2in}

By Corollary~\ref{same-same}, the parts of the right filling of
$\nu_1, \dots, \nu_{\sigma}, \nu_{\sigma + 1}$ over $\mu$ extends
the filling of $\nu_1, \dots, \nu_{\sigma}$ over $\mu$.  In other
words, we can represent the filling of $\nu_1, \dots, \nu_{\sigma}$
over $\mu_1, \dots, \mu_{\ell}, \mu_{\ell +1 }$ using the {\em same}
shape (and the same filling) as above (outlined in bold), after
removing the $\sigma+1$-strip:

\vspace{0.3in}

\begin{pspicture}(14,8)
\psline[linewidth=3pt](19,6)(19,7)(20,7)(23,7)(23,8)(0,8)(0,1)
\psline(0,2)(5,2)(5,3)(9,3)(9,4)(11,4)(11,5)(14,5)(14,6)(16,6)(16,7)(18,7)(18,8)
\psline[linewidth=3pt](0,1)(0,0)(5,0)(5,1)(10,1)(10,2)(12,2)(12,3)(14,3)(14,4)(17,4)(17,5)(19,5)(19,6)
\rput{*0}(6,5){$\mu_{1}, \dots , \mu_{\ell}, \mu_{\ell +1}$}
\rput{*20}(10,2.75){$\nu_1, \dots , \nu_{\sigma}$}
\rput{*0}(25,3.5){$\mathit{(2)}$}
\end{pspicture}

\vspace{0.2in}

\noindent where the lower portion labeled $\nu_1, \dots
\nu_{\sigma}$ denotes the L${\mathbb R}$ filling of $(\nu_1, \dots ,
\nu_{\sigma})$ over $\mu_1, \dots, \mu_{\ell}, \mu_{\ell + 1}$.

By the inductive hypothesis on $\sigma$, we know the associated left
filling of $\mu_1, \dots, \mu_{\ell}, \mu_{\ell +1}$ over $\nu_1,
\dots, \nu_{\sigma}$ is {\em uniquely} determined by the above
filling of $\nu_{1}, \dots , \nu_{\sigma}$ over $\mu_{1}, \dots,
\mu_{\ell}, \mu_{\ell + 1}$.  We denote this below, while
highlighting the $\mu_{\ell + 1}$-strip:

 \vspace{0.3in}

\psset{unit=0.5cm}

\begin{pspicture}(14,8)
\psline[linewidth=3pt](19,6)(19,7)(20,7)(23,7)(23,8)(0,8)(0,1)
\psline[linewidth=3pt](0,1)(0,0)(5,0)(5,1)(10,1)(10,2)(12,2)(12,3)(14,3)(14,4)(17,4)(17,5)(19,5)(19,6)
\psline(0,3)(3,3)(3,4)(5,4)(5,5)(6,5)(6,6)(9,6)(9,7)(10,7)(10,8)
\psline[linewidth=3pt](0,1)(0,0)(5,0)(5,1)(10,1)(10,2)(12,2)(12,3)(14,3)(14,4)(17,4)(17,5)(19,5)(19,6)
\psframe[linestyle=dashed,fillstyle=solid,fillcolor=lightgray](2,0)(5,1)\rput{*0}(3.5,0.5){$\mu_{\ell+1}$}
\psframe[linestyle=dashed,fillstyle=solid,fillcolor=lightgray](7,1)(10,2)\rput{*0}(8.5,1.5){$\mu_{\ell+1}$}
\psframe[linestyle=dashed,fillstyle=solid,fillcolor=lightgray](10,2)(12,3)\rput{*0}(11,2.5){$\mu_{\ell+1}$}
\psframe[linestyle=dashed,fillstyle=solid,fillcolor=lightgray](12,3)(14,4)\rput{*0}(13,3.5){$\mu_{\ell+1}$}
\psframe[linestyle=dashed,fillstyle=solid,fillcolor=lightgray](15,4)(17,5)\rput{*0}(16,4.5){$\mu_{\ell+1}$}
\psframe[linestyle=dashed,fillstyle=solid,fillcolor=lightgray](17,5)(19,6)\rput{*0}(18,5.5){$\dots$}
\rput{*0}(3,6){$\nu_1, \dots , \nu_{\sigma}$}
\rput{*0}(9,4.5){$\mu_{1}, \dots ,
\mu_{\ell}$}\rput{*0}(25,3.5){$\mathit{(3)}$}
\end{pspicture}

 \vspace{0.2in}

The shapes outlined in bold diagrams $\mathit{(1)}$, $\mathit{(2)}$
and $\mathit{(3)}$ above are the same (diagram $\mathit{(2)}$ is the
shape of $\nu_1, \dots, \nu_{\sigma}$ over $\mu_1, \dots, \mu_{\ell
+ 1}$, and diagram $\mathit{(3)}$ is the shape of $\mu_1, \dots,
\mu_{\ell + 1}$ over $\nu_1, \dots, \nu_{\sigma}$). This observation
allows us to draw a ``mixed tableau", which does not have an
immediate matrix interpretation. We will take the tableau depicting
the $\mu$-filling over $(\nu_{1}, \dots , \nu_{\sigma}, 0, \dots )$
outlined in bold in diagram $\mathit{(1)}$ and insert it inside the
tableau of diagram $\mathit{(3)}$ above depicting the filling of
$(\nu_1, \dots , \nu_{\sigma}, \nu_{\sigma + 1}, 0, \dots )$ over
$\mu$ (labeled $\bigstar$ below):

\vspace{0.5in}
\begin{pspicture}(14,6)
\psframe[fillstyle=solid,fillcolor=gray](5,0)(9,1)
\rput{*0}(7,0.5){$\nu_{\sigma + 1}$}
\psframe[fillstyle=solid,fillcolor=gray](10,1)(12,2)
\rput{*0}(11,1.5){$\nu_{\sigma + 1}$}
\psframe[fillstyle=solid,fillcolor=gray](12,2)(14,3)
\rput{*0}(13,2.5){$\nu_{\sigma + 1}$}
\psframe[fillstyle=solid,fillcolor=gray](14,3)(16,4)
\rput{*0}(15,3.5){$\nu_{\sigma + 1}$}
\psframe[fillstyle=solid,fillcolor=gray](17,4)(19,5)
\rput{*0}(18,4.5){$\dots$}
\psline[linewidth=3pt](19,6)(19,7)(20,7)(23,7)(23,8)(0,8)(0,1)
\psline[linewidth=3pt](0,1)(0,0)(5,0)(5,1)(10,1)(10,2)(12,2)(12,3)(14,3)(14,4)(17,4)(17,5)(19,5)(19,6)
\psline(0,3)(3,3)(3,4)(5,4)(5,5)(6,5)(6,6)(9,6)(9,7)(10,7)(10,8)
\psline[linewidth=3pt](0,1)(0,0)(5,0)(5,1)(10,1)(10,2)(12,2)(12,3)(14,3)(14,4)(17,4)(17,5)(19,5)(19,6)
\psframe[linestyle=dashed,fillstyle=solid,fillcolor=lightgray](2,0)(5,1)\rput{*0}(3.5,0.5){$\mu_{\ell+1}$}
\psframe[linestyle=dashed,fillstyle=solid,fillcolor=lightgray](7,1)(10,2)\rput{*0}(8.5,1.5){$\mu_{\ell+1}$}
\psframe[linestyle=dashed,fillstyle=solid,fillcolor=lightgray](10,2)(12,3)\rput{*0}(11,2.5){$\mu_{\ell+1}$}
\psframe[linestyle=dashed,fillstyle=solid,fillcolor=lightgray](12,3)(14,4)\rput{*0}(13,3.5){$\mu_{\ell+1}$}
\psframe[linestyle=dashed,fillstyle=solid,fillcolor=lightgray](15,4)(17,5)\rput{*0}(16,4.5){$\mu_{\ell+1}$}
\psframe[linestyle=dashed,fillstyle=solid,fillcolor=lightgray](17,5)(19,6)\rput{*0}(18,5.5){$\dots$}
\rput{*0}(3,6){$\nu_1, \dots , \nu_{\sigma}$}
\rput{*0}(9,4.5){$\mu_{1}, \dots , \mu_{\ell}$}
\rput{*0}(25,3.5){$\bigstar$}
\end{pspicture}

\vspace{0.3in} The shape of $\bigstar$ above is the same as the
shape of $inv(\mu_{\ell +1}, \mu_{\ell}, \dots, \mu_{1}, L, \nu_{1},
\dots, \nu_{\sigma}, \nu_{\sigma + 1})$.  In particular, this is the
same shape obtained from the tableau depicting the $\mu$-filling of
$(D_{\mu}, LD_{(0,\dots , 0, \nu_{\sigma + 1}, \nu_{\sigma},\dots ,
\nu_{1})})$, denoted by $\bigstar \bigstar$ below:

\vspace{0.2in}

\begin{pspicture}(14,8)
\newgray{llgray}{0.92}
\pspolygon(0,8)(0,0)
(4,0)(9,0)(9,1)(12,1)(12,2)(14,2)(14,3)(16,3)(16,4)(19,4)(19,7)(20,7)(23,7)(23,8)
\psline(0,2)(3,2)(3,3)(6,3)(6,4)(9,4)(9,5)(10,5)(10,6)(13,6)(13,7)(16,7)(16,8)
\rput{*0}(25,3.5){$\bigstar \bigstar$}\rput{*0}(3,6){$\nu_1, \dots ,
\nu_{\sigma}, \nu_{\sigma + 1}$} \rput{*0}(12,4.75){$\mu_{1}, \dots
, \mu_{\ell}$} \rput{*0}(13,3){$\mu_{\ell + 1}$}
\psline[linestyle=dashed](2,0)(2,1)(7,1)(7,2)(10,2)(10,3)(12,3)(12,4)(15,4)(15,5)(17,5)(17,6)(19,6)
\end{pspicture}

\hspace{1in}

The $\mu_{\ell +1}$-strip in $\bigstar \bigstar$ lies {\em
somewhere} below the dashed line, which represents the boundary of
the shape of $inv(0, \dots , 0 , \mu_{\ell}, \dots, \mu_1, L,
\nu_1,\dots , \nu_{\sigma})$, since, as described above, the shape
of $\mu_1, \dots, \mu_{\ell}$ over $\nu_{1} , \dots, \nu_{\sigma}$
lies inside the final shape of $\bigstar$, so that extending to
$\mu_{\ell +1}$, whether we do this before extending by $\nu_{\sigma
+ 1}$ (as depicted in $\bigstar$), of after (as depicted in
$\bigstar \bigstar$), must result in the $\mu_{\ell + 1}$-strip
lying below the dashed line of $\bigstar \bigstar$. In particular,
the number of boxes in $\bigstar \bigstar$ below the dashed line is
$\mu_{\ell + 1} + \nu_{\sigma + 1}$.

Note that in $\bigstar$, the shape of the outer $\nu_{\sigma
+1}$-strip is given, by hypothesis, from the right filling of
$(\nu_{1}, \dots , \nu_{\sigma}, \nu_{\sigma + 1}, 0, \dots )$ over
$(\mu_{1}, \dots, \mu_{\ell}, \mu_{\ell + 1}, 0, \dots )$. Further,
the shape of the $\mu_{\ell + 1}$-strip in $\bigstar$ is given by
the induction hypothesis on $\sigma$, where we assume the left
filling of $(\mu_{1}, \dots, \mu_{\ell}, \mu_{\ell + 1}, 0, \dots )$
over $(\nu_{1}, \dots , \nu_{\sigma}, 0, \dots )$ is uniquely
determined by the right filling of $(\nu_{1}, \dots , \nu_{\sigma},
0, \dots )$ over $(\mu_{1}, \dots, \mu_{\ell}, \mu_{\ell + 1}, 0,
\dots )$.

As noted above, the skew shape below the dashed line in $\bigstar
\bigstar$ is the same as the shape formed by the $\mu_{\ell + 1}$
and $\nu_{\sigma + 1}$-strips in $\bigstar$.  However, the (as yet
undetermined) $\mu_{\ell + 1}$ -strip in $\bigstar \bigstar$
actually depicts the terms $m_{\ell + 1, j}$ in the left filling of
$\mu = (\mu_1, \dots, \mu_{\ell + 1}, 0 \dots 0)$ over $(\nu_1,
\dots, \nu_{\sigma}, \nu_{\sigma + 1}, 0, \dots, 0)$.

Let us call the region formed by the $\mu_{\ell + 1}$ and
$\nu_{\sigma + 1}$-strips in $\bigstar$ the ``switching region''.
This name is chosen since we will show that from the location of the
$\nu_{\sigma + 1}$ strip in the switching region of $\bigstar$
(which is determined by the right filling of $\nu$ by hypothesis),
and the overall shape of the switching region (which is determined
by the inductive hypothesis applied to $\sigma$), that the shape of
the $\mu_{\ell + 1}$ strip in the switching region of $\bigstar
\bigstar$ is {\em uniquely} determined.  By the Ordering Lemma and
Corollary~\ref{same-same}, once we have determined the shape of the
$\mu_{\ell + 1}$-strip by the above argument, we may reduce
$\mu_{\ell + 1}$ to zero without changing the parts of the filling
of $\mu_{1}, \dots, \mu_{\ell}, 0, \dots 0$ over $\nu_{1}, \dots,
\nu_{\sigma}, \nu_{\sigma + 1}$.  Thus, by downward induction on
$\ell$, we may conclude that the right filling of $\mu$ is also
uniquely determined for ${\mathbb R}$ partitions $\nu$ of length
$\sigma + 1$.

Thus, it remains to show that the $\mu_{\ell + 1}$ strip in
$\bigstar$ is uniquely determined by the $\nu_{\sigma + 1}$ strip
and the shape of the switching region.

Let us first break up the $\nu_{\sigma + 1}$-strip into finitely
many pieces $P_{1}, \dots , P_{k}$ such that any piece is either
wholly below a block of the $\mu_{\ell +1}$-strip in $\bigstar$, or
entirely not under such a block. Similarly, we may break up the
$\mu_{\ell + 1}$-strip in $\bigstar$ into pieces $Q_1, \dots , Q_m$
such that each $Q_i$ is over (and equal in size) to some $P_j$, or
not supported by any of the $\nu_{\sigma + 1}$-strip at all.  That
is, we may re-draw the switching region in $\bigstar$:

\vspace{0.3in}

\begin{pspicture}(14,6)
\newgray{llgray}{0.92}
\psframe[fillstyle=solid,fillcolor=gray](5,0)(9,1)
\rput{*0}(7,0.5){$\nu_{\sigma + 1}$}
\psframe[fillstyle=solid,fillcolor=gray](10,1)(12,2)
\rput{*0}(11,1.5){$\nu_{\sigma + 1}$}
\psframe[fillstyle=solid,fillcolor=gray](12,2)(14,3)
\rput{*0}(13,2.5){$\nu_{\sigma + 1}$}
\psframe[fillstyle=solid,fillcolor=gray](14,3)(16,4)
\rput{*0}(15,3.5){$\nu_{\sigma + 1}$}
\psframe[fillstyle=solid,fillcolor=gray](17,4)(19,5)
\rput{*0}(18,4.5){$\dots$}
\psframe[fillstyle=solid,fillcolor=lightgray](2,0)(5,1)\rput{*0}(3.5,0.5){$\mu_{\ell+1}$}
\psframe[fillstyle=solid,fillcolor=lightgray](7,1)(10,2)\rput{*0}(8.5,1.5){$\mu_{\ell+1}$}
\psframe[fillstyle=solid,fillcolor=lightgray](10,2)(12,3)\rput{*0}(11,2.5){$\mu_{\ell+1}$}
\psframe[fillstyle=solid,fillcolor=lightgray](12,3)(14,4)\rput{*0}(13,3.5){$\mu_{\ell+1}$}
\psframe[fillstyle=solid,fillcolor=lightgray](15,4)(17,5)\rput{*0}(16,4.5){$\mu_{\ell+1}$}
\psframe[fillstyle=solid,fillcolor=lightgray](17,5)(19,6)\rput{*0}(18,5.5){$\dots$}
\end{pspicture}

\vspace{0.3in}

so that it appears in the form:

\vspace{0.3in}

\begin{pspicture}(14,4)
\newgray{llgray}{0.92}
\psframe[fillstyle=solid,fillcolor=gray](5,0)(7,1)
\rput{*0}(6,0.5){$P_1$}
\psframe[fillstyle=solid,fillcolor=gray](7,0)(9,1)
\rput{*0}(8,0.5){$P_2$}
\psframe[fillstyle=solid,fillcolor=gray](10,1)(12,2)
\rput{*0}(11,1.5){$P_3$}
\psframe[fillstyle=solid,fillcolor=gray](12,2)(14,3)
\rput{*0}(13,2.5){$P_4$}
\psframe[fillstyle=solid,fillcolor=gray](14,3)(15,4)
\rput{*0}(14.5,3.5){$P_5$}
\psframe[fillstyle=solid,fillcolor=gray](14,3)(16,4)
\rput{*0}(15,3.5){$P_6$}
\psframe[fillstyle=solid,fillcolor=gray](17,4)(19,5)
\rput{*0}(18,4.5){$\dots$}
\psframe[fillstyle=solid,fillcolor=lightgray](2,0)(5,1)
\rput{*0}(3.5,0.5){$Q_1$}
\psframe[fillstyle=solid,fillcolor=lightgray](7,1)(9,2)
\rput{*0}(8,1.5){$Q_2$}
\psframe[fillstyle=solid,fillcolor=lightgray](9,1)(10,2)
\rput{*0}(9.5,1.5){$Q_3$}
\psframe[fillstyle=solid,fillcolor=lightgray](10,2)(12,3)
\rput{*0}(11,2.5){$Q_4$}
\psframe[fillstyle=solid,fillcolor=lightgray](12,3)(14,4)
\rput{*0}(13,3.5){$Q_5$}
\psframe[fillstyle=solid,fillcolor=lightgray](15,4)(16,5)
\rput{*0}(15.5,4.5){$Q_6$}
\psframe[fillstyle=solid,fillcolor=lightgray](16,4)(17,5)
\rput{*0}(16.5,4.5){$Q_7$}
\psframe[fillstyle=solid,fillcolor=lightgray](17,5)(19,6)
\rput{*0}(18,5.5){$\dots$}
\end{pspicture}

\vspace{0.3in}
 Since the length of the $\mu_{\ell +1}$-strip in $\bigstar \bigstar$ is equal
 to the sum of the lengths of the $Q_{i}$, it will suffice to prove that the
 $\mu_{\ell  +1}$-strip in $\bigstar \bigstar$ is formed by shifting the
 $Q_{i}$ blocks appearing in $\bigstar$, and that this shifting is uniquely
 determined by the location of the $P_{j}$ in the switching region.  In fact,
 we will show that this ``switching", though exhibited in a purely matrix
 setting, corresponds exactly to the row-switching algorithm appearing in \cite{kerber}
 and generalized in \cite{Sottile}.

 To obtain this, we first note that, by the ordering lemma, the filling of $\nu_1, \dots, \
 \nu_{\sigma}$ over $\mu$ is unchanged by decreasing the value of
 $\nu_{\sigma + 1}$.  That is, if we replaced $(D_{\mu}, L diag(1,
 \dots, 1, t^{\nu_{\sigma + 1}}, t^{\nu_{\sigma}}, \dots ,
 t^{\nu_1})$ with $(D_{\mu}, L \, diag(1,
 \dots, 1, t^{s}, t^{\nu_{\sigma}}, \dots ,
 t^{\nu_1})$, for some $s$ with $0 < s < \nu_{\sigma + 1}$, the
 parts of the filling of $\nu_1, \dots, \nu_{\sigma}$ remain unchanged.  Consequently, we
 can reduce the value of $\nu_{\sigma + 1}$ (temporarily) so that only the initial
block $P_{1}$ appears in the $\sigma + 1$ strip, and
 re-draw the switching region in $\bigstar$:

\begin{pspicture}(14,7)
\newgray{llgray}{0.92}
\psframe[fillstyle=solid,fillcolor=gray](5,0)(7,1)
\rput{*0}(6,0.5){$P_1$}
\psframe[fillstyle=solid,fillcolor=lightgray](2,0)(5,1)
\rput{*0}(3.5,0.5){$Q_1$}
\psframe[fillstyle=solid,fillcolor=lightgray](7,1)(9,2)
\rput{*0}(8,1.5){$Q_2$}
\psframe[fillstyle=solid,fillcolor=lightgray](9,1)(10,2)
\rput{*0}(9.5,1.5){$Q_3$}
\psframe[fillstyle=solid,fillcolor=lightgray](10,2)(12,3)
\rput{*0}(11,2.5){$Q_4$}
\psframe[fillstyle=solid,fillcolor=lightgray](12,3)(14,4)
\rput{*0}(13,3.5){$Q_5$}
\psframe[fillstyle=solid,fillcolor=lightgray](15,4)(16,5)
\rput{*0}(15.5,4.5){$Q_6$}
\psframe[fillstyle=solid,fillcolor=lightgray](16,4)(17,5)
\rput{*0}(16.5,4.5){$Q_7$}
\psframe[fillstyle=solid,fillcolor=lightgray](17,5)(19,6)
\rput{*0}(18,5.5){$\dots$}
\end{pspicture}

\vspace{0.3in}

In fact, there are two possible cases to consider in this reduction,
depending on the shape of the $\nu_{\sigma+1}$-strip. Case One,
which will look like the case above, where the initial block $P_1$
of the $\nu_{\sigma + 1}$-strip lies directly to the right of some
block $Q_{i}$ of the $\mu_{\ell + 1}$-strip of the mixed picture,
with no block of the $\mu_{\ell + 1}$-strip lying above it. In Case
Two, we might see a picture of the form:

\vspace{0.3in}

\begin{pspicture}(14,6)
\newgray{llgray}{0.92}
\psframe[fillstyle=solid,fillcolor=gray](0,-1)(4,0)
\rput{*0}(1.5,-0.5){$P_1$}
\psframe[fillstyle=solid,fillcolor=lightgray](0,0)(4,1)
\rput{*0}(2,0.5){$Q_1$}
\psframe[fillstyle=solid,fillcolor=lightgray](4,0)(5,1)
\rput{*0}(4.5,0.5){$Q_2$}
\psframe[fillstyle=solid,fillcolor=lightgray](7,1)(9,2)
\rput{*0}(8,1.5){$Q_3$}
\psframe[fillstyle=solid,fillcolor=lightgray](9,1)(10,2)
\rput{*0}(9.5,1.5){$Q_4$}
\psframe[fillstyle=solid,fillcolor=lightgray](10,2)(12,3)
\rput{*0}(11,2.5){$Q_5$}
\psframe[fillstyle=solid,fillcolor=lightgray](12,3)(14,4)
\rput{*0}(13,3.5){$Q_6$}
\psframe[fillstyle=solid,fillcolor=lightgray](15,4)(16,5)
\rput{*0}(15.5,4.5){$Q_7$}
\psframe[fillstyle=solid,fillcolor=lightgray](16,4)(17,5)
\rput{*0}(16.5,4.5){$Q_8$}
\psframe[fillstyle=solid,fillcolor=lightgray](17,5)(19,6)
\rput{*0}(18,5.5){$\dots$}
\end{pspicture}

\vspace{0.3in}

By Corollary~\ref{don't-move}, if we reduce $\mu_{\ell + 1}$ so that
only $Q_{1}$ remains (in either of the cases), then the size and
location of the part labeled $P_{1}$ will not move, either.

{\bf Case 1:}  Applying Corollary~\ref{don't-move} and reducing the
size of the $\mu_{\ell + 1}$-strip we may depict this case with the
picture:

\begin{pspicture}(14,2)
\newgray{llgray}{0.92}
\psframe[fillstyle=solid,fillcolor=gray](5,0)(7,1)
\rput{*0}(6,0.5){$P_1$}
\psframe[fillstyle=solid,fillcolor=lightgray](2,0)(5,1)
\rput{*0}(3.5,0.5){$Q_j$}
\end{pspicture}

\vspace{0.3in}

$P_1$ does {\em not} lie below any block of the $\mu_{\ell
+1}$-strip. Then, all blocks $Q_h$ lying in a row {\em lower} than
the row in which $P_1$ appears must remain in the {\em same}
location in the $\mu_{\ell +1}$-strip of $\bigstar \bigstar$ as they
do in the switching region of $\bigstar$, since this must be a
horizontal strip and, by the ordering lemma, can be filled in only
one way.  The only way that the block $Q_{j}$ can appear in the
$\mu_{\ell + 1}$-strip would be to the right of the block $P_{1}$,
since by the Ordering Lemma, the block $P_{1}$ can only appear in
this row, so for the $\mu_{\ell + 1}$-strip to be a horizontal
strip, the block $Q_{j}$ must appear after this, to the right.

\medskip {\bf Case 2:} $P_1$ lies directly under some $Q_j$. That is, the
shapes look like:

\begin{pspicture}(6,2)
\newgray{llgray}{0.92}
\psframe[fillstyle=solid,fillcolor=gray](0,-1)(4,0)
\rput{*0}(1.5,-0.5){$P_1$}
\psframe[fillstyle=solid,fillcolor=lightgray](0,0)(4,1)
\rput{*0}(2,0.5){$Q_j$}
\end{pspicture}
\vspace{0.3in}

By the Ordering Lemma applied to the growth of the $\mu_{\ell +
1}$-strip in $\bigstar \bigstar$, we can conclude that all blocks
$Q_{h}$, for $1 \leq h < j$, must appear in the same location as
they do in the switching region of $\bigstar$ (by considering the
case when $P_{1}$ is of zero length), since in this case the
switching region is a horizontal strip, so there this no choice for
the growth of the $\mu_{\ell + 1}$-strip. However, we claim that
$Q_j$ must ``switch'' with $P_1$.  That is, the block $Q_j$ in the
$\mu_{\ell +1}$-strip must occupy the space of $P_1$. Why?  By
construction, the stack of blocks $Q_j$ over $P_1$ in the switching
region of $\bigstar$ is the only part of the region that is has some
part of a row over another.  If $Q_j$ does not occupy the space
taken by $Q_{1}$ in the $\mu_{\ell + 1}$-strip of $\bigstar
\bigstar$, the same space that $P_1$ occupies in the switching
region of $\bigstar$, then there is no way that the growth of the
$\mu_{\ell + 1}$-strip can be a horizontal strip, as it must.  We
conclude that in this case, the locations of all blocks $Q_h$ of the
$\mu_{\ell + 1}$-strip, lying above or to the left of the first
block $P_1$ of the $\nu_{\sigma + 1}$-strip in $\bigstar$, are
completely determined by the $P_1$ and the shape of the switching
region.

\medskip

All that remains is to remark that, by increasing the $\nu_{\sigma +
1}$-strip in $\bigstar \bigstar$, block by block, and arguing in the
two cases as above, each successive block $Q_{j}$ of the $\mu_{\ell
+ 1}$-strip will be determined.  The blocks $Q_{j}$ that appear in
rows above the $\nu_{\sigma +1}$-strip in the switching region are
uniquely determined, of course, once all of the $\nu_{\sigma +
1}$-strip has been located, along with the parts of the $\mu_{\ell +
1}$-strip that appear in lower rows.
\end{proof}

\end{document}